\providecommand{\U}[1]{\protect\rule{.1in}{.1in}}
\newcommand{\BlackBoxes}{\global\overfullrule5pt}
\newcommand{\R}{\mathbb{R}}
\newcommand{\N}{\mathbb{N}}
\newcommand{\Eop}{\mathbb{E}}
\newcommand{\Pop}{\mathbb{P}}
\newcommand{\Q}{\mathbb{Q}}
\newcommand{\pr}[1]{#1=(#1_t)_{t\ge0}}
\newcommand{\seq}[1]{(#1_n)_{n\in\N}}
\newcommand{\alphabar}{{\bar{\alpha}}}
\newcommand{\mPP}[1]{(N^{1},\ldots,N^{#1})}
\newcommand{\mPPstpr}[1]{(N_t^{1},\ldots,N_t^{#1})_{t\ge0}}
\newcommand{\mPPpr}[1]{\mPP{#1}=\mPPstpr{#1}}
\newcommand{\nsum}{\sum_{n\in\N}}
\newcommand{\indicator}{{\mathds 1}}
\newcommand{\Ind}[1]{\indicator_{\{#1\}}}
\newcommand{\ind}[1]{\indicator_{#1}}
\newcommand*{\defeq}{\mathrel{\mathop{\raisebox{0.55pt}{\small$:$}}}=}
\newcommand{\PD}{{\cP(\D)}}
\newcommand{\cP}{{\mathcal P}}
\newcommand{\D}{\mathbb{D}}
\newcommand{\fF}{\mathfrak{F}}
\newcommand{\stpr}[1]{(#1_t)_{t\ge0}}
\newcommand{\cF}{{\mathcal F}}
\newcommand{\dif}{\textup{d}}
\newcommand{\TaS}{(N^1,\ldots,N^d)}
\newcommand{\idsum}{\sum_{i=1}^d}
\newcommand{\tseq}[3]{(#1_n,(#2_n,#3_n))_{n\in\N}}
\newcommand{\intEd}{\int_{E^d}}
\newcommand{\dsyz}{\dif s,\dif (y,z)}
\newcommand{\fG}{{\mathfrak G}}
\newcommand{\cG}{{\mathcal G}}
\newcommand{\Pas}{\P\text{-a.s.}}
\renewcommand{\P}{\mathbb{P}}
\newcommand{\prT}[1]{#1=(#1_t)_{t\in[0,T]}}
\newcommand{\Xxib}{X^{\xi,b}}
\newcommand{\xib}{(\xi,b)}
\newcommand{\UT}{\cU[0,T]}
\newcommand{\cU}{{\mathcal U}}
\newcommand{\dtyz}{\dif t,\dif (y,z)}
\newcommand{\UtT}{\cU[t,T]}
\newcommand{\Etx}{\Eop^{t,x}}
\newcommand{\intdalt}{\int_{(0,\infty)^d}}
\newcommand{\pbeta}{f_{\bar{\beta}}}
\newcommand{\DD}{{D\subset\D}}
\newcommand{\Dseq}[1]{(#1_D)_{D\subset\D}}
\newcommand{\stpralt}[1]{(#1(t))_{t\ge0}}
\newcommand{\DBsum}{\sum_{D\in B}}
\newcommand{\fracbetaqt}{\frac{\beta_D+q_D(t)}{\normbetaqt}}
\newcommand{\PPD}{{\cP(\cP(\D))}}
\newcommand{\normbetaqt}{\|\bar{\beta}+q_t\|}
\newcommand{\Dsum}{\sum_{D\subset\D}}
\newcommand{\domainVDir}{[0,T]\times\R\times\Delta_m\times\N_0^\ell}
\newcommand{\fracbetaq}{\frac{\beta_D+q_D}{\normbetaq}}
\newcommand{\normbetaq}{\|\bar{\beta}+q\|}
\newcommand{\domaingDir}{[0,T]\times\Delta_m\times\N_0^\ell}
\newcommand{\Esum}{\sum_{E\subset\D}}
\newcommand{\Eprod}{\prod_{E\subset\D}}
\newcommand{\EDDprod}{\prod_{E\subset\D\setminus\{D\}}}
\newcommand{\prTalt}[1]{#1=(#1(t))_{t\in[0,T]}}
\newcommand{\partialvar}[1]{\frac{\partial}{\partial #1}}
\newcommand{\dprtTalt}[2]{(#1,#2)=(#1(s),#2(s))_{s\in[t,T]}}
\newcommand{\stprT}[1]{(#1_t)_{t\in[0,T]}}
\newcommand{\stT}{{s\in[t,T]}}
\newcommand{\stprtT}[1]{(#1_s)_{s\in[t,T]}}
\newcommand{\Xxibstar}{X^{\xi^\star,b^\star}}
\newcommand{\fracbetaCt}{\frac{\beta_D+q_D(t)}{\normbetaqt}}
\newcommand{\fracbetaqs}{\frac{\beta_D+q_D(s)}{\normbetaqs}}
\newcommand{\fracbetaCs}{\frac{\beta_D+q_D(s)}{\normbetaqs}}
\newcommand{\normbetaqs}{\|\bar{\beta}+q_s\|}
\newcommand{\etaxibhat}{\hat{\eta}^{\xi,b}}
\newcommand{\etaxibbar}{\bar{\eta}^{\xi,b}}
\newcommand{\etaxibtilde}{\tilde{\eta}^{\xi,b}}
\newcommand{\qtilde}{{\tilde q}}
\newcommand{\fracbetaD}{\frac{\beta_D}{\|\bar{\beta}\|}}
\newtheorem{theorem}{Theorem}
\newtheorem{corollary}[theorem]{Corollary}
\newtheorem{lemma}[theorem]{Lemma}
\newtheorem{proposition}[theorem]{Proposition}
\theoremstyle{definition}
\newtheorem{definition}[theorem]{Definition}
\numberwithin{equation}{section} \numberwithin{theorem}{section}
\def\0{\kern0pt\-\nobreak\hskip0pt\relax}
\def\overbar#1{\skewbar{#1}{-1}{-1}{.25}}
\def\skewbar#1#2#3#4{{\overbarpalette\makeoverbar{#1}{#2}{#3}{#4}}#1}
\def\overbarpalette#1#2#3#4#5{\mathchoice
{#1\textfont\displaystyle{#2}1{#3}{#4}{#5}}
{#1\textfont\textstyle{#2}1{#3}{#4}{#5}}
{#1\scriptfont\scriptstyle{#2}{.7}{#3}{#4}{#5}}
{#1\scriptscriptfont\scriptscriptstyle{#2}{.5}{#3}{#4}{#5}}}
\def\makeoverbar#1#2#3#4#5#6#7{ \setbox0=\hbox{$\m@th#2\mkern#5mu{{}#3{}}\mkern#6mu$} \setbox1=\null \dimen@=#4\fontdimen8#13 \dimen@=3.5\dimen@
\advance\dimen@ by \ht0 \dimen@=-#7\dimen@ \advance\dimen@ by \wd0
\ht1=\ht0 \dp1=\dp0 \wd1=\dimen@
\dimen@=\fontdimen8#13 \fontdimen8#13=#4\fontdimen8#13
\rlap{\hbox to \wd0{$\m@th\hss#2{\overline{\box1}}\mkern#5mu$}}
\fontdimen8#13=\dimen@}
\def\mylabel#1#2{{\def\@currentlabel{#2}\label{#1}}}
\newcommand{\Fbar}{\overbar{F}}
\begin{document}
\title[Robust Optimal Investment and Reinsurance Problems with Learning]{Robust Optimal Investment and Reinsurance Problems with Learning }
\author[N. \smash{B\"auerle}]{Nicole B\"auerle${}^*$}
\address[N. B\"auerle]{Department of Mathematics,
Karlsruhe Institute of Technology (KIT), D-76128 Karlsruhe, Germany}

\email{\href{mailto:nicole.baeuerle@kit.edu}
{nicole.baeuerle@kit.edu}}

\author[G. \smash{Leimcke}]{Gregor Leimcke${}^*$}
\address[G. Leimcke]{Department of Mathematics,
Karlsruhe Institute of Technology (KIT), D-76128 Karlsruhe, Germany}

\email{\href{gregor.leimcke@kit.edu} {gregor.leimcke@kit.edu}}

\thanks{${}^*$ Department of Mathematics,
Karlsruhe Institute of Technology (KIT), D-76128 Karlsruhe, Germany}
%\thanks{${}^\dagger$ Department of Mathematics, Karlsruhe Institute of Technology (KIT), D-76128 Karlsruhe, Germany}
\begin{abstract}
In this paper we consider an optimal investment and reinsurance problem with partially unknown model parameters which are allowed to be learned. The model includes multiple business lines and dependence between them. The aim is to maximize the expected exponential utility of terminal wealth which is shown to imply a robust approach. We can solve this problem using a generalized HJB equation where derivatives are replaced by generalized Clarke gradients. The optimal investment strategy can be determined explicitly and the optimal reinsurance strategy is given in terms of the solution of an equation. Since this equation is hard to solve, we derive bounds for the optimal reinsurance strategy via comparison arguments.

\end{abstract}
\maketitle

%\listoftodos

\makeatletter \providecommand\@dotsep{5} \makeatother
%\listoftodos[Changes in Orange/Red To Do List in Green / Blue]\relax

%\address[N. B\"auerle]{Department of Mathematics,
%Karlsruhe Institute of Technology, D-76128 Karlsruhe, Germany}

%\email{\href{mailto:nicole.baeuerle@kit.edu}{nicole.baeuerle@kit.edu}}

\vspace{0.5cm}
\begin{minipage}{14cm}
{\small
\begin{description}
\item[\rm \textsc{ Key words} ]
{\small Risk Theory, Stochastic Control, Filter, Robust Approach}
%\item[\rm \textsc{AMS subject classifications}]
%{\small  ..... }
\end{description}
}
\end{minipage}

\section{Introduction}
The insurance industry is currently facing a variety of challenges. On the one hand, the number and amount of insurance losses are growing caused by an increasing frequency of weather extremes due to climate change (see \cite{MunichRe2019}). On the other hand, the current structural low interest rate environment and higher volatility on the financial markets make it more difficult to achieve profitable investments. These challenges call for effective strategies to reduce insurance risk and to optimize capital investments and have  attracted interest from researches in actuarial mathematics for many years. In fact, a classical task in risk theory is to deal with optimal risk control and optimal asset allocation for an insurance company. Such problems have been intensively studied in literature using various optimization criteria, where maximizing the utility and minimizing the probability of ruin are the two main optimization criteria (see e.g.\ \cite{Schmidli2002} and references given there).%, \cite{PromislowYoung2005} and \cite{CaoWan2009}).
 
%Papers using a Black-Scholes-type financial market and a proportional reinsurance (as it will be used in this work) are  \cite{Schmidli2002}, \cite{PromislowYoung2005} and \cite{CaoWan2009}. While the first two articles provide optimal investment and reinsurance strategies (closed-form and analytical expression for the reinsurance strategy, respectively) under the criteria of minimizing the ruin probability, the third paper offers an explicit expression for the problem of maximizing the exponential utility of terminal wealth. 

However, in most articles, the assumption of full information is used as a common feature, which means that the insurer has complete knowledge of the model. However, in reality, insurance companies operate in a setting with partial information. That is, with regard to the net claim process, only the claim arrival times and magnitudes are directly observable; the claim intensity, which is required by all net claim models, is not observable by the insurer as pointed out in \cite[Ch.\,2]{Grandell1991}. Therefore we study the optimal investment and reinsurance problem in a partial information framework. More precisely we consider a Bayesian approach which means that we allow for learning unknown model parameters. On the other hand we use an exponential utility function as optimization criterion which can be interpreted as a robust approach. 

There are quite a number of papers on robust decision making in actuarial sciences, in particular for optimal reinsurance and investment, see e.g.\ \cite{ZS09,ZZS16,GVY17,GVY18} among others. But all the approaches so far consider a classical optimization problem like utility maximization under alternative models given in form of different probability measures. In this paper indeed we explain that the exponential utility can be interpreted as a robust control approach, thus avoiding a second complicated optimization. 

A paper with partial information is e.g.\ \cite{LiangBayraktar2014}. Based on the suggestion in \cite[p.\,165]{AsmussenAlbrecher2010}, the authors there consider the optimal investment and reinsurance problem for maximizing exponential utility under the assumption that the claim intensity and loss distribution depend on the states of a non-observable Markov chain (hidden Markov chain), which describes different states of the environment, whereby the net claim process is modelled as compound Poisson process and the fully observable financial market is modelled as Black-Scholes financial market with one risky and one risk-free asset. 

%In this thesis, we use the same financial market model and our assumption on the claim intensity can be considered as the special case of one state of the above mentioned Markovian regime-switching model; namely, we model the intensity as an unobservable random variable, which puts us in a Bayesian setting.
However, the sparse literature with  partial information focuses on just one line of business to gain an optimal reinsurance strategy. But in reality there is often a dependence between different risk processes of an insurance company. This results from the fact that the customers of a typical insurance company have insurance policies of different types such as building, private liability or health insurance contracts.
A simplified example of a possible dependence between several types of risk is that of a storm event accompanied by heavy rainfall where flying roof tiles cause damages to third parties and flooding leads to building damages. 
%In addition to this dependence between private liability insurance and building insurance, there may even be a dependence on motor liability insurance and health insurance if a car accident occurs caused by adverse traffic circumstances due to that heavy rainfall. 
Therefore, to model the insurance risks of an insurer appropriately, we need to capture the dependence structure using a multivariate model.
 
A commonly used approach to impose dependence between several types of insurance risks is accomplished by thinning, which is also the case in this paper. 
The idea of this approach is that the occurrence of claims depends on a certain process which generates events that cause damages of the line of business $i$ with probability $p_i$ and of the line of business $j$ with probability $p_j$, where all caused claims occur simultaneously at the trigger arrival time.
Therefore these models are referred to as \emph{common shock risk models}.  An example of a shock event is the above-described storm event. Typically the corresponding claim sizes are determined independently of the appearance times (see e.g. \cite{BaeuerleGruebel2005}). 

Another multivariate model that avoids a reference to an external mechanism is given in \cite{BaeuerleGruebel2008}, where the authors propose a multivariate continuous Markov chain of pure birth type with interdependency arising from dependences of the birth rates on the number of claims in other component processes.
In \cite{SchererSelch2018}, the dependence of the marginal processes of a multiple claim arrival process is constructed  by introducing a L\'evy subordinator serving as a joint stochastic clock, which leads to a multivariate Cox process in the sense that the marginal processes are univariate Cox processes. 
In connection with optimal reinsurance problems, a L\'evy approach is discussed in \cite{bb11}.
There, the authors have shown that a constant investment and reinsurance strategy (proportional reinsurance as well as the mixture of proportional and excess-of-loss reinsurance) is the optimal strategy for maximizing the exponential utility of terminal wealth.

In addition to the L\'evy model, optimization problems with common shock models have been investigated in \cite{Centeno2005}, where  optimal excess-of-loss retention limits are studied for a bivariate compound Poisson risk model in a static setting. The corresponding dynamic model was used in \cite{BaiCaiZhou2013} to derive optimal excess-of-loss reinsurance policies (which turns out to be constant) under the criterion of minimizing the ruin probability making use of a diffusion approximation.
For the same model, \cite{LiangYuen2016} have derived a closed-form expression for the optimal proportional reinsurance strategy of the exponential utility maximizing problem both with and without diffusion approximation by using the variance premium principle. 
In the presence of a Black-Scholes financial market, the same problem has been investigated in \cite{BiChen2019} with the expected value premium principle. 
For the case of an insurance company with more than two lines of business, \cite{YuenLiangZhou2015} and \cite{WeiLiangYuen2018} seek optimal proportional reinsurance to maximize the exponential utility of terminal wealth and the adjustment coefficient, respectively, where the strategies are only stated for two classes of business.
However, all optimization problems with multivariate insurance models are considered under full information.

We will describe the dependence structure between different lines of business by the thinning approach while we deal with unobservable thinning probabilities.
To the authors' knowledge, this is the first time that an optimal reinsurance and investment problem under partial information using a multivariate claim arrival model with possibly dependent marginal processes is studied.
To solve the optimal control problem, the dynamic programming approach will be applied. However since the value function may not be differentiable, a generalization using the Clarke gradient will be applied (this idea has been used before in \cite{BaeuerleRieder2007}, \cite{LiangBayraktar2014}). 

The outline of our paper is as follows: We introduce the partial information problem under the assumptions of observable claim size distribution, unobservable background intensity taking values in a finite set and Dirichlet distributed thinning probabilities in Section \ref{sec:model}. We also explain the robust approach which is inherit in the criterion of maximizing exponential utility. Using a filter as an estimator for the background intensity and the conjugated property of the Dirichlet distribution, we proceed in Section \ref{sec:learn} by stating the reduced control problem.   The corresponding generalized Hamilton Jacobi Bellman (HJB) equation will be introduced in Section \ref{sec:sol}, where we need to replace partial derivatives w.r.t.\ the time and the components of the filter for the background intensity by the corresponding Clarke gradient. 
The HJB equation yields the same optimal investment strategy as in the classical Merton problem and the optimal reinsurance strategy has to be characterized implicitly. Next we prove a verification theorem  and show the existence and optimality of the proposed  strategy. Finally in Section \ref{sec:comp}, we provide a comparison result  under the assumption of identical claim size distributions for all insurance classes, which is visualized by some examples in the last section.

\section{The Optimal Investment and Reinsurance Model}\label{sec:model}
We consider an insurance company with several lines of business. The aim is to maximize the expected utility of the terminal surplus of the considered insurance company by choosing optimal investment and reinsurance strategies.  For the moment we assume that all model data is known.
\subsection{The claim arrival process}
In the following, let $d\in\N$ be the number of business lines of the insurer. The claim arrival model is a  Poisson process $\pr{N}$ with intensity $\lambda$.  We interpret the arrival times of the  Poisson process $N$, denoted by $\seq{T}$, as events which trigger various kinds of insurance claims. The lines of business which are affected by the trigger event at $T_n$ are given by a random variable $Z_n$ with values in $\mathcal{P}(\mathbb{D})$ (power set of  $\mathbb{D}=\{1,\ldots,d\}$). We assume that $(Z_n)_{n\in\N}$ are i.i.d.\ with $\Pop(Z_n=D)=\alpha_D, D\subset \mathbb{D}$ and denote $\bar{\alpha}=(\alpha_D)_{D\subset\mathbb{D}}$. The interdependencies between the lines of business are fully determined by $\alphabar$. We call the components of $\alphabar$ \emph{thinning probabilities} since they thin the trigger arrival times. Moreover, w.l.o.g.\ $\Pop(Z_1=\emptyset)=0$, i.e.\ every shock event leads to at least one insurance damage. Otherwise we could reduce the intensity of $N$. Therefore the \emph{(multivariate) claim arrival process}, denoted by $\mPPpr{d}$, is defined by
\begin{equation*}
N_t^i= \nsum \Ind{T_n\le t} \Ind{i\in Z_n},\quad t\ge 0, \quad i=1,\ldots,d.
\end{equation*}
So $(N^1,\ldots,N^d)$ is a $d$-dimensional counting process, where $N_t^i$ counts the number of claims of the $i$th business line up to time $t$.    
The \emph{claim sizes} are described by a $d$-dimensional sequence $\seq{Y}$ with $Y_n=(Y_n^1,\ldots,Y_n^d)$ of i.i.d. $(0,\infty)^d$-valued random variables with distribution $F$. It is worth to note that the claims sizes from various business lines can be dependent. We assume that $Y_1,Y_2,\ldots$ are independent of the sequences $\seq{T}$ and $\seq{Z}$.
The sum of the claim sizes of all $d$ insurance classes which appear at the arrival times of the multivariate claim arrival process $\TaS$ up to time $t$, denoted by $\pr{S}$, gives the aggregated claim amount process, i.e.\ 
% So the \emph{aggregated claim amount process} or \emph{total claim amount process}, denoted by $\pr{S}$, 
it is given by 
\begin{equation*}
S_t = \idsum \nsum Y^i_n\,\Ind{T_n\le t}\,\Ind{i\in Z_n},\quad t\ge0.
\end{equation*}
From now on, we set $\Psi\defeq\tseq{T}{Y}{Z}$ and let $E := (0,\infty)^d\times\PD$.  That is, $\Psi$ is the $E$-Marked Point Process which contains the information of the claim arrival times, the thinning sequence and the claim sizes. The filtration generated by $\Psi$ is denoted by $\fF^\Psi=(\mathcal{F}_t^\Psi)_{t\ge 0}$ and the intensity measure of $\Psi$ is given by $\nu(t,(A,B))=\lambda  F(A) \sum_{D\in B}\alpha_D. $
Using the introduced Marked Point Process $\Psi$, we can write
\begin{equation}\label{eq:aggclaim}
S_t = \int_0^t \int_E \idsum y_i\,\ind{z}(i)\,\Psi(\dsyz),\quad t\ge0.
\end{equation}
It should be noted that the aggregated claim amount process $S$ is observable for the insurance company and thus the natural filtration of $\Psi$, denoted by  $\fF^\Psi$, is known by the insurer. We can interpret $S$ given by~\eqref{eq:aggclaim} as the aggregated claim amount process of a heterogeneous insurance portfolio where the random elements $Z_n$ yield the information of which type  the claim size distribution  of the claim at time $T_n$ is.
Finally we need the following assumption on the integrability of the claim size distribution:
\begin{equation}\label{eqDir:F}
 M_F(z) := \Eop \bigg[\exp\bigg\{z\idsum Y_1^i\bigg\}\bigg]  <\infty,\quad z\in\R.
\end{equation}

\subsection{The financial market} 
The surplus will be invested by the insurer into a financial market, which will be modelled as a classical Black-Scholes market. 
So it is supposed that there exists one risk-free asset and one risky asset. The price process of the \emph{risk-free asset}, denoted by $\pr{B}$, is given by
\begin{equation*}
d B_t = rB_t dt, \quad B_0=1,
\end{equation*}
where $r\in\R$ denotes the \emph{risk-free interest rate}. That is, $B_t = e^{rt}$ for all $t\ge0$.
The price process of the \emph{risky asset}, denoted by  $\pr{P}$, is given by
\begin{equation*}
d P_t = \mu P_t dt + \sigma P_t d W_t, \quad P_0=1,
\end{equation*}
where $\mu\in\R$ and $\sigma>0$ are constants describing the drift and volatility of the risky asset, respectively, and $\pr{W}$ is a standard Brownian motion.  We assume that the Brownian motion $W$ is independent of $\seq{T}$, $\seq{Y}$ and $\seq{Z}$. We denote by $\stpr{\cF^W}$ the augmented Brownian filtration of $W$.     Throughout this work, $\fG=\stpr{\cG}$ denotes the observable filtration of the insurer which is given by
\begin{equation*}
\cG_t = \cF^W_t\vee\cF_t^\Psi,\quad t\ge0.
\end{equation*}
\subsection{The strategies}
We assume that the wealth of the insurance company is invested into the previously described financial market.
%The insurer can choose the amount of its surplus that is invested at time $t$ into the risky asset $P$, where we also allow short-sell by the insurer which is represented by a negative volume put into the risky asset. In addition, the insurance company is permit to lend and borrow, respectively, an infinitesimal amount of money.
\begin{definition}\label{def:investment}
	An \emph{investment strategy}, denoted by $\pr{\xi}$, is an $\R$-valued, bounded, c\`{a}dl\`{a}g and $\fG$-progressively measurable process.% with %$\int_0^t |\xi_s|^2\ds < \infty$ $\Pas$ for all $t\ge0$. %\marginnote{Its formulated for $I\subset[0,\infty)$ since later an investment strategy will be consider on $[t,T]$, see \eqref{eq:Atpdis}.}
%	\begin{equation}\label{eq:invstrcond}
%	\int_0^t |\xi_s|^2 ds < \infty \quad \Pas\quad \text{for all }t\ge0.
%	\end{equation}
\end{definition}
Note that for simplicity we assume here bounded strategies, i.e.\ $|\xi_t|\le K,$ for $K>0$. We will later see that this is no restriction. Further, we assume that the first-line insurer has the possibility to take a proportional reinsurance. Therefore, the \emph{part of the insurance claims paid by the insurer}, denoted by $h(b,y)$, satisfies
\begin{equation*}
h(b,y) = b\cdot y
\end{equation*}
with \emph{retention level} $b\in[0,1]$ and \emph{insurance claim} $y\in(0,\infty)$. Here we suppose that the insurer is allowed to reinsure a fraction of her/his claims with retention level $b_t\in[0,1]$ at every time $t$. 

\begin{definition}\label{def:reinsurance}
	A \emph{reinsurance strategy}, denoted by $\pr{b}$, is a $[0,1]$-valued, c\`{a}dl\`{a}g and $\fG$-predictable process. 
\end{definition}
We denote by $\UtT$ the set of all admissible strategies $(\xi,b)$ on $[t,T]$. 
%For simplicity we consider bounded strategies only. We will later see that this is no restriction.
%Of course, sharing risk by ceding proportions of claims to a reinsurer reduces the premium income of the first-line insurer. To discuss this reduction in detail, w
We assume that the policyholder's payments to the insurance company are modelled by a fixed \emph{premium (income) rate} $c=(1+\eta)\kappa$ with safety loading $\eta>0$ and fixed constant $\kappa>0$, which means that premiums are calculated by the expected value principle.
If the insurer chooses retention levels less than one, then the insurer has to pay premiums to the reinsurer. The \emph{part of the premium rate left to the insurance company} at retention level $b\in[0,1]$, denoted by $c(b)$, is $c(b) = c - \delta(b)$, where $\delta(b)$ denotes the \emph{reinsurance premium rate}. We say $c(b)$ is the \emph{net income rate}.
% In the case of no reinsurance (retention level of 1), the et income rate is $c(1)=c$. 
Moreover, the net income rate $c(b)$ should increase in $b$, which is fulfilled by setting $\delta(b) \defeq  (1-b)(1+\theta)\kappa$ with $\theta>\eta$ which represents the safety loading of the reinsurer.
Therefore
\begin{equation}\label{eq:cb}
c(b) = (1+\eta)\kappa - (1-b)(1+\theta)\kappa = (\eta-\theta)\kappa + (1+\theta)\kappa\,b,
\end{equation}
where $\eta-\theta<0$.
This reinsurance premium model is used e.g.\ in \cite{ZhuShi2019}. 
The surplus process $\prT{\Xxib}$ under an admissible investment-reinsurance strategy $\xib\in\UT$ is given by
\begin{align*}
d \Xxib_t
&=(\Xxib_t - \xi_t)r dt + \xi_t(\mu dt+\sigma dW_t) + c(b_t)dt - b_t dS_t  \\
&= \left(r\Xxib_t + (\mu - r)\xi_t + c(b_t)\right) dt + \xi_t\sigma dW_t - b_{t}dS_t.
\end{align*}
We suppose that $\Xxib_0=x >0$  is the initial capital of the insurance company. An alternative representation of the surplus process with the help of the random measure will be  useful. The dynamics of the surplus can for $t\ge 0$  be written as
\begin{equation}\label{eq:surplus}
\begin{aligned}
\dif \Xxib_t &= \left(r X_t^{\xi,b} + (\mu-r)\xi_t + c(b_t)\right)\dif t+ \xi_t\sigma\dif W_t  - \int_{E} b_t\idsum y_i\ind{z}(i)\,\Psi(\dtyz).
\end{aligned}
\end{equation}

\subsection{The optimization problem}
Clearly, the insurance company is interested in an optimal investment-reinsurance strategy. But there are various optimality criteria to specify optimization of proportional reinsurance and investment strategies.  We consider the expected utility of wealth at the terminal time $T$ as criterion with
%So we assume that the insurer has an 
exponential utility function $U:\R\to\R$ 
\begin{equation}\label{eq:u}
U(x)=-e^{-\alpha x},
\end{equation}
where the parameter $\alpha>0$ measures the \emph{degree of risk aversion}. The choice of this criterion will be justified below.
Next, we are going to formulate the dynamic optimization problem. 
We define the value functions, for any $(t,x)\in[0,T]\times\R$ and $(\xi,b)\in\UtT$, by
\begin{equation}\label{eq:problem} %\tag{P}
\begin{aligned}
V^{\xi,b}(t,x) &\defeq \Etx\big[U(\Xxib_T)\big],  \\
{V}(t,x) &\defeq \sup_{(\xi,b)\in\UtT}V^{\xi,b}(t,x),
\end{aligned}
\end{equation}
where the expectation $\Etx$ is taken w.r.t.\ the conditional probability measure $\P^{t,x}$ where $\Xxib_t=x$ is given (when $t=0$ we simply write $\Eop^x$). This optimization criterion has an interesting interpretation. Instead of $V^{\xi,b}(0,x)$ we can equivalently maximize $-\frac1\alpha \log \Eop^x\big[e^{-\alpha \Xxib_T}\big]$ which is the entropic risk measure of terminal wealth. For small $\alpha$ this is approximately equal to  (see e.g. \cite{bp}, \cite{br19})
$$-\frac1\alpha \log \Eop^x\big[e^{-\alpha \Xxib_T}\big] \approx  \Eop^x\big[ \Xxib_T\big]-\frac12 \alpha Var^x(\Xxib_T). $$ 
Thus, for small $\alpha>0$ we maximize the expectation penalized by the variance. This is a risk-sensitive criterion on one hand, but can also be seen as the Lagrange-function of a mean-variance problem. Another interesting feature of this criterion is that it has a dual representation as
$$ -\frac1\alpha \log \Eop^x\big[e^{-\alpha \Xxib_T}\big] = \inf_{\mathbb{Q}\ll \Pop^x} \Big( \Eop^{\mathbb{Q}}\big[ \Xxib_T\big] +\frac1\alpha I(\mathbb{Q} \| \Pop^x) \Big)$$  for r.v.\ $\Xxib_T$ which are bounded from above with
$$ I(\mu\| \nu) := \left\{ \begin{array}{cc}
	\int\ln(\frac{d \mu}{d \nu})d \mu, & \mbox{ if } \mu\ll \nu,\\
	\infty, & \mbox{ otherwise},
\end{array}  \right. $$  for the relative entropy function or Kullback-Leibler distance $I$ between two probability measures  $\mu,\nu$ (see e.g. \cite{DPMR}).  From this representation we see that the  case $\alpha\uparrow \infty$ corresponds to the case of a robust optimization problem or worst-case optimization problem where the insurer maximizes the surplus if nature chooses the least favourable measure for the model. For $\alpha>0$ this means that potentially a whole range of beliefs about $\Pop^x$ is considered but deviations from the baseline model $\Pop^x$ are penalized. In some cases this yields an alternative method to solve the optimization problem. Let us for example consider the classical (one-dimensional) Cram\'er-Lundberg model with reinsurance which is a special case of our model. The surplus process is given by $$ dX_t = c(b_t)dt - \int_{(0,\infty)} b_t y \Psi(dt,dy).$$ It is well-known that the worst-case probability measure in this representation is also equivalent to $\Pop^x$. So we can restrict  our search of the worst-case measure to measures with a density of the form (see e.g. \cite{J75} \cite{sk75}),  
\begin{equation}
 \frac{d \Q}{d \Pop^x}(\omega) = \exp\Big(\int_0^t \int_{(0,\infty)} \ln\bigg(\frac{g(\omega,s,dy)}{\lambda F(dy)}\bigg)\Psi(\omega,ds,dy) 
- \int_0^t \int_{(0,\infty)} [g(\omega,s,dy)-\lambda F(dy)] ds\Big),
\end{equation}
where $$ \int_0^t \int_B b_s y \Psi(\omega,ds,dy) - \int_0^t b_s y g(\omega,s,B)ds $$ is a $\fG$-martingale under $\Q$. Thus, we can parametrize $\Q$ by the random intensity measure $g$.
Hence
\begin{eqnarray}\nonumber
 &&\Eop^{\mathbb{Q}}\big[ \Xxib_T\big] +\frac1\alpha I(\mathbb{Q} \| \Pop^x) \\ \nonumber&=& x+\Eop^\Q\Big[\int_0^t c(b_s) ds - \int_0^t  \int_{(0,\infty)} b_s y \Psi(ds,dy)  + \frac{1}{\alpha} \int_0^t \int_{(0,\infty)} \ln\bigg(\frac{g(s,dy)}{\lambda F(dy)}\bigg)\Psi(ds,dy)\\\nonumber&& -\frac{1}{\alpha} \int_0^t \int_{(0,\infty)} [g(s,dy)-\lambda F(dy)] ds\Big]\\ \nonumber
 &=&  x+\Eop^\Q\Big[\int_0^t c(b_s) ds - \Big( \int_0^t  \int_{(0,\infty)} b_s y g(s,dy) - \frac{1}{\alpha}  \ln \bigg(\frac{g(s,dy)}{\lambda F(dy)}\bigg)g(s,dy)\\
 && +\frac{1}{\alpha}  g(s,dy)-\frac{\lambda}{\alpha} F(dy) ds\Big)\Big]. \label{eq:Qgf}
\end{eqnarray}
Minimizing this expression w.r.t.\ $g$ can be done pointwise and  yields by simple differentiation that the worst case  measure is given by
$$g(\omega,s,dy)= \lambda \exp(\alpha b_s(\omega) y) F(dy).$$ Plugging this expression in \eqref{eq:Qgf} yields
$$  x+\Eop^\Q\bigg[\int_0^t c(b_s) ds - \frac{\lambda}{\alpha} \int_0^t  \int_\R  \exp(\alpha b_s y) F(dy) ds +\frac{\lambda}{\alpha} t \bigg].$$ 
and maximizing for $(b_s)$ which can again be done pointwise finally gives the Euler equation
\begin{equation}\label{eq:Euler}
 c'(b)-\lambda \int y e^{\alpha b} F(dy)=0
 \end{equation} for the first-order condition implying the optimality of $(b_t)_{t\ge0}$. The optimal strategy is here given by $b_s^*=b$ with $b$ solving \eqref{eq:Euler} (see also \cite{bb11}, \cite{LiangBayraktar2014}).
This discussion illustrates that the exponential utility function is really useful on one hand since by choosing $\alpha$ we can interpolate between a risk-sensitive criterion and a robust point of view. Moreover, it is still tractable as we will see in the next sections. However, the robust approach is often too pessimistic and we want to combine it with learning model parameters. That's why we further generalize the model in the next section.

\section{A Model with Learning}\label{sec:learn}
Now we assume that the precise parameters $\lambda$ and $\bar{\alpha}$ of the model are not known. Instead we take a Bayesian approach and suppose that $\lambda$ is a realization of a random variable $\Lambda$ which takes values in a set $\{\lambda_1,\ldots ,\lambda_m\}$ and has initial distribution $\pi_\Lambda(j) = \Pop(\Lambda=\lambda_j), j=1,\ldots,m.$ 
W.l.o.g.\  $0<\lambda_1<\ldots <\lambda_m$. Moreover, we assume that   the initial distribution of $\bar{\alpha}$ is a Dirichlet distribution with parameter $\bar{\beta}=(\beta_D)_{D\subset\mathbb{D}}\in(0,\infty)^\ell$, where $\ell=2^d-1$.
\begin{definition}[Dirichlet distribution; \cite{DeGroot1970}, p.\,49]\label{def:Dir}
	A random vector $X=(X_1,\ldots,X_\ell)$ has a \emph{Dirichlet distribution} with parameter  $\bar{\beta}=(\beta_1,\ldots,\beta_\ell)\in(0,\infty)^\ell$, if the probability density function $\pbeta(\cdot)$  is given by
	\begin{equation*}
	\pbeta(x) = \frac{\Gamma(\beta_1+\ldots+\beta_\ell)}{\Gamma(\beta_1)\cdot\ldots\cdot\Gamma(\beta_\ell)} \prod_{i=1}^\ell x_i^{\beta_i-1},\quad x=(x_1,\ldots,x_\ell)\in\mathring{\Delta}_\ell,
	\end{equation*}
	where $\mathring{\Delta}_\ell$ denotes the interior of $\Delta_\ell :=\big\{x\in\R_+^\ell: \sum_{i=1}^{\ell}x_i=1\big\}$ and $\Gamma$ the gamma function.
   We shortly write $X\sim Dir(\bar{\beta})$.
\end{definition}
Thus, we allow that model parameters are learned by observing the process. In what follows we define 
\begin{equation}\label{eq:qD}
q_D(t) := \sum_{i=1}^{N_t}1_{\{Z_i=D\}}
\end{equation} and $q_t := \big(q_D(t)\big)_\DD$. Thus $\pr{q}$ is an $\N_0^\ell$-valued process and $q_D(t)$ counts the number of realizations of $Z_n$ equal to $D$ up to time $t$. 

The reason for the choice of the Dirichlet distribution as prior is the conjugated property of the Dirichlet prior, which is stated next.

\begin{theorem}[\cite{DeGroot1970},\,Thm\,9.8.1]\label{th:Dirconjugated}
	The posterior distribution of $\alphabar$ given $q_t=c$ with $c=\Dseq{c}\in\N_0^\ell$ is a Dirichlet distribution  with parameter vector $\bar{\beta}+c=(\beta_D+c_D)_\DD$.
\end{theorem}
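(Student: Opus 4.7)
The statement is the classical Dirichlet--multinomial conjugacy, adapted to the current marked point process setting, so the plan is essentially to reduce to Bayes' rule for a multinomial sample and recognise the resulting posterior density.

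First, I would disentangle the information contained in $q_t$. By definition \eqref{eq:qD}, $\sum_{D\subset\mathbb{D}}q_D(t)=N_t$, and conditional on $N_t=n$ the vector $(Z_1,\ldots,Z_n)$ is i.i.d.\ with $\P(Z_i=D\mid\bar\alpha)=\alpha_D$. Hence, for $c=(c_D)_{D\subset\mathbb{D}}\in\N_0^{\ell}$ with $|c|:=\sum_Dc_D=n$,
\begin{equation*}
\P\bigl(q_t=c\,\bigm|\,\bar\alpha,\Lambda,\,N_t=n\bigr)=\frac{n!}{\prod_{D\subset\mathbb{D}}c_D!}\prod_{D\subset\mathbb{D}}\alpha_D^{c_D},
\end{equation*}
i.e.\ a multinomial distribution. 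Next, since the arrival intensity $\Lambda$ governs $N_t$ independently of the thinning mechanism $\bar\alpha$, and since $\bar\alpha$ and $\Lambda$ are assumed independent a priori, the factor $\P(N_t=n\mid\Lambda)$ does not depend on $\bar\alpha$. Consequently
\begin{equation*}
\P\bigl(q_t=c\,\bigm|\,\bar\alpha\bigr)=K(c)\prod_{D\subset\mathbb{D}}\alpha_D^{c_D},
\end{equation*}
where $K(c)$ depends on $c$ (and on the law of $\Lambda$) but not on $\bar\alpha$.

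Now I would apply Bayes' rule. The prior density of $\bar\alpha$ is $\pbeta$ from Definition \ref{def:Dir}, so the posterior density on $\mathring{\Delta}_{\ell}$ is
\begin{equation*}
f_{\bar\alpha\mid q_t}(x\mid c)=\frac{\P(q_t=c\mid\bar\alpha=x)\,\pbeta(x)}{\int_{\mathring{\Delta}_{\ell}}\P(q_t=c\mid\bar\alpha=y)\,\pbeta(y)\,\dif y}\propto\prod_{D\subset\mathbb{D}}x_D^{\beta_D+c_D-1},
\end{equation*}
where proportionality is in $x\in\mathring{\Delta}_{\ell}$ and everything independent of $x$ (including $K(c)$ and the constant from $\pbeta$) has been absorbed. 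The right-hand side is precisely the kernel of a Dirichlet density with parameter $\bar\beta+c=(\beta_D+c_D)_{D\subset\mathbb{D}}$; normalising via the Dirichlet integral $\int_{\mathring{\Delta}_{\ell}}\prod_Dx_D^{\beta_D+c_D-1}\dif x=\prod_D\Gamma(\beta_D+c_D)/\Gamma(|\bar\beta+c|)$ identifies the posterior as $Dir(\bar\beta+c)$.

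The only subtlety I expect is the bookkeeping of which objects are observable and which randomness is being conditioned on: one must explicitly note that $\Lambda$ and $\bar\alpha$ are a priori independent and that the Poisson clock $N$ carries no information about $\bar\alpha$ beyond its total count (which itself is captured by $|c|$), so that the likelihood factor truly reduces to the multinomial expression above. Once this is in place, the rest is algebraic recognition of the Dirichlet kernel.
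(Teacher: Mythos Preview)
Your argument is correct and is precisely the standard Dirichlet--multinomial conjugacy computation. The paper, however, does not supply its own proof of this theorem: it is quoted directly from DeGroot (Theorem~9.8.1) and stated without argument, so there is nothing to compare against. Your write-up would be the natural way to fill in the details; the subtlety you flag---that $\Lambda$ and $\bar\alpha$ must be independent a priori so that the factor $\P(N_t=|c|\mid\Lambda)$ drops out and the likelihood of $q_t$ given $\bar\alpha$ is proportional to the multinomial kernel $\prod_D\alpha_D^{c_D}$---is indeed the only point requiring care when transporting the textbook result into the present marked point process setting, and the paper treats this independence as implicit in the separate specification of the two priors.
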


It should also be noted that the marginal distribution of the $j$th component of a Dirichlet-distributed random vector $(X_1,\ldots,X_\ell)$ with parameter $\bar{\beta}\in(0,\infty)^\ell$, is Beta distributed with parameters $\beta_j$ and $\sum_{i=1}^\ell\beta_i - \beta_j$, compare \cite[p.\,50]{DeGroot1970}. In particular $\Eop X_j=\frac{\beta_j}{\sum_{i=1}^{\ell} \beta_i}$. This fact implies immediately the following result.

\begin{corollary}\label{co:Dirconjugated}
	The posterior distribution of $\alpha_D$ given $q_t=c$ with $c=\Dseq{c}\in\N_0^\ell$ is a Beta distribution with parameters $\beta_D+c_D$ and $\sum_{E\subset \mathbb{D}, E\neq D}(\beta_{E}+c_{E})$. 
\end{corollary}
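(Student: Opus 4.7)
The plan is to reduce this immediately to Theorem~\ref{th:Dirconjugated} combined with the marginal property of the Dirichlet distribution cited just before the corollary. Since the corollary is phrased as a statement about a single marginal $\alpha_D$, there is no need to redo any Bayesian computation from scratch.

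First, I would invoke Theorem~\ref{th:Dirconjugated} to assert that the posterior distribution of the full vector $\bar{\alpha} = (\alpha_D)_{D\subset\mathbb{D}}$ given $q_t = c$ is $Dir(\bar{\beta} + c)$, where the parameter vector has components $\beta_D + c_D$ for $D \subset \mathbb{D}$. Next I would appeal to the standard fact (reference \cite[p.\,50]{DeGroot1970} cited right above the corollary) that if $(X_1,\ldots,X_\ell) \sim Dir(\gamma_1,\ldots,\gamma_\ell)$, then each marginal $X_j$ is Beta distributed with parameters $\gamma_j$ and $\sum_{i \neq j} \gamma_i$. Applying this marginalization to the posterior $Dir(\bar{\beta}+c)$, taking the component indexed by $D$, immediately gives that the conditional law of $\alpha_D$ given $q_t = c$ is Beta with parameters
\begin{equation*}
\beta_D + c_D \quad \text{and} \quad \sum_{E\subset\mathbb{D},\,E\neq D}(\beta_E + c_E),
\end{equation*}
which is exactly the assertion of the corollary.

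No obstacle is expected: the corollary is genuinely a one-line consequence of Theorem~\ref{th:Dirconjugated} and a textbook property of Dirichlet marginals, and both ingredients are already stated and referenced in the surrounding text. The only thing one might wish to verify is the innocuous bookkeeping that the index set $\{D : D \subset \mathbb{D},\ D \neq \emptyset\}$ has cardinality $\ell = 2^d - 1$, matching the dimension in Definition~\ref{def:Dir} used for both Theorem~\ref{th:Dirconjugated} and the marginal formula.
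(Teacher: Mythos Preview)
Your proposal is correct and matches the paper's own reasoning exactly: the paper states the corollary as an immediate consequence (``This fact implies immediately the following result'') of Theorem~\ref{th:Dirconjugated} together with the Dirichlet marginal property cited from \cite[p.\,50]{DeGroot1970}. There is nothing to add.
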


\subsection{Filtering}
Since the strategies have to be $\fG$-predictable and  $\fG$-progressively measurable respectively, the task is to reduce the partially observable control problem~\eqref{eq:problem} within the introduced framework to one with a state process that describes the available information about the unknown background intensity and interdependencies between the line of business. The conditional distribution of $\bar{\alpha}$ can immediately be derived from $(q_t)$ with Theorem \ref{th:Dirconjugated}. For the background intensity we determine a filter process.
 Throughout this paper, we denote by $(\hat{\Lambda}_t)_{t\ge 0}$ the c\`{a}dl\`{a}g modification of the process $(\Eop[\Lambda|\cG_t])_{t\ge0}$
 and we write 
\begin{equation}\label{eq:filterdef}
p_j(t) = \Pop(\Lambda=\lambda_j \mid \cG_t),\quad t\ge0.
\end{equation}
Moreover, we denote by $\pr{p}$ the $m$-dimensional process defined by
\begin{equation*}
p_t \defeq (p_1(t),\ldots,p_m(t)),\quad t\ge0.
\end{equation*}

The following result provides the dynamics of the \emph{filter process} $\stpr{p}$. It is a standard result and can be found e.g.\ in \cite{bre}, \cite{BaeuerleRieder2007}.

\begin{theorem}\label{thDir:filp}
	For any $j\in\{1,\ldots,m\}$, the process $\stpralt{p_j}$ satisfies
	\begin{equation}\label{eqDir:pj}
	p_j(t) = \pi_\Lambda(j) + \int_0^t \bigg(\frac{\lambda_j\,p_j(s-)}{\hat{\Lambda}_{s-}}-p_j(s-)\bigg)dN_s
	+  \int_0^t \,p_j(s)\big({\hat{\Lambda}_{s}}-{\lambda_j}\big)ds,\quad t\ge0.
	\end{equation}
\end{theorem}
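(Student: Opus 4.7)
The plan is to derive the SDE~\eqref{eqDir:pj} by first obtaining an explicit Bayesian formula for $p_j(t)$ and then applying It\^o's formula for pure-jump semimartingales to that formula.

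\emph{Step 1: Reduction of the conditioning.} Since $\Lambda$ is, by assumption, independent of the Brownian motion $W$, of the claim sizes $(Y_n)_{n\in\N}$ and of the thinning sequence $(Z_n)_{n\in\N}$, and since the distribution of $N$ depends only on $\Lambda$, I would first argue that
\[
p_j(t)=\P(\Lambda=\lambda_j\mid\cG_t)=\P(\Lambda=\lambda_j\mid\cF_t^N),\qquad t\ge 0,
\]
so that only the counting process $N$ carries information about $\Lambda$. This follows from a short independence argument for conditional expectations.

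\emph{Step 2: Explicit Bayesian formula.} Conditional on $\{\Lambda=\lambda_j\}$, $N$ is a homogeneous Poisson process with intensity $\lambda_j$. Using the well-known likelihood of a homogeneous Poisson trajectory (equivalently, a change of measure to a Poisson process of unit intensity), Bayes' theorem yields
\[
p_j(t)=\frac{\pi_\Lambda(j)\,e^{-\lambda_j t}\,\lambda_j^{N_t}}{\sum_{k=1}^m \pi_\Lambda(k)\,e^{-\lambda_k t}\,\lambda_k^{N_t}}.
\]
Denote the numerator by $u_j(t)$ and the denominator by $Z_t$, so that $p_j(t)=u_j(t)/Z_t$ and $\hat\Lambda_t=\sum_k\lambda_k p_k(t)=\sum_k\lambda_k u_k(t)/Z_t$.

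\emph{Step 3: Dynamics of numerator and denominator.} Between jumps $u_j$ satisfies $\dif u_j(t)=-\lambda_j u_j(t)\,\dif t$, while at a jump time of $N$ the factor $\lambda_j^{N_t}$ is multiplied by $\lambda_j$, so $u_j(t)=\lambda_j u_j(t-)$. Writing this as a semimartingale one gets
\[
\dif u_j(t)=-\lambda_j u_j(t)\,\dif t+(\lambda_j-1)\,u_j(t-)\,\dif N_t.
\]
Summing over $j$ and using $\sum_k\lambda_k u_k(t)=\hat\Lambda_t Z_t$ gives
\[
\dif Z_t=-\hat\Lambda_t Z_t\,\dif t+(\hat\Lambda_{t-}-1)\,Z_{t-}\,\dif N_t.
\]

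\emph{Step 4: Quotient rule with jumps.} I would now apply the It\^o formula for the quotient $p_j(t)=u_j(t)/Z_t$, treating continuous and jump parts separately. The continuous part gives $p_j(t)(\hat\Lambda_t-\lambda_j)\,\dif t$, matching the drift in~\eqref{eqDir:pj}. At a jump of $N$ one has $Z_t=\hat\Lambda_{t-}Z_{t-}$ and $u_j(t)=\lambda_j u_j(t-)$, whence
\[
\Delta p_j(t)=\frac{\lambda_j u_j(t-)}{\hat\Lambda_{t-}Z_{t-}}-\frac{u_j(t-)}{Z_{t-}}=p_j(t-)\!\left(\frac{\lambda_j}{\hat\Lambda_{t-}}-1\right),
\]
which is exactly the integrand against $\dif N_s$ in~\eqref{eqDir:pj}. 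Combining the continuous and jump contributions, starting from $p_j(0)=\pi_\Lambda(j)$, yields the claimed integral equation.

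\emph{Main obstacle.} The routine calculation in Steps 3--4 is unproblematic; the only point requiring care is Step 1 together with the justification of the Bayes formula in Step 2, which rests on the conditional Poisson structure of $N$ given $\Lambda$ and on the independence of $\Lambda$ from $(W,Y,Z)$. As an alternative, one could invoke the general innovations approach for marked point processes (as in Br\'emaud~\cite{bre}), observing that $N_t-\int_0^t\hat\Lambda_s\,\dif s$ is a $\fG$-martingale and applying the standard filtering formula; that route avoids the quotient computation but relies on a heavier black-box theorem.
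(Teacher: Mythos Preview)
Your argument is correct. The paper does not actually prove Theorem~\ref{thDir:filp}; it simply records it as a standard filtering result and cites Br\'emaud~\cite{bre} and B\"auerle--Rieder~\cite{BaeuerleRieder2007}, where the innovations method for point-process observations (the route you mention as an alternative at the end) is the tool of choice. Your approach is genuinely different and more elementary: because the unknown $\Lambda$ is static and takes finitely many values, and because the observation is a conditionally homogeneous Poisson process, the posterior $p_j(t)$ admits the explicit closed form in Step~2, and the SDE then falls out of a direct It\^o/quotient computation. This buys transparency and self-containment, at the price of being tied to this particular model; the innovations approach in the cited references would carry over unchanged to, say, a Markov-modulated intensity. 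The one point that deserves a sentence of justification is the reduction in Step~1: the equality $\P(\Lambda=\lambda_j\mid\cG_t)=\P(\Lambda=\lambda_j\mid\cF_t^N)$ needs $\Lambda$ to be independent of $W$, of $(Y_n)$, and of $\bar\alpha$ (hence of $(Z_n)$). This is implicit in the paper's setup---witness the product form of the $\fG$-intensity kernel in Proposition~\ref{prDir:GintkernelPsi}---but is not stated explicitly, so you should spell it out.
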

 Note that $(p_t)$ is a piecewise deterministic Markov process. With increasing time $t$ the filter converges against the true parameter exponentially fast  (see e.g.\ \cite{GGVdV}).  Let $n\in\N_0$ and assume $p_{T_n}=p$. Then the evolution of $\stpr{p}$ up to the next jump time $T_{n+1}$ is the solution, denoted by $\phi(t)=(\phi_j(t))_{j=1,\ldots,m}$, of the following system of ordinary differential equations
\begin{equation}\label{eqDir:ODEphi}
\dot{\phi}_j = \phi_j\bigg(\sum_{k=1}^m \lambda_k\,\phi_k-\lambda_j\bigg),\quad j=1,\ldots,m, \quad \phi(0) = p \in\Delta_m 
\end{equation}
 and the new state of the filter $p$ at the jump times $\seq{T}$ is 
\begin{equation*}
p_{T_n} = J\big(p_{T_n-}\big),\quad n\in\N,
\end{equation*}
where
\begin{equation}\label{eqDir:J}
J\big(p\big) \defeq \left(\frac{\lambda_1\, p_1}{\sum_{k=1}^m \lambda_k\,p_k},\ldots,\frac{\lambda_m\, p_m}{\sum_{k=1}^m \lambda_k\,p_k}\right)  
\end{equation}
for $p=(p_1,\ldots,p_m)\in\Delta_m$.

\begin{proposition}\label{prDir:GintkernelPsi}
	The $\fG$-intensity kernel of $\Psi=\tseq{T}{Y}{Z}$, denoted by  $\hat{\nu}(t,{d(y,z)})$, is given by
	\begin{equation*}
	\hat{\nu}(t,(A,B)) = \hat{\Lambda}_{t-}\,F(A)\,\DBsum\fracbetaqt,\quad t\ge0,\;A\in\mathcal{B}((0,\infty)^d),\; B\in\PPD.
	\end{equation*}
	where $\|\cdot\|$ is the $\ell_1$-norm.
\end{proposition}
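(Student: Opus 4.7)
The plan is to derive $\hat\nu$ by a standard reduction-of-filtration argument starting from a richer filtration in which the unknown parameters $(\Lambda,\alphabar)$ are revealed. Concretely, I introduce the enlarged filtration $\fF=(\cG_t\vee\sigma(\Lambda,\alphabar))_{t\ge 0}$. Conditional on $(\Lambda,\alphabar)$, the independence hypotheses on $\seq{T}$, $\seq{Y}$ and $\seq{Z}$ stated in Section \ref{sec:model} make $\Psi$ a marked Poisson process on $E=(0,\infty)^d\times\PD$ whose $\fF$-intensity kernel is the full-information one,
\[
\nu(t,(A,B)) = \Lambda\,F(A)\,\DBsum \alpha_D.
\]

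The second step is to pass from this $\fF$-intensity to the desired $\fG$-intensity via the projection theorem for compensators of marked point processes (as in Br\'emaud's monograph on point processes, or Jacod's treatment of marked point processes). Since the map $A\mapsto F(A)$ is deterministic, pulling it outside the conditional expectation gives
\[
\hat\nu(t,(A,B)) = F(A)\,\DBsum \Eop\bigl[\Lambda\,\alpha_D\,\big|\,\cG_{t-}\bigr].
\]

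The key identity that remains is the conditional factorisation $\Eop[\Lambda\,\alpha_D\mid\cG_{t-}] = \Eop[\Lambda\mid\cG_{t-}]\,\Eop[\alpha_D\mid\cG_{t-}]$, i.e.\ the conditional independence of $\Lambda$ and $\alphabar$ given $\cG_{t-}$. I would prove this by a Bayes argument: the joint prior on $(\Lambda,\alphabar)$ is the product measure $\pi_\Lambda\otimes\mathrm{Dir}(\bar\beta)$, and, by the independence assumptions, the likelihood of the observations in $\cG_{t-}$ given $(\Lambda,\alphabar)$ splits as a product of one factor depending only on $\Lambda$ (through the arrival times of the driving process $N$) and one depending only on $\alphabar$ (through the multinomial types $Z_1,\ldots,Z_{N_{t-}}$); the claim sizes $Y_i\sim F$ and the Brownian observations in $\cF^W_{t-}$ contribute parameter-free factors. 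Therefore the posterior is again a product, which yields the required conditional independence. Using the definition of $\hat\Lambda_{t-}$ together with Corollary \ref{co:Dirconjugated} (so that $\Eop[\alpha_D\mid\cG_{t-}] = (\beta_D+q_D(t-))/\|\bar\beta+q_{t-}\|$ by the Beta-marginal of the Dirichlet posterior) one arrives at exactly $\hat\Lambda_{t-}\,F(A)\,\DBsum\fracbetaqt$.

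The most delicate step is the projection theorem itself, applied to the bivariate mark space $(0,\infty)^d\times\PD$. As an alternative to quoting it, one can verify $\hat\nu$ directly through the martingale characterisation: for every bounded $\fG$-predictable integrand $f(s,y,z)$ the integral of $f$ against $\Psi$, compensated by $\hat\nu(s,\dif(y,z))\,\dif s$, is to be a $\fG$-martingale. First conditioning on $(\Lambda,\alphabar)$ reduces this to the corresponding full-information martingale statement, after which the factorised posterior finishes the argument.
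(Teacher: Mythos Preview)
Your proposal is correct and follows essentially the same route as the paper: enlarge the filtration by $\sigma(\Lambda,\alphabar)$ to obtain the full-information intensity $\Lambda\,F(A)\sum_{D\in B}\alpha_D$, then project onto $\fG$ and evaluate the resulting conditional expectations via $\hat\Lambda_{t-}$ and Corollary~\ref{co:Dirconjugated}. You are in fact more careful than the paper's proof, which simply writes down $\Eop[\Lambda\mid\cG_t]$ and $\Eop[\alpha_D\mid\cG_t]$ separately without explicitly justifying the conditional factorisation $\Eop[\Lambda\,\alpha_D\mid\cG_{t-}]=\Eop[\Lambda\mid\cG_{t-}]\,\Eop[\alpha_D\mid\cG_{t-}]$ that your Bayes argument supplies.
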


\begin{proof}
First note that $\hat{\nu}$ is a transition kernel. The  $\fG$-intensity is derived from the $\fG \vee \sigma(\bar{\alpha},\Lambda)-$ intensity $\Lambda F(A) \sum_{D\in B}\alpha_D$ by conditioning on $\cG_t$. Note here in particular that $\Eop[\alpha_D | \cG_t]= \fracbetaqt$ (see Corollary \ref{co:Dirconjugated}) and $\Eop[\Lambda |\cG_t]= \hat{\Lambda}_t$.
\end{proof}

We denote by $\hat{\Psi}(dt, d(y,z))$ the compensated random measure given by
\begin{equation}\label{notDir:Psihat}
\hat{\Psi}(dt, d(y,z)) :=  \Psi(dt, d(y,z)) - \hat\nu(t,d(y,z))dt,
\end{equation}
where $\hat\nu$ is defined as in Proposition~\ref{prDir:GintkernelPsi}.
Thus, we obtain the following indistinguishable representation of the surplus process $\Xxib$:
\begin{equation}\label{eq:wealth}
\begin{aligned}
\dif \Xxib_t = \bigg(&r X_s^{\xi,b} + (\mu-r)\xi_s + c(b_s) - \hat{\Lambda}_t\,b_t\Dsum\fracbetaqt\idsum\ind{D}(i)\,\Eop[Y_1^i]\bigg)\dif t  \\
& + \xi_s\sigma\dif W_s - \int_E b_t\sum_{i=1}^d y_i \ind{z}(i)\,\hat{\Psi}(\dtyz),\quad t\ge0.
\end{aligned}
\end{equation}

This dynamic will be one part of the reduced control model discussed in the next section. 

\subsection{The Reduced Control Problem}
The processes $(p_t)_{t\ge0}$ in \eqref{eq:filterdef} and $(q_t)_{t\ge0}$ in \eqref{eq:qD} carry all relevant information about the unknown parameters $\lambda$ and $\alphabar$ contained in the observable filtration $\fG$ of the insurer.  Therefore, the state process of the reduced control problem with complete observation is the $(\ell+m+1)$-dimensional process
$$(\Xxib_s,p_s,q_s)_{s\in[t,T]},$$ 
where $(\Xxib_s)$ is given by \eqref{eq:wealth}, $(p_s)$ is given by \eqref{eqDir:pj} and $(q_s)$ is given by \eqref{eq:qD}
for some fixed initial time $t\in[0,T)$ and $\xib\in\UtT$. 
We can now formulate the reduced control problem. For any $(t,x,p,q)\in\domainVDir$, the value functions are given by
\begin{equation}\label{eqDir:P} \tag{P}
\begin{aligned}
V^{\xi,b}(t,x,p,q) &\defeq \Eop^{t,x,p,q}\big[U(\Xxib_T)\big],\\
V(t,x,p,q) &\defeq \sup_{(\xi,b)\in\UtT}V^{\xi,b}(t,x,p,q),
\end{aligned}
\end{equation}
where $ \Eop^{t,x,p,q}$ denotes the conditional expectation given $(\Xxib_t,p_t,q_t)=(x,p,q)$. As before, an investment-reinsurance strategy $(\xi^*,b^*)\in\UtT$ is optimal if
$V(t,x,p,q) = V^{\xi^*,b^*}(t,x,p,q).$
%and the insurer is interested in optimal strategies $(\xi^*,b^*)\in\UtT$, i.e.\ in strategies
%\begin{equation*}
%(\xi^*,b^*) = \arg\sup_{\xib\in\UtT} V^{\xi,b}(t,x,p,q).
%\end{equation*}

\section{The Solution}\label{sec:sol}
In a first step we derive the Hamilton-Jacobi-Bellman (HJB) equation. Using standard methods and assuming full differentiability of $V$ we obtain
\begin{equation}\label{eqDir:HJBV}
\begin{aligned}
0&=\sup_{(\xi,b)\in\R\times[0,1]} \bigg\{ V_t(t,x,p,q) - \sum_{k=1}^m \lambda_k\,p_kV(t,x,p,q) + \frac12\sigma^2V_{xx}(t,x,p,q)\xi^2  \\
&\quad+V_x(t,x,p,q)\big(rx + (\mu-r)\xi+c(b)\big) + \sum_{j=1}^m V_{p_j}(t,x,p,q)p_j\bigg(\sum_{k=1}^m \lambda_k\,p_k-\lambda_j\bigg) \\
&\quad+ \sum_{k=1}^m\lambda_k\,p_k\Dsum\fracbetaq\int\limits_{(0,\infty)^d}\!\!\!\! V\Big(t,x-b\idsum y_i\ind{D}(i),J(p),v(q,D)\Big) F(\dif y)\bigg\},
\end{aligned}
\end{equation}
where $v(q,D) := (q_E+\ind{\{E={D}\}})_{E\subset \mathbb{D}}$.
For solving \eqref{eqDir:HJBV} we apply the usual separation approach: For any $(t,x,p,q)\in\domainVDir$, we assume
\begin{equation}\label{eqDir:separation}
V(t,x,p,q) = -e^{-\alpha x e^{r(T-t)}}g(t,p,q).
\end{equation}
This  implies that we conclude from~\eqref{eqDir:HJBV}
\begin{equation}\label{eqDir:HJBgdiff}
\begin{aligned}
&0=\inf_{(\xi,b)\in\R\times[0,1]} \bigg\{ g_t(t,p,q) - \sum_{k=1}^m \lambda_k\,p_k\,g(t,p,q) +\sum_{j=1}^m g_{p_j}(t,p,q)p_j\bigg(\sum_{k=1}^m \lambda_k\,p_k-\lambda_j\bigg) \\
& -\alpha\,e^{r(T-t)}g(t,p,q)\Big((\mu-r)\xi + c(b) - \frac12\alpha\,\sigma^2\,e^{r(T-t)}\xi^2\Big)  \\
&+\sum_{k=1}^m\lambda_k\,p_k\Dsum\fracbetaq g(t,J(p),v(q,D)) \!\!\int\limits_{(0,\infty)^d}\!\!\!\! \exp\bigg\{\alpha\,b\,e^{r(T-t)}\idsum y_i\ind{D}(i)\bigg\} F(\dif y) \bigg\}.
\end{aligned}
\end{equation}
However, $V$ is probably not differentiable w.r.t.\ $t$ and $p_j$, $j=1,\ldots,m$. Assuming $(t,p)\mapsto g(t,p,q)$ is Lipschitz on $[0,T]\times\Delta_m$ for all $q\in\N_0^\ell$, we can replace the partial derivatives of $g$ w.r.t.\ $t$ and $p_j$, $j=1,\ldots,m$, by the generalized Clark gradient (see appendix).
 Throughout, we denote by ${\mathcal L}$ an operator acting on functions $g:\domaingDir\to(0,\infty)$ and $\xib\in\R\times[0,1]$ which is defined by
\begin{equation}\label{eqDir:L}
\begin{aligned}
&{\mathcal L} g(t,p,q;\xi,b):=  - \sum_{k=1}^m \lambda_k\,p_k\,g(t,p,q)+ \alpha\,e^{r(T-t)}g(t,p,q) f_1(t,\xi) +  f_2(t,p,q,b), %\\
%&\defeq - \sum_{k=1}^m \lambda_k\,p_k\,g(t,p,q) -\alpha\,e^{r(T-t)}g(t,p,q)\Big((\mu-r)\xi + c(b) - %\frac12\alpha\,\sigma^2\,e^{r(T-t)}\xi^2\Big) \\
%& +\sum_{k=1}^m\lambda_k\,p_k\Dsum\fracbetaq g\big(t,J(p),v(q,D)\big) \!\!\int\limits_{(0,\infty)^d}\!\!\!\! \exp\bigg\{\alpha\,b\,e^{r(T-t)}\idsum y_i\ind{D}(i)\bigg\} F(\dif y).
\end{aligned}
\end{equation}
where
\begin{equation}\label{eq:fone}
f_1(t,\xi) := -(\mu-r)\xi + \frac12\sigma^2\,\alpha\,e^{r(T-t)}\xi^2
\end{equation}
and
\begin{align*}
& f_2(t,p,q,b) := -\alpha\,e^{r(T-t)}\,g(t,p,q)(\eta-\theta)\,\kappa -\alpha\,e^{r(T-t)}\,g(t,p,q)(1+\theta)\,\kappa\,b \\
&+\sum_{k=1}^m\lambda_k\,p_k\Dsum\fracbetaq g\big(t,J(p),v(q,D)\big) \intdalt\exp\bigg\{\alpha\,b\,e^{r(T-t)}\idsum y_i\ind{D}(i)\bigg\}\! F(\dif y).
\end{align*}

Using this operator and replacing the partial derivatives of $g$ w.r.t.\ $t$ and $p_j$, $j=1,\ldots,m$,  in~\eqref{eqDir:HJBgdiff} by the generalized Clarke gradient, we get the generalized HJB equation for $g$:
\begin{equation}\label{eqDir:HJBg}
0 = \inf_{(\xi,b)\in\R\times[0,1]}\big\{ {\mathcal L} g(t,p,q;\xi,b)\big\} + \inf_{\varphi\in\partial^C\! g_q(t,p)}\bigg\{\varphi_0 + \sum_{j=1}^m\varphi_j\,p_j\bigg(\sum_{k=1}^m \lambda_k\,p_k-\lambda_j\bigg)\bigg\}
\end{equation}
for all $(t,p,q)\in\domaingDir$ with boundary condition
\begin{equation}\label{eqDir:HJBgbound}
g(T,p,q) = 1,\quad (p,q)\in\Delta_m\times\N_0^\ell.
\end{equation}
Note that  we set $\partial^C\! g_q(t,p)=\{\nabla g_q(t,p)\}$ at the points $(t,p)$ where the gradient exists.
The notation $g_q(t,p)$ indicates that the derivative is w.r.t.\ $t$ and $p$ for fixed $q$.

\subsection{Candidate for an Optimal Strategy}
To obtain candidates for an optimal strategy, we rewrite the generalized HJB equation~\eqref{eqDir:HJBg} as
\begin{equation}\label{eqDir:gHJBa}
\begin{aligned}
0 &= - \sum_{k=1}^m\lambda_k\,p_k\,g(t,p,q) + \alpha\,e^{r(T-t)}g(t,p,q)\inf_{\xi\in\R} f_1(t,\xi) + \inf_{b\in[0,1]} f_2(t,p,q,b) \\
&\quad + \inf_{\varphi\in\partial^C g_q(t,p)}\bigg\{\varphi_0 + \sum_{j=1}^m\varphi_j\,p_j\bigg(\sum_{k=1}^m \lambda_k\,p_k-\lambda_j\bigg)\bigg\}.
\end{aligned}
\end{equation}
%where 
%\begin{equation}\label{eq:fone}
%f_1(t,\xi) \defeq -(\mu-r)\xi + \frac12\sigma^2\,\alpha\,e^{r(T-t)}\xi^2
%\end{equation}
%and
%\begin{align*}
%& f_2(t,p,q,b) = -\alpha\,e^{r(T-t)}\,g(t,p,q)(\eta-\theta)\,\kappa -\alpha\,e^{r(T-t)}\,g(t,p,q)(1+\theta)\,\kappa\,b \\
%&+\sum_{k=1}^m\lambda_k\,p_k\Dsum\fracbetaq g\big(t,J(p),v(q,D)\big)\times \intdalt\exp\bigg\{\alpha\,b\,e^{r(T-t)}\idsum %y_i\ind{D}(i)\bigg\}\! F(\dif y).
%\end{align*}
Hence we can conclude  that the unique candidate of an optimal investment strategy $\prTalt{\xi^{\star}}$ is given by 
\begin{equation}\label{eqDir:xistarfunc}
\xi^{\star}(t) = \frac{\mu-r}{\sigma^2} \frac1\alpha e^{-r(T-t)},\quad t\in[0,T].
\end{equation}

The following lemma will yield the first order condition for a candidate of an optimal reinsurance strategy. In order to avoid confusion with a reinsurance strategy, we will use $a\in\R$ instead of $b=(b_t)$ as an argument for the function $f_2$ in the following.

\begin{lemma}\label{leDir:ftwo}
	For any $(t,p,q)\in\domaingDir$, the function $\R\ni a\mapsto f_2(t,p,q,a)$ is strictly convex and
	\begin{align*}
	\partialvar{a}f_2(t,p,q,a) &= -\alpha\,e^{r(T-t)}\Bigg(g(t,p,q)\,(1+\theta)\,\kappa -\Dsum\fracbetaq g\big(t,J(p),v(q,D)\big)\times \\
	&\quad \idsum\ind{D}(i)\intdalt y_i\,\exp\bigg\{\alpha\,a\,e^{r(T-t)}\sum_{j=1}^d y_j\ind{D}(j)\bigg\} F(\dif y) \sum_{k=1}^m\lambda_k\,p_k\Bigg).
	\end{align*}
\end{lemma}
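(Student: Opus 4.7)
The plan is to verify both claims by straightforward calculus once we justify differentiation under the integral sign. Decompose
\begin{equation*}
f_2(t,p,q,a)=C_0+C_1\,a+\sum_{k=1}^m\lambda_k p_k\sum_{D\subset\mathbb{D}}\fracbetaq\,g(t,J(p),v(q,D))\,I_D(a),
\end{equation*}
where $C_0=-\alpha e^{r(T-t)}g(t,p,q)(\eta-\theta)\kappa$, $C_1=-\alpha e^{r(T-t)}g(t,p,q)(1+\theta)\kappa$, and
\begin{equation*}
I_D(a)\defeq\intdalt\exp\bigg\{\alpha\,a\,e^{r(T-t)}\idsum y_i\ind{D}(i)\bigg\}F(\dif y).
\end{equation*}
Only the $a$-dependence of $I_D$ requires work.

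For each non-empty $D\subset\mathbb{D}$, the integrand $h_D(a,y)\defeq\exp\{\alpha a e^{r(T-t)}\sum_i y_i\ind{D}(i)\}$ is smooth in $a$, and $\partial_a h_D$ equals $\alpha e^{r(T-t)}\sum_i y_i\ind{D}(i)\cdot h_D$. To apply the dominated convergence theorem on any compact interval $a\in[-M,M]$, I would bound
\begin{equation*}
|\partial_a h_D(a,y)|\le \alpha e^{rT}\Big(\idsum y_i\Big)\exp\Big\{\alpha M e^{rT}\idsum y_i\Big\}\le \alpha e^{rT}\exp\Big\{(\alpha M e^{rT}+1)\idsum y_i\Big\},
\end{equation*}
using $x\le e^x$; the right-hand side is $F$-integrable by the moment assumption~\eqref{eqDir:F} with $z=\alpha M e^{rT}+1$. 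The same bound (squared, with an extra $\sum y_i$) handles $\partial_a^2 h_D$. Hence $I_D$ is $C^2$ on $\R$, differentiation under the integral is legitimate, and differentiating $f_2$ term by term yields exactly the expression claimed in the lemma.

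For strict convexity I would compute $\partial_a^2 f_2$: the constant and linear terms drop out, leaving
\begin{equation*}
\partial_a^2 f_2(t,p,q,a)=(\alpha e^{r(T-t)})^2\sum_{k=1}^m\lambda_k p_k\sum_{D\subset\mathbb{D}}\fracbetaq\,g(t,J(p),v(q,D))\intdalt\Big(\idsum y_i\ind{D}(i)\Big)^2 h_D(a,y)\,F(\dif y).
\end{equation*}
Every factor in this expression is strictly positive: $\lambda_k>0$ and $\sum_k p_k=1$ give $\sum_k\lambda_k p_k>0$; the separation ansatz yields $g>0$ throughout (since $U<0$ forces $V<0$); $\beta_D+q_D>0$ by the assumption $\bar\beta\in(0,\infty)^\ell$; and for every non-empty $D$ the integrand is strictly positive on $(0,\infty)^d$ because $\sum_i y_i\ind{D}(i)>0$ there. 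Consequently $\partial_a^2 f_2(t,p,q,a)>0$ for every $a\in\R$, which proves strict convexity.

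The only non-routine point is justifying differentiation under the integral sign, and that is handled cleanly by the moment assumption~\eqref{eqDir:F}; everything else is bookkeeping.
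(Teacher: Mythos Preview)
Your proposal is correct and follows essentially the same route as the paper: the paper's proof simply notes that $f_2$ is a linear function plus a strictly convex function in $a$ and declares the derivative ``straightforward,'' while you carry out the details, in particular justifying differentiation under the integral via the moment condition~\eqref{eqDir:F} and computing the second derivative explicitly to verify strict positivity. The only cosmetic slip is the bound $\alpha e^{r(T-t)}\le \alpha e^{rT}$, which requires $r\ge 0$; for fixed $t$ just use the constant $\alpha e^{r(T-t)}$ itself (or $\alpha e^{|r|T}$ uniformly).
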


\begin{proof}
	Strict convexity follows since $f_2$ is the sum of a linear and a strictly convex function in $a$. The derivative is straightforward.
\end{proof}

  For any $(t,p,q)\in\domaingDir$ and $a\in\R$, we define in case $g>0$
\begin{equation}\label{eqDir:hb}
\begin{aligned}
h(t,p,q,a)\defeq &\sum_{k=1}^m\lambda_k\, p_k \Dsum\fracbetaq  \frac{g(t,J(p),v(q,D))}{g(t,p,q)}\times \\
&\idsum\ind{D}(i)\intdalt y_i\exp\bigg\{\alpha\,a\,e^{r(T-t)} \sum_{j=1}^d y_j\ind{D}(j)\bigg\} F(\dif y).
\end{aligned}
\end{equation}
Furthermore, we set
\begin{align*}
A(t,p,q) &\defeq h(t,p,q,0), \\
B(t,p,q) &\defeq h(t,p,q,1).
\end{align*}

Obviously $A(t,p,q) \le B(t,p,q)$. Setting $\partialvar{b}f_2$ to zero (c.f.\ Lemma~\ref{leDir:ftwo}), we obtain the first order condition
\begin{equation}\label{eqDir:ftwofoc}
(1+\theta)\,\kappa = h(t,p,q,a).
\end{equation}
 If  a minimizer exists it is unique due to the strict convexity property of $f_2$ w.r.t.\ $a$.
The next proposition states that this equation is solvable and the solution takes values in $[0,1]$ depending on the safety loading parameter $\theta$ of the reinsurer. 

\begin{proposition}\label{prDir:candidateb}
	For any $(t,p,q)\in\domaingDir$, Equation~\eqref{eqDir:ftwofoc} has a unique root w.r.t.\ $a$, denoted by $r(t,p,q)$, which is increasing w.r.t.\ the safety loading  parameter of the reinsurer $\theta$. Moreover,
	it holds, 
	\begin{enumerate}
		\item $r(t,p,q) \le 0$ if $\theta\le A(t,p,q)/\kappa-1$,
		\item $0< r(t,p,q) <1$ if $A(t,p,q)/\kappa-1< \theta < B(t,p,q)/\kappa-1$,
		\item $r(t,p,q)\ge 1$ if $\theta\ge B(t,p,q)/\kappa-1$.
	\end{enumerate}
\end{proposition}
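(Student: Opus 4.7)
The plan is to reduce everything to continuity and strict monotonicity of $a \mapsto h(t,p,q,a)$ and then apply the intermediate value theorem separately on the three regions determined by $a = 0$ and $a = 1$.

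First I would verify that $a \mapsto h(t,p,q,a)$ is well-defined and continuous on all of $\R$. Each integrand has the form $y_i \exp\{\alpha a e^{r(T-t)} \sum_j y_j \mathds{1}_D(j)\}$, which is dominated by $y_i \exp\{\alpha |a| e^{r(T-t)} \sum_j y_j\}$; by assumption \eqref{eqDir:F}, the moment generating function $M_F(z)$ is finite for every real $z$, so all integrals converge and dominated convergence gives continuity in $a$. Strict monotonicity is the key observation: differentiating under the integral sign (justified by the same domination) we get a sum of strictly positive terms because $\sum_j y_j \mathds{1}_D(j) > 0$ on $(0,\infty)^d$ for every non-empty $D \subset \mathbb{D}$, and at least one $D$ receives positive weight since $\sum_{D \subset \mathbb{D}} (\beta_D + q_D) > 0$ and $\sum_k \lambda_k p_k > 0$. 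Equivalently, this monotonicity is exactly the strict convexity of $f_2$ in its last argument asserted in Lemma~\ref{leDir:ftwo} divided by $g(t,p,q) > 0$.

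Next I would determine the range of $h(t,p,q,\cdot)$. Since $\sum_j y_j \mathds{1}_D(j) > 0$ on $(0,\infty)^d$, as $a \to -\infty$ the exponentials tend to $0$ pointwise and are bounded by $1$, so dominated convergence yields $h(t,p,q,a) \to 0$. As $a \to +\infty$ the exponentials blow up on a set of positive $F$-measure, and monotone convergence gives $h(t,p,q,a) \to \infty$. Combined with continuity and strict monotonicity this implies that for every $\theta > 0$ (hence every value $(1+\theta)\kappa > 0$) the equation $h(t,p,q,a) = (1+\theta)\kappa$ admits exactly one root, which I call $r(t,p,q)$. Monotonicity of $r(t,p,q)$ with respect to $\theta$ is then immediate: if $\theta_1 < \theta_2$ then $(1+\theta_1)\kappa < (1+\theta_2)\kappa$, and since $h$ is strictly increasing in $a$ the corresponding roots must satisfy $r_1 < r_2$.

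For assertions (a)--(c) I would simply read off the position of $r(t,p,q)$ relative to $0$ and $1$ using $A(t,p,q) = h(t,p,q,0)$ and $B(t,p,q) = h(t,p,q,1)$. The condition $\theta \le A(t,p,q)/\kappa - 1$ is equivalent to $(1+\theta)\kappa \le h(t,p,q,0)$, so by strict monotonicity the unique root lies in $(-\infty, 0]$, giving (a); the case $A/\kappa - 1 < \theta < B/\kappa - 1$ places $(1+\theta)\kappa$ strictly between $h(\cdot,0)$ and $h(\cdot,1)$, yielding $0 < r(t,p,q) < 1$ in (b); and (c) follows symmetrically. No genuine obstacle is expected here — the whole argument rests on the strict convexity established in Lemma~\ref{leDir:ftwo} — the only point requiring care is the uniform integrability needed to justify dominated convergence and differentiation under the integral, which the assumption $M_F(z) < \infty$ for all $z \in \R$ handles.
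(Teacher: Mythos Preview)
Your proposal is correct and follows essentially the same approach as the paper: both arguments rest on the strict monotonicity of $a\mapsto h(t,p,q,a)$ (inherited from the strict convexity in Lemma~\ref{leDir:ftwo}) and then locate the root relative to the reference points $a=0$ and $a=1$ via $A(t,p,q)=h(t,p,q,0)$ and $B(t,p,q)=h(t,p,q,1)$. Your write-up is in fact more complete, since you explicitly verify the range of $h(t,p,q,\cdot)$ is $(0,\infty)$ via the limits at $\pm\infty$, which the paper leaves implicit.
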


\begin{proof}
Note that $a\mapsto h(t,p,q,a)$ is strictly increasing. The proof then follows from considering the zeros of \eqref{eqDir:ftwofoc} in $\theta$ when $a=0$ and when $a=1$.
\end{proof}

Therefore, the proposition above provides the candidate for an optimal reinsurance strategy.
For any $(t,p,q)\in\domaingDir$, we set
\begin{equation}\label{eqDir:bstarfunc}
b(t,p,q) \defeq \begin{cases} 0, & \theta\le A(t,p,q)/\kappa-1,\\
1, &\theta\ge B(t,p,q)/\kappa-1, \\
r(t,p,q), &\text{otherwise}.
\end{cases}
\end{equation}
Then the candidate for an optimal reinsurance strategy  is given by $b^{\star}(t) \defeq b(t-,p_{t-},q_{t-})$.

\subsection{Verification}
This section is devoted to a verification theorem to ensure that the solution of the stated generalized HJB equation yields the value function (see Theorem~\ref{thDir:veri}). We also demonstrate an existence theorem of a solution of the HJB equation (see Theorem~\ref{thDir:existanceHJB}).  Both proofs can be found in the appendix.

\begin{theorem}\label{thDir:veri}
	Suppose there exists a bounded function $h:\domaingDir\to(0,\infty)$  such that $t\mapsto h(t,p,q)$ and $t\mapsto h(t,\phi(t,p),q)$ are Lipschitz on $[0,T]$ for all $(p,q)\in\Delta_m\times\N_0^\ell$ as well as $p\mapsto h(t,p,q)$ is concave for all $(t,q)\in[0,T]\times\N_0^\ell$. Furthermore, $h$ satisfies the generalized HJB equation \eqref{eqDir:HJBg}
%	\begin{equation}\label{eqDir:hHJB}
%	0=\inf_{(\xi,b)\in\R\times[0,1]}\{\mathcal{L} h(t,p,q,\xi,b)\} + \inf_{\varphi\in\partial^C h_{q}(t,p)}\bigg\{\varphi_0 + \sum_{j=1}^m\varphi_j\,p_j\bigg(\sum_{k=1}^m \lambda_k\,p_k-\lambda_j\bigg)\bigg\},
%	\end{equation}
	for all $(t,p,q)\in[0,T)\times\Delta_m\times\N_0^\ell$ with boundary condition
	\begin{equation}\label{eqDir:hHJBbcond}
	h(T,p,q) = 1,\quad (p,q)\in\Delta_m\times[0,T].
	\end{equation}
	Then
	\begin{equation*}
	V(t,x,p,q)= -e^{-\alpha x e^{r(T-t)}}h(t,p,q),\quad (t,x,p,q)\in\domainVDir,
	\end{equation*}
	and $\dprtTalt{\xi^\star}{b^\star}$ with $\xi^\star(s)$ given by~\eqref{eqDir:xistarfunc} and $b^\star(s)\defeq b(s-,p_{s-},q_{s-})$ given by~\eqref{eqDir:bstarfunc} (with $g$ replaced by $h$ in $A(s,p,q)$ and $B(s,p,q)$) 
	is an optimal feedback strategy for the given optimization problem~\eqref{eqDir:P}, i.e.\ $V(t,x,p,q) = V^{\xi^\star,b^\star}(t,x,p,q)$.
\end{theorem}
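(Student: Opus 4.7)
The plan is to apply a generalized It\^o formula to the candidate value process
$$M_s := -e^{-\alpha X^{\xi,b}_s e^{r(T-s)}}\, h(s, p_s, q_s),\qquad s\in[t,T],$$
for an arbitrary admissible $(\xi,b)\in\UtT$, show that it is a supermartingale, and deduce that $V^{\xi,b}(t,x,p,q) \le -e^{-\alpha x e^{r(T-t)}}h(t,p,q)$. For the feedback strategy $(\xi^\star,b^\star)$, I will argue that the HJB infima are attained, so that $M_s$ becomes a true martingale and equality is reached. Combined, this identifies the right-hand side as the value function and certifies optimality.

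First I would decompose $M_s-M_t$ using It\^o's formula. The factor $\Phi_s := e^{-\alpha X_s^{\xi,b} e^{r(T-s)}}$ is smooth in $(s,X^{\xi,b}_s)$, so its contribution from the dynamics \eqref{eq:wealth} is handled classically. For $h(s, p_s, q_s)$, the key observation is that between claim arrivals $q_s$ is constant and $p_s$ follows the deterministic flow \eqref{eqDir:ODEphi}, while at jump times of $\Psi$ the value picks up the increment $h(s, J(p_{s-}), v(q_{s-}, Z_n))-h(s,p_{s-},q_{s-})$, whose compensator under Proposition~\ref{prDir:GintkernelPsi} is precisely the summation appearing in $f_2$. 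By the Lipschitz hypothesis on $(t,p)\mapsto h(t,p,q)$ and on $t\mapsto h(t,\phi(t,p),q)$, Rademacher's theorem and a Lebourg-type generalized chain rule (as in \cite{BaeuerleRieder2007}) supply, for a.e.\ $s$ between jumps, a measurable selection $\varphi^s=(\varphi_0^s,\ldots,\varphi_m^s)\in\partial^C h_{q_s}(s,p_s)$ such that
$$\frac{d}{ds}\,h(s,p_s,q_s) = \varphi_0^s + \sum_{j=1}^m \varphi_j^s\, p_j(s)\bigg(\sum_{k=1}^m \lambda_k\, p_k(s)-\lambda_j\bigg).$$
Collecting all contributions and substituting the compensated measure $\hat\Psi$ from \eqref{notDir:Psihat}, the drift of $M_s$ at time $s$ equals
$$-\Phi_s\,h(s,p_s,q_s)^{-1}\bigg[\mathcal L h(s,p_s,q_s;\xi_s,b_s) + \varphi_0^s + \sum_{j=1}^m \varphi_j^s\, p_j(s)\bigg(\sum_{k=1}^m \lambda_k\,p_k(s)-\lambda_j\bigg)\bigg]\cdot h(s,p_s,q_s).$$
By the generalized HJB equation \eqref{eqDir:HJBg}, the bracket is nonnegative for every admissible $(\xi_s,b_s)$ and every selection $\varphi^s$; since $\Phi_s>0$ and $h>0$, the drift of $M_s$ is nonpositive, so $M_s$ is a local supermartingale. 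The local-martingale part consists of a Brownian integral and a $\hat\Psi$-integral whose integrands are bounded by constants times $h(s,p_s,q_s)$ and $\Phi_s$; boundedness of $h$, $|\xi_s|\le K$, $b_s\in[0,1]$, together with the exponential moment bound \eqref{eqDir:F} on the finite horizon $[t,T]$, then upgrade it to a true martingale. Taking conditional expectation and using the terminal condition \eqref{eqDir:hHJBbcond} yields $V^{\xi,b}(t,x,p,q) = \Etx[M_T]\le M_t = -e^{-\alpha x e^{r(T-t)}}h(t,p,q)$.

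Next I would establish equality for the feedback $(\xi^\star,b^\star)$. The minimizer $\xi^\star$ of $f_1$ is immediate, and $b^\star$ solves the first-order condition from Proposition~\ref{prDir:candidateb}; admissibility holds because $b^\star$ is $[0,1]$-valued by construction and $\fG$-predictable by piecewise-deterministic Markov measurability of $(p_s,q_s)$ combined with the continuity of the implicit root $r(t,p,q)$ (guaranteed by the strict monotonicity in $a$ of $h(t,p,q,a)$ noted in Lemma~\ref{leDir:ftwo}). For the Clarke-gradient term, the concavity assumption on $p\mapsto h(t,p,q)$ ensures a measurable minimizing selection $\varphi^{s,\star}$ along the flow (this is where concavity is essential: it makes $\partial^C h_q$ a superdifferential and provides semicontinuity needed for the selection). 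With these choices the bracket in the drift vanishes a.e., $M_s$ becomes a true martingale, and $V^{\xi^\star,b^\star}(t,x,p,q) = -e^{-\alpha x e^{r(T-t)}}h(t,p,q)$, which combined with the previous upper bound gives the claimed identity.

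The main obstacle is this last chain-rule step: proving that, despite the non-differentiability of $h$ in $(t,p)$, the process $s\mapsto h(s,p_s,q_s)$ admits the Clarke-gradient representation above between jumps, and that the minimizing selection $\varphi^{s,\star}$ can be made $\fG$-progressively measurable. The Lipschitz and concavity hypotheses in the theorem are precisely what make this available; the remainder of the argument is bookkeeping, provided martingale properties are justified using the boundedness of $h$, $\xi$ and $b$, and the exponential-moment condition \eqref{eqDir:F}.
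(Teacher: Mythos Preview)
Your outline is essentially the paper's strategy: expand $G(s,X_s^{\xi,b},p_s,q_s)=-e^{-\alpha X_s^{\xi,b}e^{r(T-s)}}h(s,p_s,q_s)$ via It\^o, use the HJB inequality to show the drift has a sign, conclude the supermartingale bound, and then attain equality with $(\xi^\star,b^\star)$. The paper isolates the It\^o expansion in Lemma~\ref{leDir:characG}, which delivers exactly the decomposition you sketch, with $\eta^{\xi,b}$ shown to be a true $\fG$-martingale.

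Two points where the paper is simpler than your write-up suggests. First, the ``main obstacle'' you flag (a Clarke chain rule along the flow plus a measurable \emph{minimizing} selection $\varphi^{s,\star}$) is circumvented rather than solved. The hypothesis that $t\mapsto h(t,\phi(t,p),q)$ is Lipschitz already makes $s\mapsto h(s,p_s,q_s)$ absolutely continuous between jumps, so the classical derivative $Dh(s,p_s,q_s)$ exists for Lebesgue-a.e.\ $s$; the paper simply sets $\mathcal H h$ to zero on the null set where it does not. At points of differentiability the Clarke gradient is the singleton $\{\nabla h_q\}$, so the infimum over $\varphi$ in \eqref{eqDir:HJBg} is automatically attained there, and no selection argument is required for the equality direction. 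Concavity in $p$ is used to ensure this identification of $\partial^C h_q$ with the gradient at a.e.\ point, not to produce a minimizing selection.

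Second, your upgrade from local to true martingale is where more care is needed than you indicate: the factor $\Phi_s=e^{-\alpha X_s^{\xi,b}e^{r(T-s)}}$ is \emph{not} bounded, so boundedness of $h,\xi,b$ alone does not suffice. The paper handles this via a Girsanov-type change of measure (Lemma~\ref{le:fbounded}), bounding $|f(t,X_t^{\xi,b})|/L_t^{\xi,b}$ by a deterministic constant; this is where the moment condition \eqref{eqDir:F} enters substantively. Your proposal gestures at this but should name the mechanism.
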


\subsection{Existence result for the value function}
\label{secDir:existencevalue}
%==========================================================================================

We now show that there exists a function $h:\domaingDir\to(0,\infty)$ satisfying the conditions stated in Theorem~\ref{thDir:veri}. For this purpose let 
\begin{equation}\label{eqDir:g}
g(t,p,q) := \inf_{(\xi,b)\in\UtT}g^{\xi,b}(t,p,q),
\end{equation}
where
\begin{equation}\label{eqDir:gxib}
\begin{aligned}
&g^{\xi,b}(t,p,q) \defeq \Eop^{t,p,q}\bigg[\exp\bigg\{-\int_t^T \alpha\, e^{r(T-s)}\big((\mu-r)\,\xi_s+c(b_s)\big)ds\\
&\quad -\int_t^T\alpha\,\sigma\, e^{r(T-s)}\xi_sdW_s  +\int_t^T \intEd \alpha\, b_s\, e^{r(T-s)} \idsum y_i\ind{z}(i)\,\Psi(\dsyz)\bigg\}\bigg],
\end{aligned}
\end{equation}
where $\Eop^{t,p,q}$ denotes the conditional expectation given $(p_t,q_t)=(p,q)$. The next lemma summarizes useful properties of $g$. A proof can be found in the appendix.

\begin{lemma}\label{leDir:propg}
	The function $g$ defined by~\eqref{eqDir:g} has the following properties:
	\begin{enumerate}
		\item $g$ is bounded on $\domaingDir$ by a constant $0<K_1<\infty$ and $g>0$.
		\item $g^{\xi,b}(t,p,q) = \sum_{j=1}^m p_j\,g^{\xi,b}(t,e_j,q)$ for all $(t,p,q)\in\domaingDir$ and $\xib\in\UtT$.
		 \item $\Delta_m\ni p\mapsto g(t,p,q)$ is concave for all $(t,q)\in[0,T]\times\N_0^\ell$.
		\item $[0,T]\ni t\mapsto g(t,p,q)$ is Lipschitz on $[0,T]$ for all $(p,q)\in\Delta_m\times\N_0^\ell$.
		\item $[0,T]\ni t\mapsto g(t,\phi(t),q)$ with $\phi(0)=p$ is Lipschitz on $[0,T]$ for all $(p,q)\in\Delta_m\times\N_0^\ell$.
%		 \item Let $M$ be the set of all points $(t,p,q)\in\domaingDir$, where $D\,g$ exists. Then there exists a constant $0<K_4<\infty$ such that $|D\,g(t,p,q)|\le K_4$ for all $(t,p,q)\in M$.
%		\item There exists a constant $0<K_5<\infty$ such that $\big|\mathcal{L}g(t,p,q;\xi,b)\big|\le K_5$ for all $(t,p,q)\in\domaingDir$ and $\xib\in[-K,K]\times[0,1]$. 
%		\item There exists a constant $0<K_6<\infty$ such that $\big|\inf_{(\xi,b)\in[-K,K]\times[0,1]}\mathcal{L}g(t,p,q;\xi,b)\big|\le K_6$ for all $(t,p,q)\in\domaingDir$.	
	%	\item $g^{\xi,b}(t,p,q) = \sum_{j=1}^m p_j\,g^{\xi,b}(t,e_j,q)$ for all $(t,p,q)\in\domaingDir$ and $\xib\in\UtT$.
	%	\item $g^{\xi,b}(t,J(p),q)=\sum_{j=1}^m \frac{\lambda_j\,p_j}{\sum_{k=1}^m\lambda_k\,p_k}g^{\xi,b}(t,e_j,q)$ for all $(t,p,q)\in\domaingDir$ and $\xib\in\UtT$.
	%	\item $g^{\xi,b}(t,p,q)=\Dsum\fracbetaq g^{\xi,b}(t,p,v(q,D))$ for all $(t,p,q)\in\domaingDir$ and $\xib\in\UtT$.
	%	\item $\Delta_m\ni p\mapsto g(t,p,q)$ is concave for all $(t,q)\in[0,T]\times\N_0^\ell$.   
	\end{enumerate}
\end{lemma}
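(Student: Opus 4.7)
My plan is to dispatch the five parts in order, using (a)--(c) as essentially structural observations and reserving the real work for the Lipschitz estimates in (d)--(e).

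For (a), positivity of $g^{\xi,b}$ is immediate since it is the expectation of a strictly positive random variable, and $g=\inf g^{\xi,b} \ge 0$ is in fact bounded below away from $0$ by the same argument applied to the worst case below. For the upper bound, I would split the exponent into its drift, Brownian and jump contributions. The drift part $-\int_t^T \alpha e^{r(T-s)}((\mu-r)\xi_s+c(b_s))ds$ is bounded in absolute value by a constant using $|\xi_s|\le K$ and $b_s\in[0,1]$. For the Brownian stochastic integral, Novikov's condition holds because $\xi$ is bounded, so the Doléans-Dade exponential is a true martingale; taking expectations pulls out a bounded Itô correction term. For the jump integral, I would condition on $(\Lambda,\bar\alpha)$ so that $\Psi$ is a Poisson random measure with explicit intensity, then apply the exponential formula: the resulting bound is
\begin{equation*}
\exp\Bigl( \lambda_m (T-t) \sum_D \alpha_D \bigl(M_F(\alpha e^{rT})-1\bigr)\Bigr),
\end{equation*}
which is finite by assumption \eqref{eqDir:F} and is uniform in the strategy since $b_s\in[0,1]$.

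For (b), fix any $(\xi,b)\in\UtT$. The posterior distribution of $\Lambda$ given $\cG_t$ is the filter $p_t$, and on the event $\{p_t=e_j\}$ we have $\Lambda=\lambda_j$ $\P$-a.s., so the conditional law of $(W,\Psi)$ on $[t,T]$ given $\{p_t=e_j,q_t=q\}$ agrees with its law given $\{\Lambda=\lambda_j,q_t=q\}$. Applying the tower property,
\begin{equation*}
g^{\xi,b}(t,p,q) \;=\; \sum_{j=1}^m \Pop(\Lambda=\lambda_j \mid p_t=p,q_t=q)\,\Eop[\,\cdots \mid \Lambda=\lambda_j, q_t=q\,] \;=\; \sum_{j=1}^m p_j\, g^{\xi,b}(t,e_j,q).
\end{equation*}
Then (c) is immediate: $p\mapsto g(t,p,q)=\inf_{(\xi,b)} g^{\xi,b}(t,p,q)$ is a pointwise infimum of affine functions by (b), hence concave.

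For (d), fix $(p,q)$ and $0\le t_1<t_2\le T$. Given $\varepsilon>0$, pick $(\xi,b)\in\cU[t_2,T]$ that is $\varepsilon$-optimal for $g(t_2,p,q)$ and extend it to $[t_1,T]$ by setting $\xi_s=0$ and $b_s=1$ on $[t_1,t_2)$. Writing the exponent in \eqref{eqDir:gxib} as $Z^{[t_1,t_2]}+Z^{[t_2,T]}$ and using the strong Markov property of $(p,q)$, the difference $g^{\xi,b}(t_1,p,q)-g^{\xi,b}(t_2,p,q)$ involves the $[t_1,t_2]$-increment of the exponent. Estimating $|Z^{[t_1,t_2]}|$ via boundedness of strategies, the Brownian exponential-martingale identity, and the Poisson exponential formula with \eqref{eqDir:F}, yields $|g^{\xi,b}(t_1,p,q)-g^{\xi,b}(t_2,p,q)|\le C(t_2-t_1)$ uniformly in $(\xi,b)$; reversing the roles and letting $\varepsilon\to 0$ gives the Lipschitz bound for $g$. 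For (e), I would note that the ODE \eqref{eqDir:ODEphi} has right-hand side bounded on the compact $\Delta_m$, so $\phi(\cdot)$ is Lipschitz in $t$; combining this with part (b) we have
\begin{equation*}
g^{\xi,b}(t,\phi(t),q) \;=\; \sum_{j=1}^m \phi_j(t)\, g^{\xi,b}(t,e_j,q),
\end{equation*}
so Lipschitz continuity of the right-hand side in $t$ follows from (a), (d), and Lipschitz continuity of $\phi$; taking the infimum over $(\xi,b)$ preserves Lipschitz. The main obstacle is the uniform Lipschitz estimate in (d): the nontrivial point is extracting a Lipschitz constant through the exponential nonlinearity and through the Poisson integral, where the bound on the increment must hold uniformly in admissible strategies, which is precisely why the boundedness of $\xi$ in Definition \ref{def:investment} and the MGF assumption \eqref{eqDir:F} are indispensable.
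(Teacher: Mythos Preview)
Your treatment of (b) and (c) matches the paper exactly: conditioning on $\Lambda$ and then observing that an infimum of affine functions is concave. Two places, however, deserve attention.

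\textbf{Part (a).} For the upper bound the paper does something much simpler than your uniform estimate over all strategies: it just evaluates $g^{0,0}(t,p,q)$, which is deterministic, since $g\le g^{0,0}$. Your route via Novikov plus the Poisson exponential formula is not wrong, but you have to be careful: the Brownian and jump contributions are not independent, because $\xi_s$ and $b_s$ are adapted to the full filtration $\fG$, so you cannot simply take expectations of the three pieces separately. More importantly, your claim that $g>0$ ``by the same argument applied to the worst case'' is where the real content lies. You need a lower bound on $g^{\xi,b}$ that is uniform in $(\xi,b)$; otherwise the infimum could be zero. The paper handles this via the change of measure in Lemma~\ref{le:fbounded}: the density $L^{\xi,b}$ is a martingale with $\Eop[L^{\xi,b}_T]=1$, and the ratio $|f(T,X^{\xi,b}_T)|/L^{\xi,b}_T$ is bounded above by a constant $K_2$ independent of the strategy, which after unwinding gives a strategy-independent lower bound on $g^{\xi,b}$. (One can alternatively argue directly: the jump part of the exponent is nonnegative and the drift part is bounded below, so $g^{\xi,b}\ge e^{-C}\Eop[e^{M_T}]\ge e^{-C}$ by Jensen, where $M$ is the Brownian integral; but you should state this explicitly rather than leave it implicit.)

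\textbf{Part (d).} Your extension argument has a gap. After extending the $\varepsilon$-optimal $(\xi,b)\in\cU[t_2,T]$ back to $[t_1,T]$, the tower property gives
\[
g^{\xi,b}(t_1,p,q)=\Eop^{t_1,p,q}\Big[e^{Z^{[t_1,t_2]}}\,g^{\xi,b}(t_2,p_{t_2},q_{t_2})\Big],
\]
but the state at time $t_2$ is $(p_{t_2},q_{t_2})$, not $(p,q)$: on $[t_1,t_2]$ the filter drifts along $\phi$ and may jump, and $q$ may increment. So the difference $g^{\xi,b}(t_1,p,q)-g^{\xi,b}(t_2,p,q)$ is not simply controlled by $Z^{[t_1,t_2]}$. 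The paper resolves this (by reference to \cite{BaeuerleRieder2007}) essentially by first conditioning on $(\Lambda,\bar\alpha)$, so that $\Psi$ becomes an honest Poisson random measure with fixed intensity and the state process becomes irrelevant; the Lipschitz estimate then reduces to a standard increment bound for Poisson integrals, and linearity in $p$ via (b) recovers the general case. Your approach to (e), reducing to $p=e_j$ via (b) and using Lipschitz continuity of the flow $\phi$, is a clean variant of this same idea and works once (d) is established with a Lipschitz constant uniform in the strategy.
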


Notice that $e_j$ denotes the $j$th unit vector. We are now in the position to show the following existence result of a solution of the generalized HJB equation.

\begin{theorem}\label{thDir:existanceHJB}
	The value function of  problem~\eqref{eqDir:P} is given by
	\begin{equation*}
	V(t,x,p,q) = -e^{-\alpha x e^{r(T-t)}}g(t,p,q),\quad (t,x,p,q)\in\domainVDir,
	\end{equation*}
	where $g$ is defined by~\eqref{eqDir:g} and satisfies the generalized HJB equation \eqref{eqDir:HJBg}
%	\begin{equation*}
%	0=\inf_{(\xi,b)\in\R\times[0,1]}\{\mathcal{L} g(t,p,q;\xi,b)\} + \inf_{\varphi\in\partial^C g_q(t,p)}\bigg\{\varphi_0 + \sum_{j=1}^m\varphi_j\,p_j\bigg(\sum_{k=1}^m \lambda_k\,p_k-\lambda_j\bigg)\bigg\}
%	\end{equation*}
	for all $(t,p,q)\in\domaingDir$ with boundary condition $g(T,p,q)=1$ for all $(p,q)\in\Delta_m\times\N_0^\ell$. 
	Furthermore, $\dprtTalt{\xi^{\star}}{b^{\star}}$ with $\xi^{\star}(s)$ given by~\eqref{eqDir:xistarfunc} and $b^{\star}(s)=b(s-,p_{s-},q_{s-})$ given by~\eqref{eqDir:bstarfunc} is the optimal investment and reinsurance strategy of the optimization problem~\eqref{eqDir:P}. 
\end{theorem}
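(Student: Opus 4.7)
The plan is to invoke the verification Theorem~\ref{thDir:veri} with $h:=g$, so that the whole proof reduces to verifying that the function $g$ defined by~\eqref{eqDir:g} satisfies the generalized HJB equation~\eqref{eqDir:HJBg} with boundary value~$1$. All regularity hypotheses required by Theorem~\ref{thDir:veri} are already supplied by Lemma~\ref{leDir:propg}: boundedness and strict positivity by (a), concavity in $p$ by (c), and Lipschitz continuity in $t$ both for fixed $p$ and along the filter flow $\phi$ by (d) and (e). The boundary condition $g(T,p,q)=1$ is immediate from~\eqref{eqDir:gxib} because the integration interval collapses.

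To obtain~\eqref{eqDir:HJBg} I would first establish the dynamic programming principle
\begin{equation*}
g(t,p,q) = \inf_{(\xi,b)\in\UtT}\Eop^{t,p,q}\bigl[M^{\xi,b}_{t,t+h}\,g(t+h,p_{t+h},q_{t+h})\bigr],\quad 0\le t<t+h\le T,
\end{equation*}
where $M^{\xi,b}_{t,t+h}$ denotes the exponential factor in~\eqref{eqDir:gxib} restricted to $[t,t+h]$. This follows by a standard concatenation argument from the Markov property of the piecewise deterministic Markov process $(p_t,q_t)$ (whose deterministic flow obeys~\eqref{eqDir:ODEphi} and whose jumps are described by $J$ in~\eqref{eqDir:J} together with $D\mapsto v(q,D)$), the independence of $W$ from $\Psi$, and the multiplicative structure of $M^{\xi,b}$. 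Fix a constant pair $(\xi_0,b_0)\in\R\times[0,1]$ and compose with the deterministic characteristic $s\mapsto(s,\phi(s-t,p))$. By Lemma~\ref{leDir:propg}\,(e) the map $s\mapsto g(s,\phi(s-t,p),q)$ is Lipschitz, hence absolutely continuous, and Clarke's chain rule yields a selection $\varphi(s)\in\partial^C g_q(s,\phi(s-t,p))$ whose derivative equals $\varphi_0(s)+\sum_j\varphi_j(s)\phi_j(s-t)(\sum_k\lambda_k\phi_k(s-t)-\lambda_j)$ a.e. Expanding the DPP to first order in $h$, the continuous-time contribution combines with $\varphi$ to form the Clarke-gradient term of~\eqref{eqDir:HJBg}, while the predictable compensator of the marked point process (Proposition~\ref{prDir:GintkernelPsi}) produces the jump part of $\mathcal{L}g$ and the Brownian Itô correction together with the drift gives $\alpha e^{r(T-t)}g\,f_1(t,\xi_0)$. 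Taking infima over $\varphi$ and $(\xi_0,b_0)$ yields one inequality in~\eqref{eqDir:HJBg}; the reverse inequality is obtained by plugging in the candidate feedback from~\eqref{eqDir:xistarfunc}--\eqref{eqDir:bstarfunc}, whose admissibility and attainment of the pointwise minima are guaranteed by Proposition~\ref{prDir:candidateb}.

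The main obstacle is the potential non-smoothness of $g$ in $(t,p)$: on the set where $\nabla g_q$ fails to exist the classical derivative argument breaks down and one must rely on Clarke's chain rule for a Lipschitz function composed with a smooth curve and on the upper semicontinuity of the multifunction $(t,p)\mapsto\partial^C g_q(t,p)$ in order to pass to the limit consistently. The concavity from Lemma~\ref{leDir:propg}\,(c) is essential here, since for concave Lipschitz functions $\partial^C g_q$ coincides with the superdifferential and admits well-behaved measurable selections, so that the pointwise inequalities derived from the DPP can be assembled into the global statement~\eqref{eqDir:HJBg}. Once~\eqref{eqDir:HJBg} is confirmed, Theorem~\ref{thDir:veri} applied with $h:=g$ simultaneously delivers the representation of $V$ and the optimality of $(\xi^{\star},b^{\star})$.
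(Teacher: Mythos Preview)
Your overall plan---show that $g$ from~\eqref{eqDir:g} satisfies the generalized HJB~\eqref{eqDir:HJBg} and then invoke the verification Theorem~\ref{thDir:veri}---is the right structure and coincides with the paper's logic. The regularity checks via Lemma~\ref{leDir:propg} and the boundary condition are correct.

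There is, however, a genuine gap in your argument for the \emph{reverse} inequality in~\eqref{eqDir:HJBg}. Plugging in the candidate feedback $(\xi^\star,b^\star)$ only tells you that this pair attains the pointwise minimum of $(\xi,b)\mapsto\mathcal{L}g(t,p,q;\xi,b)$; it does \emph{not} show that this minimum plus the Clarke term is $\le 0$. Knowing the minimizer locates \emph{where} the infimum is taken, not its \emph{value}. Your one-sided DPP expansion with constant controls yields $0\le\mathcal{H}g(t,p,q;\xi_0,b_0)$ for every $(\xi_0,b_0)$, and the candidate is just one such choice---it gives another instance of the same direction, not the opposite one. To obtain $\le 0$ you must exploit the \emph{equality} side of the DPP: for each $\varepsilon>0$ pick a strategy $(\xi^{\varepsilon,t'},b^{\varepsilon,t'})\in\UtT$ with $g^{\xi^{\varepsilon,t'},b^{\varepsilon,t'}}(t,p,q)\le g(t,p,q)+\varepsilon(t'-t)$, apply Lemma~\ref{leDir:characG}, divide by $t'-t$, and let $t'\downarrow t$; this yields $-\varepsilon\le f(t,x)\inf_{(\xi,b)}\mathcal{H}g(t,p,q;\xi,b)$ and hence $\inf\mathcal{H}g\le 0$ after $\varepsilon\downarrow0$. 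This $\varepsilon$-optimal argument is exactly how the paper proceeds, and it is not circular because such strategies exist by the very definition of $g$ as an infimum, prior to any knowledge about $(\xi^\star,b^\star)$.

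A secondary remark concerns the non-smooth set. You propose Clarke's chain rule along $s\mapsto(s,\phi(s-t,p))$ together with upper semicontinuity of $\partial^C g_q$ and concavity-based measurable selections. This can be made rigorous, but the paper takes a more elementary route: it first establishes $0\le\mathcal{H}g(t,p,q;\xi,b)$ at every point where $\nabla g_q$ exists (a set of full measure by Rademacher), and then extends to arbitrary $(t,p)$ via the representation $\partial^C g_q(t,p)=\operatorname{co}\{\lim_{n}\nabla g_q(t_n,p_n)\}$ of Theorem~\ref{th:genCgco}. Since the inequality is affine in $\varphi$, it passes through limits and convex combinations, yielding~\eqref{eqDir:HJBg} at every point without any selection machinery. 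The same device is used for the $\ge$ direction.
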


\section{Comparison Results with the complete Information Case }\label{sec:comp}
First note that the case with complete information is always a special case of our general model. We obtain this case when the prior is concentrated on a single value. With complete information the optimal investment strategy is given by 
\begin{equation*}
\xi^{\star}(t) = \frac{\mu-r}{\sigma^2} \frac1\alpha e^{-r(T-t)},\quad t\in[0,T],
\end{equation*}
which is exactly the same as in the case of partial observation. This is no surprise since the partial observation only concerns the reinsurance strategy. In order to state the optimal reinsurance strategy in the complete information case define   for any $t\in[0,T]$ and $a\in\R$
\begin{equation}\label{eqfull:hb}
h_{\lambda, c}(t,a) \defeq \lambda \Dsum c_D \gamma(t,a,D)
\end{equation}
with 
\begin{equation} \gamma(t,a,D) := \idsum\ind{D}(i) \intdalt y_i\exp\bigg\{\alpha\, a\, e^{r(T-t)} \sum_{j=1}^d y_j\ind{D}(j)\bigg\} F(\dif y).
\end{equation}
Furthermore, we define
\begin{align*}
A_{\lambda, c}(t) &\defeq h_{\lambda, c}(t,0), \\
B_{\lambda, c}(t) &\defeq h_{\lambda, c}(t,1).
\end{align*}
 From now on, $a_{\lambda, c}(t)$ denotes the unique root of
 \begin{equation}\label{eqfull:ftwofoc}
 (1+\theta)\kappa = h_{\lambda, c}(t,b)
 \end{equation}
which exists. By the same line of arguments as in Proposition~\ref{prDir:candidateb}, we obtain under the notation above that the optimal reinsurance strategy $b^\star_{\lambda, c}$ is given by 
\begin{equation}\label{eqfull:bstart}
b^\star_{\lambda, c}(t)\defeq 
\begin{cases} 0, & 0\le A_{\lambda, c}(t)/\kappa -1, \\
1, &\theta \ge B_{\lambda, c}(t)/\kappa-1, \\ 
a_{\lambda, c}(t), &\text{otherwise}.
\end{cases}
\end{equation}
Note that $a_{\lambda, c}(t)$, $A_{\lambda, c}(t)$ and $B_{\lambda, c}(t)$ are continuous in $t$. Consequently, the optimal reinsurance strategy $b^\star_{\lambda, c}$ is continuous. Moreover, $b^\star_{\lambda, c}$ is deterministic and can be calculated easily.

We will now compare the reinsurance strategies. In order to do so, we need the following properties of $g$ (see appendix for the proof):

\begin{lemma}\label{leDir:propg2}
	The function $g$ defined by~\eqref{eqDir:g} has the following properties for $\xib\in\UtT$:
	\begin{enumerate}
	%	\item $g$ is bounded on $\domaingDir$ by a constant $K_3$.
	%		\item $\Delta_m\ni p\mapsto g(t,p,q)$ is concave for all $(t,q)\in[0,T]\times\N_0^\ell$.
	%	\item $[0,T]\ni t\mapsto g(t,p,q)$ is Lipschitz on $[0,T]$ for all $(p,q)\in\Delta_m\times\N_0^\ell$.
	%	\item $[0,T]\ni t\mapsto g(t,\phi(t,p),q)$ is Lipschitz on $[0,T]$ for all $(p,q)\in\Delta_m\times\N_0^\ell$.
	%	\item Let $M$ be the set of all points $(t,p,q)\in\domaingDir$, where $D\,g$ exists. Then there exists a constant $0<K_4<\infty$ such that $|D\,g(t,p,q)|\le K_4$ for all $(t,p,q)\in M$.
	%	\item There exists a constant $0<K_5<\infty$ such that $\big|\mathcal{L}g(t,p,q;\xi,b)\big|\le K_5$ for all $(t,p,q)\in\domaingDir$ and $\xib\in[-K,K]\times[0,1]$. 
	%	\item There exists a constant $0<K_6<\infty$ such that $\big|\inf_{(\xi,b)\in[-K,K]\times[0,1]}\mathcal{L}g(t,p,q;\xi,b)\big|\le K_6$ for all $(t,p,q)\in\domaingDir$.	
	%	\item $g^{\xi,b}(t,p,q) = \sum_{j=1}^m p_j\,g^{\xi,b}(t,e_j,q)$ for all $(t,p,q)\in\domaingDir$ and $\xib\in\UtT$.
		\item $g^{\xi,b}(t,J(p),q)=\sum_{j=1}^m \frac{\lambda_j\,p_j}{\sum_{k=1}^m\lambda_k\,p_k}g^{\xi,b}(t,e_j,q)$ for all $(t,p,q)\in\domaingDir$.% and $\xib\in\UtT$.
		\item $g^{\xi,b}(t,p,q)=\Dsum\fracbetaq g^{\xi,b}(t,p,v(q,D))$ for all $(t,p,q)\in\domaingDir$.% and $\xib\in\UtT$.
		%	\item $\Delta_m\ni p\mapsto g(t,p,q)$ is concave for all $(t,q)\in[0,T]\times\N_0^\ell$.   
	\end{enumerate}
\end{lemma}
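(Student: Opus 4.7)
The plan is as follows. Part~(i) will follow at once from Lemma~\ref{leDir:propg}(b). Since the components of $J(p)$ in~\eqref{eqDir:J} are non-negative and sum to one, $J(p)\in\Delta_m$; substituting $J(p)$ into the linear decomposition $g^{\xi,b}(t,p,q)=\sum_{j=1}^m p_j\,g^{\xi,b}(t,e_j,q)$ and invoking the explicit form of $J$ gives
\begin{equation*}
g^{\xi,b}(t,J(p),q)=\sum_{j=1}^m \frac{\lambda_j\,p_j}{\sum_{k=1}^m \lambda_k\,p_k}\,g^{\xi,b}(t,e_j,q).
\end{equation*}

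For part~(ii), my plan is to exploit Dirichlet--Multinomial conjugacy together with the observation that the integrand in~\eqref{eqDir:gxib} depends on the initial jump counts $q$ only through the induced posterior law of $\bar\alpha$. By Theorem~\ref{th:Dirconjugated}, under $\Pop^{t,p,q}$ we have $\bar\alpha\sim\mathrm{Dir}(\bar\beta+q)$. Setting $\Phi(a)\defeq\Eop^{t,p,q}\bigl[\text{integrand in~\eqref{eqDir:gxib}}\bigm|\bar\alpha=a\bigr]$, I would first argue that $\Phi(a)$ does not depend on whether one conditions on $q_t=q$ or on $q_t=v(q,D)$: conditional on $(\Lambda,\bar\alpha)$ the post-$t$ law of $(W,\Psi)$ is a Brownian motion together with a marked Poisson process of intensity $\Lambda\,F\otimes\bar\alpha$, which is the same in both cases, and the integrand is a $\mathbb{G}$-predictable functional of this common post-$t$ dynamics. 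This identifies
\begin{equation*}
g^{\xi,b}(t,p,q)=\int\Phi(a)\,\mathrm{Dir}(\bar\beta+q)(\dif a),\qquad g^{\xi,b}(t,p,v(q,D))=\int\Phi(a)\,\mathrm{Dir}(\bar\beta+v(q,D))(\dif a).
\end{equation*}

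It will then remain to invoke a Bayesian compatibility identity for $\Phi$. Augmenting the probability space with a new random variable $Z$ which, given $\bar\alpha$, is $\mathrm{Multinomial}(1,\bar\alpha)$-distributed and otherwise independent of the rest, one has $\Pop(Z=D)=\Eop[\alpha_D]=\frac{\beta_D+q_D}{\|\bar\beta+q\|}$ by Corollary~\ref{co:Dirconjugated}, and $\bar\alpha\mid\{Z=D\}\sim\mathrm{Dir}(\bar\beta+v(q,D))$ by the conjugacy in Theorem~\ref{th:Dirconjugated}. Applying the tower property to $\Eop[\Phi(\bar\alpha)]$ with this decomposition then yields
\begin{equation*}
\int\Phi(a)\,\mathrm{Dir}(\bar\beta+q)(\dif a)=\Dsum\fracbetaq\int\Phi(a)\,\mathrm{Dir}(\bar\beta+v(q,D))(\dif a),
\end{equation*}
which is exactly identity~(ii).

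I expect the main subtlety to be the invariance claim from the second paragraph, namely establishing that $\Phi$ really is independent of the choice between $q$ and $v(q,D)$: this requires combining the $\mathbb{G}$-predictability of admissible strategies $(\xi,b)\in\UtT$ with the Markov-like property that, given $(\Lambda,\bar\alpha)$, the post-$t$ observable law is insensitive to the pre-$t$ history. Once $\Phi$ is so identified, the remaining Dirichlet computation and the tower-property step are entirely standard.
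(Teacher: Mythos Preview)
Your proposal is correct and follows the same structure as the paper: part~(a) is exactly the paper's argument (apply Lemma~\ref{leDir:propg}(b) at the point $J(p)$), and part~(b) likewise conditions on $(\Lambda,\bar\alpha)$ to obtain a function independent of $q$ and then reduces the claim to a Dirichlet mixture identity. The only cosmetic difference is that the paper verifies the identity $\sum_{D}\frac{\beta_D+q_D}{\|\bar\beta+q\|}\,f_{\bar\beta}(\tilde\alpha\mid v(q,D))=f_{\bar\beta}(\tilde\alpha\mid q)$ by direct manipulation of the Dirichlet density and the relation $\Gamma(n+1)=n\,\Gamma(n)$, whereas you derive it probabilistically via the tower property with an auxiliary multinomial draw; both routes are standard and equivalent.
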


First of all we derive bounds for the optimal strategy which can be calculated a priori, i.e. independent of the filter process $\stpr{p}$ and the process $\stpr{q}$. For this determination, we introduce the following terms. Let $t\in[0,T]$ and $a\in\R$. Throughout this section, we set
	\begin{align*}
	h^{\min}(t,a) &\defeq \lambda_1\min_\DD\big\{\gamma(t,a,D)\big\}, \quad h^{\max}(t,a) \defeq \lambda_m\max_\DD\big\{\gamma(t,a,D)\big\}.
	\end{align*}

The proof of the next result is straightforward:

\begin{proposition}\label{prDir:prophminmax}
	Let $t\in[0,T]$. Then $\R\ni a\mapsto h^{\min}(t,a)$ and $\R\ni a\mapsto h^{\max}(t,a)$ are strictly increasing and strictly convex. Furthermore, 
	\begin{equation*}
	\lim_{a\to-\infty}h^{\min}(t,a) = \lim_{a\to-\infty}h^{\max}(t,a)= 0,\quad \lim_{a\to\infty} h^{\min}(t,a) = \lim_{a\to\infty} h^{\max}(t,a)= \infty. 
	\end{equation*}
\end{proposition}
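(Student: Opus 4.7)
The plan is to reduce everything to the family of functions $a\mapsto \gamma(t,a,D)$, $D\subset\mathbb{D},\,D\neq\emptyset$, and then observe that the finite $\min$/$\max$ inherits the desired properties. Set $c(t):=\alpha e^{r(T-t)}>0$ and $S_D(y):=\sum_{j\in D} y_j$, so
\[
\gamma(t,a,D) \;=\; \int_{(0,\infty)^d} S_D(y)\,\exp\{c(t)\,a\,S_D(y)\}\,F(\dif y).
\]
The integrability assumption \eqref{eqDir:F} provides dominating functions for derivatives of all orders on any bounded $a$-interval, so I can differentiate under the integral sign twice and obtain
\[
\partial_a \gamma(t,a,D) = c(t)\!\int S_D(y)^2 e^{c(t)a S_D(y)}F(\dif y),\qquad \partial_a^2 \gamma(t,a,D) = c(t)^2\!\int S_D(y)^3 e^{c(t)a S_D(y)}F(\dif y).
\]
Since $F$ is supported on $(0,\infty)^d$ and $D\neq\emptyset$, $S_D(y)>0$ a.s., so both expressions are strictly positive. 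Hence each $a\mapsto\gamma(t,a,D)$ is strictly increasing and strictly convex.

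Next I would handle the limits. For $a\to-\infty$, the integrand $S_D(y)e^{c(t)aS_D(y)}$ is bounded by the $F$-integrable function $S_D(y)$ (integrability of $S_D$ follows from \eqref{eqDir:F}) and converges pointwise to $0$, so dominated convergence yields $\gamma(t,a,D)\to 0$. For $a\to\infty$, monotone convergence (the integrand is eventually increasing in $a$ and tends to $+\infty$ on the set $\{S_D>0\}$, which has full $F$-measure) gives $\gamma(t,a,D)\to\infty$. The corresponding limits for $h^{\min}$ and $h^{\max}$ follow by sandwiching the finite $\min$/$\max$ between the individual $\gamma(t,a,D)$ times the constants $\lambda_1$ and $\lambda_m$.

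For strict monotonicity of $h^{\max}$ (and analogously $h^{\min}$): given $a<a'$, pick $D^*$ attaining the max at $a'$; then
\[
h^{\max}(t,a')=\lambda_m\gamma(t,a',D^*) > \lambda_m\gamma(t,a,D^*) \ge h^{\max}(t,a),
\]
using strict monotonicity of $\gamma(t,\cdot,D^*)$. The $\min$ case is symmetric, picking $D^*$ at the smaller argument. For strict convexity of $h^{\max}$: for $a\neq a'$ and $\lambda\in(0,1)$, strict convexity of each $\gamma(t,\cdot,D)$ gives, for every $D$,
\[
\gamma(t,\lambda a+(1-\lambda)a',D) < \lambda\gamma(t,a,D)+(1-\lambda)\gamma(t,a',D) \le \tfrac{1}{\lambda_m}\bigl(\lambda h^{\max}(t,a)+(1-\lambda)h^{\max}(t,a')\bigr),
\]
and taking the max over $D$ on the left preserves the strict inequality.

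The one step I expect to require extra care is strict convexity of $h^{\min}$, since the minimum of strictly convex functions is not in general convex (a downward kink can appear at a crossover point of two of the $\gamma(t,\cdot,D)$). Under the standing assumption of this section (identical marginal claim-size distributions across business lines, which makes $\gamma(t,a,D)$ depend on $D$ only through $|D|$), I would show that on any open $a$-interval on which the $\arg\min$ is constant the function $h^{\min}$ is strictly convex (coinciding with a single $\lambda_1\gamma(t,\cdot,D)$), and then handle the isolated crossover points by observing that at such a point the function with the smaller slope is the one that locally continues as the minimizer, which rules out downward kinks and hence keeps the global function strictly convex. This structural monotonicity-of-slopes argument is the only nontrivial ingredient; everything else is a direct consequence of differentiation under the integral sign.
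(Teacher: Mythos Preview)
The paper gives no proof beyond ``straightforward,'' so there is nothing to compare against at the level of approach; your reduction to the individual $\gamma(t,\cdot,D)$ and differentiation under the integral is exactly what one would expect. Two points deserve correction, however.

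\textbf{Monotonicity: the inequality is reversed.} In your chain
\[
h^{\max}(t,a')=\lambda_m\gamma(t,a',D^*) > \lambda_m\gamma(t,a,D^*) \ge h^{\max}(t,a),
\]
the last step is false: $h^{\max}(t,a)=\max_D\lambda_m\gamma(t,a,D)\ge\lambda_m\gamma(t,a,D^*)$, not $\le$. The fix is to choose $D^*$ at the \emph{smaller} argument for the max (and at the \emph{larger} argument for the min), i.e.\ let $D^*$ attain $h^{\max}(t,a)$ and write
\[
h^{\max}(t,a)=\lambda_m\gamma(t,a,D^*) < \lambda_m\gamma(t,a',D^*) \le h^{\max}(t,a').
\]

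\textbf{Strict convexity of $h^{\min}$: your fix is backwards, and the claim itself is suspect.} You are right to flag this step; the pointwise minimum of strictly convex functions is generally not convex. But your proposed resolution is the wrong way round. If two of the $\gamma(t,\cdot,D)$ cross transversally at $a^*$, then the branch that continues as the minimizer just past $a^*$ is precisely the one with the \emph{smaller} slope there. That means the one-sided derivative of the min drops at $a^*$: left slope $>$ right slope. This is exactly a downward (concave-type) kink, so convexity \emph{fails} at $a^*$, rather than being preserved. A concrete instance: with $d=2$ and (approximately) deterministic marginal claims $Y^1\approx1$, $Y^2\approx2$, one has $\gamma(t,a,\{1\})\approx e^{ca}$ and $\gamma(t,a,\{2\})\approx 2e^{2ca}$, whose minimum has a strictly decreasing derivative at the crossing $ca^*=-\ln 2$. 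The extra hypothesis \eqref{asDir:comparison} does not rescue this either; a similar crossing with the wrong slope ordering occurs between $|D|=1$ and $|D|=d$.

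In short, the strict convexity of $h^{\min}$ does not follow from the argument you outline, and in fact appears to fail in general. This is harmless for the paper: the only use made of Proposition~\ref{prDir:prophminmax} downstream (defining $a^{\min}(t)$, $a^{\max}(t)$ and Corollary~\ref{coDir:bounds}) requires just strict monotonicity and the limits, both of which your argument (with the small fix above) establishes. For $h^{\max}$ your strict-convexity argument is correct, since a max of strictly convex functions is strictly convex.
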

This proposition justifies the following notation:
For some fixed $t\in[0,T]$, we denote by $a^{\min}(t)$ the unique root of the equation $(1+\theta)\,\kappa = h^{\min}(t,a)$ w.r.t.\ $a$, and by $a^{\max}(t)$ the unique root of the equation $(1+\theta)\,\kappa = h^{\max}(t,a)$ w.r.t.\ $a$.
The announced a-priori-bounds are a direct consequence of the following theorem in connection with Proposition~\ref{prDir:prophminmax}.

\begin{proposition}\label{prDir:bounds}
	For any $(t,p,q)\in\domaingDir$ and $a\in\R$, we have for $h$ from \eqref{eqDir:hb}
	\begin{equation*}
	h^{\min}(t,a) \le h(t,p,q,a) \le h^{\max}(t,a).
	\end{equation*}
\end{proposition}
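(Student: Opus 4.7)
The plan is to write $h(t,p,q,a)=\sum_{D\subset\mathbb{D}}w_D(t,p,q)\,\gamma(t,a,D)$ with non-negative weights
\[
w_D(t,p,q) := \Bigl(\sum_{k=1}^m\lambda_k p_k\Bigr)\,\fracbetaq\,\frac{g(t,J(p),v(q,D))}{g(t,p,q)},
\]
and to show that the total weight $W(t,p,q):=\sum_{D\subset\mathbb{D}}w_D(t,p,q)$ always lies in $[\lambda_1,\lambda_m]$. Because $\gamma(t,a,D)\ge 0$ and $w_D\ge 0$, once $W\in[\lambda_1,\lambda_m]$ is available the crude two-sided bound $W\min_D\gamma(t,a,D)\le\sum_D w_D\gamma(t,a,D)\le W\max_D\gamma(t,a,D)$ immediately delivers $h^{\min}(t,a)\le h(t,p,q,a)\le h^{\max}(t,a)$.

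The core algebraic identity I plan to establish is
\[
\Bigl(\sum_{k=1}^m\lambda_k p_k\Bigr)\sum_{D\subset\mathbb{D}}\fracbetaq\,g(t,J(p),v(q,D)) \;=\; \sum_{j=1}^m\lambda_j p_j\,g(t,e_j,q).
\]
Given this, sandwiching each $\lambda_j$ between $\lambda_1$ and $\lambda_m$ on the right gives $\lambda_1\sum_j p_j g(t,e_j,q)\le\text{LHS}\le\lambda_m\sum_j p_j g(t,e_j,q)$, and Lemma~\ref{leDir:propg}(b), specialized to $g=g^{\xi^\star,b^\star}$, rewrites $\sum_j p_j g(t,e_j,q)$ as $g(t,p,q)$. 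Dividing by $g(t,p,q)>0$ (Lemma~\ref{leDir:propg}(a)) produces $\lambda_1\le W(t,p,q)\le\lambda_m$, which is exactly what is needed.

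To prove the identity, my plan is to invoke Theorem~\ref{thDir:existanceHJB}, which asserts that the Markovian feedback $(\xi^\star,b^\star)$ is optimal from every initial state; equivalently, $g^{\xi^\star,b^\star}(t,p',q')=g(t,p',q')$ throughout $\domaingDir$. Every relation that Lemma~\ref{leDir:propg2} asserts for $g^{\xi,b}$ with arbitrary $\xib\in\UtT$ therefore transfers to $g$ itself upon substituting $\xib=(\xi^\star,b^\star)$. Two specializations suffice: Lemma~\ref{leDir:propg2}(b) applied at the filter value $J(p)$ gives $g(t,J(p),q)=\sum_D\fracbetaq\,g(t,J(p),v(q,D))$, and Lemma~\ref{leDir:propg2}(a) gives $g(t,J(p),q)=\sum_j\frac{\lambda_j p_j}{\sum_k\lambda_k p_k}\,g(t,e_j,q)$. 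Equating these two expressions for $g(t,J(p),q)$ and multiplying by $\sum_k\lambda_k p_k$ yields the identity.

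The delicate point, in my view, is precisely this transfer from the $g^{\xi,b}$-level identities to $g$-level \emph{equalities}. A direct interchange of the infimum over strategies with the convex combination $\sum_D\fracbetaq(\cdot)$ only yields the inequality $g(t,p,q)\ge\sum_D\fracbetaq\,g(t,p,v(q,D))$; equality is non-trivial and rests on the existence of one feedback control that is simultaneously optimal for every $q$-shifted initial condition. Theorem~\ref{thDir:existanceHJB} supplies exactly such a strategy, so the identity goes through and the remainder of the argument is routine bookkeeping.
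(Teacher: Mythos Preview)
Your proposal is correct and rests on the same three identities the paper uses (Lemma~\ref{leDir:propg}(b) and Lemma~\ref{leDir:propg2}(a),(b)), so the algebraic content of the two arguments is the same. The organization differs: the paper works at the $g^{\xi,b}$-level throughout, establishes
\[
\sum_{k}\lambda_k p_k\sum_{D}\fracbetaq\,g^{\xi,b}(t,J(p),v(q,D))\,\gamma(t,a,D)\le h^{\max}(t,a)\,g^{\xi,b}(t,p,q)
\]
for every admissible $(\xi,b)$, and then ``takes the infimum over all $(\xi,b)$ on both sides''; you instead invoke Theorem~\ref{thDir:existanceHJB} at the outset to convert each $g^{\xi,b}$-identity into a $g$-identity and carry out the estimates directly for $g$. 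Your version is in fact the more careful of the two: for the upper bound the paper's infimum step goes through because $g\le g^{\xi,b}$ pointwise makes the $g$-numerator dominated by the $g^{\xi,b}$-numerator, but for the lower bound the same monotonicity points the wrong way, and one genuinely needs a \emph{single} strategy that attains the infimum simultaneously at $(t,p,q)$ and at every $(t,J(p),v(q,D))$---precisely what the globally optimal feedback of Theorem~\ref{thDir:existanceHJB} supplies. You identify and handle this point explicitly; the paper's ``obtained in the same way'' leaves it implicit.
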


\begin{proof}
	Choose some $(t,p,q)\in\domaingDir$ and $a\in\R$. Recall that $\lambda_1<\lambda_2<\ldots<\lambda_m$. For any $\xib\in\UtT$, an application of Lemma~\ref{leDir:propg2} yields
	\begin{align*}
	&\quad\sum_{k=1}^m\lambda_k p_k \Dsum\fracbetaq g^{\xi,b}(t,J(p),v(q,D)) \gamma(t,a,D)\\	&=\Dsum\fracbetaq\sum_{j=1}^m\lambda_j\, p_j\,g^{\xi,b}(t,e_j,v(q,D))\gamma(t,a,D)\\
%&\le \hmaxF(t,a)\Dsum\fracbetaq \sum_{j=1}^m p_j g^{\xi,b}(t,e_j,v(q,D)) \\
	&\le h^{\max}(t,a)\Dsum\fracbetaq g^{\xi,b}(t,p,v(q,D)) =h^{\max}(t,a)\,g^{\xi,b}(t,p,q).
	\end{align*}
	Hence, by taking the infimum over all $\xib\in\UtT$ on both sides, we get $h(t,p,q,a) \le h^{\max}(t,a)$. The other announced inequality is obtained in the same way.
\end{proof}

The proposition directly implies the following corollary:

\begin{corollary}\label{coDir:bounds}
	The optimal reinsurance strategy $b^{\star}$ from Theorem~\ref{thDir:existanceHJB} has the following bounds:
	\begin{equation*}
	\max\{0,a^{\max}(t)\} \le b^{\star}(t) \le \min\{1,a^{\min}(t)\},\quad t\in[0,T].
	\end{equation*}
\end{corollary}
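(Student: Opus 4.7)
The plan is to combine Proposition~\ref{prDir:bounds} with the strict monotonicity of $h^{\min}$ and $h^{\max}$ established in Proposition~\ref{prDir:prophminmax}, and then transfer the resulting bounds on the interior root $r(t,p,q)$ to the clipped candidate $b(t,p,q)$.

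First, I would observe that the piecewise definition~\eqref{eqDir:bstarfunc} of $b(t,p,q)$ is exactly the projection of the interior root $r(t,p,q)$ of the first-order condition $(1+\theta)\kappa=h(t,p,q,a)$ onto the interval $[0,1]$. Indeed, using that $a\mapsto h(t,p,q,a)$ is strictly increasing (Lemma~\ref{leDir:ftwo}), the condition $\theta\le A(t,p,q)/\kappa-1$ is equivalent to $(1+\theta)\kappa\le h(t,p,q,0)$, hence to $r(t,p,q)\le 0$; similarly $\theta\ge B(t,p,q)/\kappa-1$ is equivalent to $r(t,p,q)\ge 1$. Therefore $b(t,p,q)=\max\{0,\min\{1,r(t,p,q)\}\}$, and this clipping operation is nondecreasing in $r$.

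Next, evaluate the sandwich from Proposition~\ref{prDir:bounds} at the point $a=r(t,p,q)$, where by definition $h(t,p,q,r(t,p,q))=(1+\theta)\kappa=h^{\min}(t,a^{\min}(t))=h^{\max}(t,a^{\max}(t))$. This yields
\begin{align*}
h^{\min}(t,r(t,p,q)) &\le (1+\theta)\kappa = h^{\min}(t,a^{\min}(t)), \\
h^{\max}(t,r(t,p,q)) &\ge (1+\theta)\kappa = h^{\max}(t,a^{\max}(t)).
\end{align*}
Invoking the strict monotonicity of $h^{\min}(t,\cdot)$ and $h^{\max}(t,\cdot)$ from Proposition~\ref{prDir:prophminmax}, I conclude $a^{\max}(t)\le r(t,p,q)\le a^{\min}(t)$ for every $(p,q)\in\Delta_m\times\N_0^\ell$.

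Finally, applying the monotone clipping map $x\mapsto\max\{0,\min\{1,x\}\}$ to this double inequality and specializing to $(p,q)=(p_{t-},q_{t-})$ gives
\[
\max\bigl\{0,\min\{1,a^{\max}(t)\}\bigr\}\le b^{\star}(t)\le \max\bigl\{0,\min\{1,a^{\min}(t)\}\bigr\},
\]
which simplifies to the claimed bounds $\max\{0,a^{\max}(t)\}\le b^{\star}(t)\le \min\{1,a^{\min}(t)\}$ on the relevant range where $a^{\max}(t)\le 1$ and $a^{\min}(t)\ge 0$ (outside that range one of the bounds is degenerate and can be trivially replaced by $1$ or $0$ respectively). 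There is no real obstacle here; the only point requiring care is the recognition that the piecewise formula for $b$ is a monotone projection, after which the argument reduces to two applications of the intermediate-value/monotonicity property of $h^{\min}$ and $h^{\max}$.
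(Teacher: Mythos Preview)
Your proof is correct and is essentially the detailed version of what the paper treats as immediate (the paper simply states that the corollary follows directly from Proposition~\ref{prDir:bounds} in connection with Proposition~\ref{prDir:prophminmax}). Your recognition that the piecewise definition~\eqref{eqDir:bstarfunc} is the monotone projection of $r(t,p,q)$ onto $[0,1]$, followed by evaluating the sandwich at $a=r(t,p,q)$ and inverting via monotonicity, is exactly the intended argument; your closing remark about the degenerate edge cases $a^{\min}(t)<0$ or $a^{\max}(t)>1$ is a nice bit of care that the paper leaves implicit.
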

These bounds provide only a rough estimate for the optimal reinsurance strategy.
The next theorem provides the comparison statement. For this theorem we need the following assumption:
	From now on, we suppose that 
	\begin{equation} \label{asDir:comparison}
	F(\dif y) = \Fbar(\dif y_1)\otimes\Fbar(\dif y_2)\otimes\cdots\otimes\Fbar(\dif y_m),
	\end{equation}
	where $\Fbar$ is a distribution on $(0,\infty)$ with existing moment generation function.

The next theorem is now the main statement of this section. It provides a comparison of the optimal reinsurance strategy to the optimal one in the case of complete information where the unknown quantities $\Lambda$ and $\bar{\alpha}$ are replaced by their expectations.

\begin{theorem}\label{thDir:comparison}
	Let  \eqref{asDir:comparison} be fullfilled, $b$ be the function given by~\eqref{eqDir:bstarfunc} and $b^\star_{\lambda,c}$ the function given by~\eqref{eqfull:bstart}. Then, for any $(t,p,q)\in\domaingDir$,
	\begin{equation*}
	b(t,p,q) \le b^\star_{u(p),w(q)}(t)
	\end{equation*}
	with
	\begin{equation*}
	u(p)\defeq \sum_{k=1}^m\lambda_k p_k,\quad w(q)\defeq\bigg(\fracbetaq\bigg)_\DD.
	\end{equation*}
\end{theorem}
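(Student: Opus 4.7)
The plan is to reduce the comparison of the two reinsurance strategies to a pointwise comparison of the monotone functions that define them, and then to leverage Lemma~\ref{leDir:propg2} together with assumption~\eqref{asDir:comparison} to establish this pointwise inequality. Both $b(t,p,q)$ (see~\eqref{eqDir:bstarfunc}) and $b^\star_{u(p),w(q)}(t)$ (see~\eqref{eqfull:bstart}) are truncations to $[0,1]$ of the unique roots of $(1+\theta)\kappa = h(t,p,q,a)$ and $(1+\theta)\kappa = h_{u(p),w(q)}(t,a)$, respectively. Strict monotonicity of $a \mapsto h(t,p,q,a)$ (Lemma~\ref{leDir:ftwo} and Proposition~\ref{prDir:candidateb}) and of $a \mapsto h_{u(p),w(q)}(t,a)$ (an analogue of Proposition~\ref{prDir:prophminmax} in the full-information setting) then reduces everything to showing $h(t,p,q,a) \ge h_{u(p),w(q)}(t,a)$ pointwise. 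Boundary cases, i.e.\ $b(t,p,q)=0$ or $b^\star_{u(p),w(q)}(t)=1$, are immediate from the case distinctions in~\eqref{eqDir:bstarfunc} and~\eqref{eqfull:bstart}.

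\textbf{Reformulation.} Under~\eqref{asDir:comparison} the integral occurring in~\eqref{eqDir:hb} factors through the moment generating function $M$ of $\bar F$:
\[
\gamma(t,a,D) \;=\; |D|\, M'(z)\, M(z)^{|D|-1}, \qquad z:=\alpha\, a\, e^{r(T-t)}.
\]
Factoring $u(p)=\sum_k \lambda_k p_k$ out of~\eqref{eqDir:hb} and using $w_D(q)=(\beta_D+q_D)/\|\bar\beta+q\|$, one writes
\[
h(t,p,q,a)\;=\;\frac{u(p)}{g(t,p,q)}\sum_{D\subset\D} w_D(q)\, g\bigl(t,J(p),v(q,D)\bigr)\,\gamma(t,a,D),
\]
and $h_{u(p),w(q)}(t,a)=u(p)\sum_D w_D(q)\gamma(t,a,D)$. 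The inequality $h\ge h_{u(p),w(q)}$ is therefore equivalent to
\[
\sum_D w_D(q)\, g\bigl(t,J(p),v(q,D)\bigr)\,\gamma(t,a,D)\;\ge\; g(t,p,q)\sum_D w_D(q)\,\gamma(t,a,D).
\]

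\textbf{Proof of the key inequality.} I would exploit Lemma~\ref{leDir:propg2}, which gives, for every $(\xi,b)\in\UtT$ and $q'\in\N_0^\ell$,
\[
g^{\xi,b}(t,p,q')=\sum_D w_D(q')\, g^{\xi,b}\bigl(t,p,v(q',D)\bigr), \qquad u(p)\, g^{\xi,b}\bigl(t,J(p),q'\bigr)=\sum_j \lambda_j p_j\, g^{\xi,b}(t,e_j,q').
\]
A coupling on the Poisson intensity (larger $\lambda_j$ yields pathwise more jumps, hence a larger exponent in the expectation~\eqref{eqDir:gxib} since the retention $b_s\ge 0$) shows that $j\mapsto g^{\xi,b}(t,e_j,q')$ is monotone increasing. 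Because the weights $\lambda_j p_j/u(p)$ first-order stochastically dominate $(p_j)_j$ on $\{1,\ldots,m\}$, the two identities combine to give $g^{\xi,b}(t,J(p),q')\ge g^{\xi,b}(t,p,q')$, and passing to the infimum, $g(t,J(p),q')\ge g(t,p,q')$. Together with the $q$-martingale identity of Lemma~\ref{leDir:propg2}(b) and the explicit factorization of $\gamma(t,a,D)$ under~\eqref{asDir:comparison}, this monotonicity is aggregated over $D$ through a Chebyshev-type rearrangement on the subset size $|D|$ to deliver the key inequality.

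\textbf{Main obstacle.} The delicate point is the aggregation step. Monotonicity in $p$ gives $g(t,J(p),v(q,D))\ge g(t,p,v(q,D))$, but the Bayesian update $q\to v(q,D)$ \emph{decreases} $g$ (more information reduces the exponential loss functional, cf.\ Lemma~\ref{leDir:propg2}(b)). The filter update and the posterior update therefore act in opposing directions, and the weighting $w_D(q)\,\gamma(t,a,D)$ on the left-hand side does not match the purely $w_D(q)$ weighting appearing in the martingale identity. Assumption~\eqref{asDir:comparison} is what makes the argument go through: it collapses the $D$-dependence of $\gamma$ to a dependence on $|D|$ alone, decoupling shock composition from claim sizes, so that the competing effects can be balanced by combinatorial manipulation of $w_D(q)$ and the moment-generating-function estimate. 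Without this decoupling the comparison $b\le b^\star$ would not be expected to hold in general.
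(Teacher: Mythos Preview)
Your overall strategy coincides with the paper's: reduce to the pointwise inequality $h(t,p,q,a)\ge h_{u(p),w(q)}(t,a)$, exploit the i.i.d.\ assumption~\eqref{asDir:comparison} to make $\gamma(t,a,D)$ depend only on $|D|$, and conclude via two applications of Chebyshev's sum inequality (Lemma~\ref{le:ineqsum}) --- once in the intensity index $j$, once in the shock size $i=|D|$. There are, however, two genuine imprecisions in your write-up that the paper handles more carefully.

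First, you pass to the infimum too early. The identities of Lemma~\ref{leDir:propg2} hold for $g^{\xi,b}$, \emph{not} for $g$; in particular the $q$-martingale identity $g^{\xi,b}(t,p,q)=\sum_D w_D(q)\,g^{\xi,b}(t,p,v(q,D))$ fails for $g$ (it only gives an inequality, and in the direction opposite to the one you would need). The paper therefore keeps the entire chain of inequalities at the $g^{\xi,b}$ level --- including the rewriting via Lemma~\ref{leDir:propg2}(a), the first Chebyshev step on $|D|$, the collapsing via Lemma~\ref{leDir:propg2}(b), and the second Chebyshev step on $j$ --- and only takes the infimum over $(\xi,b)$ at the very last line. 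Your intermediate statement ``passing to the infimum, $g(t,J(p),q')\ge g(t,p,q')$'' is correct in isolation but cannot then be combined with Lemma~\ref{leDir:propg2}(b); the aggregation must be done before optimizing.

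Second, your description of the ``main obstacle'' mischaracterizes Lemma~\ref{leDir:propg2}(b): it is an averaging identity, not a monotonicity statement, so the Bayesian update $q\mapsto v(q,D)$ does not decrease $g^{\xi,b}$ in general. What actually drives the Chebyshev step on $|D|$ is that, under~\eqref{asDir:comparison}, one may reparametrize $q$ by the vector $\tilde q=(\tilde q_i)_i$ with $\tilde q_i=\sum_{|D|=i}q_D$, and then both $i\mapsto g^{\xi,b}(t,e_j,\tilde q+e_i)$ and $i\mapsto i\,M(z)^{i-1}M'(z)$ are increasing in $i$; this is the comonotonicity that Lemma~\ref{le:ineqsum} requires. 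You allude to a ``Chebyshev-type rearrangement on $|D|$'' but never name the monotonicity of $g^{\xi,b}$ in $i$, which is the crux.
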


\begin{proof}
 For any $q=\Dseq{q}\in\N_0^\ell$, we define $\qtilde=(\qtilde_1,\ldots,\qtilde_d)\in\N_0^d$ by
\begin{equation*}
\qtilde_i \defeq \sum_{\substack{\DD: \\ |D|=i}}q_D
\end{equation*}	
i.e.\ whereas the components of $q$ count the number of events where claims in set $D$ are affected, the components of $\qtilde$ count the number of events where $i$ lines are affected. 
Due to Assumption~\eqref{asDir:comparison} $\qtilde$ contains the same information as $q$. Thus, we can interpret $g^{\xi,b}(t,p,q)$ as a function $g^{\xi,b}(t,p,\qtilde).$ With a slight abuse of notation we keep the same name for the function.  We also have that $i\mapsto g^{\xi,b}(t,p,\qtilde+e_i)$ is increasing. We obtain from Lemma ~\ref{leDir:propg2} a) 
\begin{align*} 
&\sum_{k=1}^m\lambda_k p_k \Dsum\!\fracbetaq g^{\xi,b}(t,J(p),v(q,D))\gamma(t,a,D)\\ =& \sum_{j=1}^m p_j\lambda_j\Dsum\fracbetaq g^{\xi,b}(t,e_j,v(q,D))\gamma(t,a,D)
\end{align*}
Next we can write this as 	
\begin{align*} 
&=\sum_{j=1}^m p_j\lambda_j\idsum\frac{\tilde{\beta}_i+\qtilde_i}{\normbetaq}\, g^{\xi,b}(t,e_j,\qtilde+e_i)\,i\bigg(\int_{(0,\infty)} e^{\alpha a e^{r(T-t)} y_1}\Fbar(\dif y_1)\bigg)^{i-1}\times\\
&\qquad \int_{(0,\infty)} y_1e^{\alpha a e^{r(T-t)} y_1}\Fbar(\dif y_1)\\
&\ge \sum_{j=1}^m p_j\lambda_j\idsum\frac{\tilde{\beta}_i+\qtilde_i}{\normbetaq}\, g^{\xi,b}(t,e_j,\qtilde+e_i)\sum_{k=1}^d\frac{\tilde{\beta}_k+\qtilde_k}{\normbetaq}\,k\bigg(\int_{(0,\infty)} e^{\alpha a e^{r(T-t)} y_1}\Fbar(\dif y_1)\bigg)^{k-1}\times\\
&\qquad\int_{(0,\infty)} y_1e^{\alpha b e^{r(T-t)} y_1}\Fbar(\dif y_1)\\
%&= \sum_{j=1}^m p_j\lambda_j\Esum\frac{\beta_E+q_E}{\|\bar{\beta}+q\|}g^{\xi,b}(t,e_j,v(q,E))\Dsum\fracbetaq\times \\
%&\quad\idsum\ind{D}(i)\intdalt y_i\exp\bigg\{\alpha\,a\,e^{r(T-t)}\sum_{j=1}^m y_j\ind{D}(j)\bigg\}F(\dif y) \\
%&=\sum_{j=1}^m p_j\lambda_j g^{\xi,b}(t,e_j,q)\Dsum\fracbetaq\idsum\ind{D}(i)\!\!\int\limits_{(0,\infty)^d}\!\!\!\! y_i\exp\bigg\{\alpha a e^{r(T-t)}\sum_{j=1}^m y_j\ind{D}(j)\bigg\}F(\dif y),
\end{align*}	
The inequality is due to Lemma ~\ref{le:ineqsum} and the fact that 	$g^{\xi,b}(t,e_j,\tilde q + e_i)$ and the second factor both are increasing in $i$. The last expression can due to Lemma~\ref{leDir:propg2} b) be written as
\begin{align*} 
& \sum_{j=1}^m p_j\lambda_j\Esum\frac{\beta_E+q_E}{\|\bar{\beta}+q\|}g^{\xi,b}(t,e_j,v(q,E))\Dsum\fracbetaq\gamma(t,a,D) \\
&=\sum_{j=1}^m p_j\lambda_j g^{\xi,b}(t,e_j,q)\Dsum\fracbetaq \gamma(t,a,D).
\end{align*}
Further we have
\begin{align*} 
& \sum_{j=1}^m p_j\lambda_j g^{\xi,b}(t,e_j,q) \ge  g^{\xi,b}(t,p,q) \sum_{j=1}^m p_j\lambda_j.
\end{align*}
again by Lemma ~\ref{le:ineqsum}, Lemma \ref{leDir:propg2} and the fact that $\lambda_j$ and $g^{\xi,b}(t,e_j,q)$ are increasing. 
Thus, we obtain
 \begin{align*}
&\sum_{j=1}^m p_j\lambda_j g^{\xi,b}(t,e_j,q)\Dsum\fracbetaq \gamma(t,a,D) \ge  g^{\xi,b}(t,p,q) \sum_{j=1}^m p_j\lambda_j\Dsum\fracbetaq\gamma(t,a,D).
\end{align*}	
 In summary, we have
\begin{align*}
&\sum_{k=1}^m\!\!\lambda_k p_k \Dsum\fracbetaq g^{\xi,b}(t,J(p),v(q,D))\gamma(t,a,D)\ge g^{\xi,b}(t,p,q) \sum_{j=1}^m\!\! p_j\lambda_j\Dsum\fracbetaq\gamma(t,a,D),
\end{align*}
which yields $h(t,p,q,a)\ge h_{u(p),w(q)}(t,a)$ by taking the infimum over all $\xib\in\UtT$ on both sides and the proof follows by inspecting the minimum points.
\end{proof}

\section{Numerical Results}
In this section we illustrate some numerical results in the case of two lines of business (i.e.\ $d=2$). The set of possible background intensities $\Lambda$ is $\{2,4,5\}$  and the prior probability mass function of $\Lambda$ is supposed to be
\begin{equation*}
\bar{\pi}_\Lambda=\bigg(\frac{2}{5},\frac{2}{5},\frac{1}{5}\bigg).
\end{equation*}
Furthermore, we assume that the prior parameter of the Dirichlet distribution of the thinning probabilities $\alphabar$ is
\begin{equation*}
\bar{\beta} = (8,7,5).
\end{equation*}
 Since we want to present the comparison result graphically, we choose the same claim size distribution for both business lines, namely a right-truncated exponential distribution with rate 1 and truncation at 3, i.e. 
\begin{equation*}
\Eop[Y_1^1]=\Eop[Y_1^2] = \frac{1}{1-e^{-3}}.
\end{equation*}
For the parameter $\kappa$ of the premium principle, we choose
\begin{align*}
\kappa &= \sum_{k=1}^m\lambda_k\,\pi_\Lambda(k)\Dsum\fracbetaD\idsum\ind{D}(i)\Eop\big[Y_1^i\big] 
=\frac{17}{4-4e^{-3}}.
\end{align*}

The remaining parameters are chosen as in Table~\ref{tabDir:simpara}.
\begin{table}[hb]
	\centering
	\begin{tabular}{cc}
		\hline
		parameter & value \\ 
		\hline
		$x_0$ & 100\phantom{.01}  \\
		$T$ & \phantom{0}10\phantom{.01}  \\
		$r$ & \phantom{10}0.01 \\
		$\mu$ & \phantom{10}0.2\phantom{1} \\
		$\sigma$ & \phantom{10}3\phantom{.01} \\
		$\alpha$ & \phantom{10}0.2\phantom{1} \\
		$\eta$ & \phantom{10}0.4\phantom{1} \\
		$\theta$ & \phantom{10}0.6\phantom{1} \\
		\hline
	\end{tabular}
	\caption[Simulation parameters]{Simulation parameters.}\label{tabDir:simpara}
\end{table}
The following simulations are generated under the assumption that the realization of $\alphabar$ is $(0.38,0.48,0.14)$ and that the true background intensity is 4 (i.e.\ the realization of $\Lambda$ is 4). Trajectories of the filter can be seen in Figure~\ref{figDir:filter}. It is illustrative to see the fast convergence against the true parameter.
\begin{figure}[htbp]
	\centering
	\includegraphics[width=0.8\textwidth]{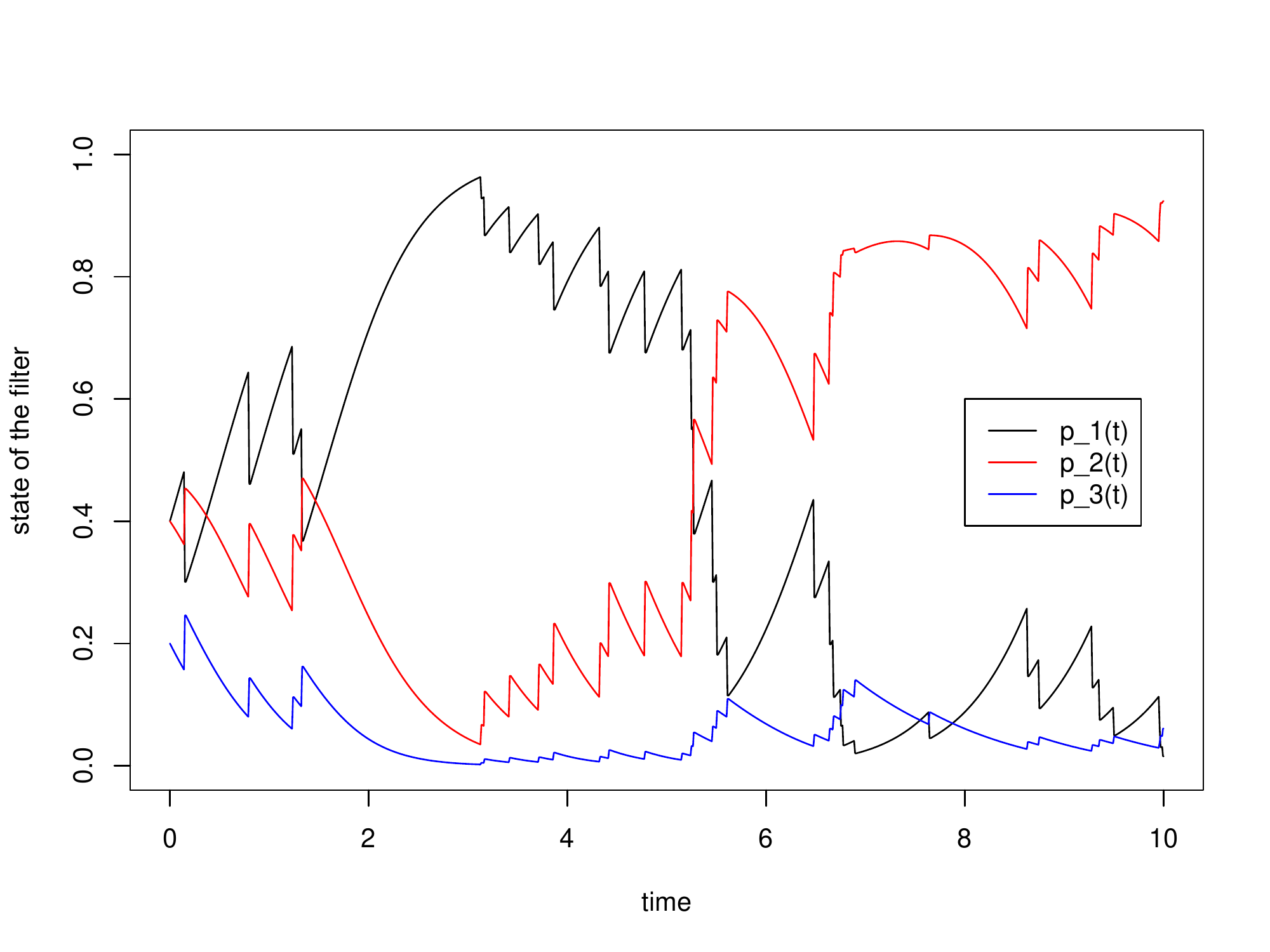}
	\caption[A trajectory of the filter process]{A trajectory of the filter process $\stpr{p}$ under the assumptions that  $\bar{\pi}_\Lambda=(2/5,2/5,1/5)$ and $\Lambda=4$, where $p_t=(p_1(t),p_2(t),p_3(t))$ with $p_1(t)=\P(\Lambda=2|\cG_t)$, $p_2(t)=\P(\Lambda=4|\cG_t)$ and $p_3(t)=\P(\Lambda=5|\cG_t)$.}
	\label{figDir:filter}
\end{figure}

 In Figure~\ref{figDir:bounds}  the a-priori-bounds (red and orange) are illustrated together with two trajectories (black and blue) of the reinsurance strategy $(b^\star_{u(p_{t-}),w(q_{t-})}(t))_{t\in[0,T]}$ with  $u(p)\defeq \sum_{k=1}^m\lambda_k p_k$ and $w(q)\defeq((\beta_D+q_D)/\normbetaq)_\DD$, which provide for each scenario an upper bound for the corresponding optimal reinsurance strategy according to Theorem~\ref{thDir:comparison}. 
So the black and blue lines depend on the realized trigger arrival times and the affected business lines. In both scenarios, the upper bounds (black and blue) obtained from the comparison result are only useful up to approximaltely time 8.
Before this, a strong dependence on the realizations can be seen. Only until the first trigger event both paths provide the same bound.

\begin{figure}[htbp]
	\centering
	\includegraphics[width=0.8\textwidth]{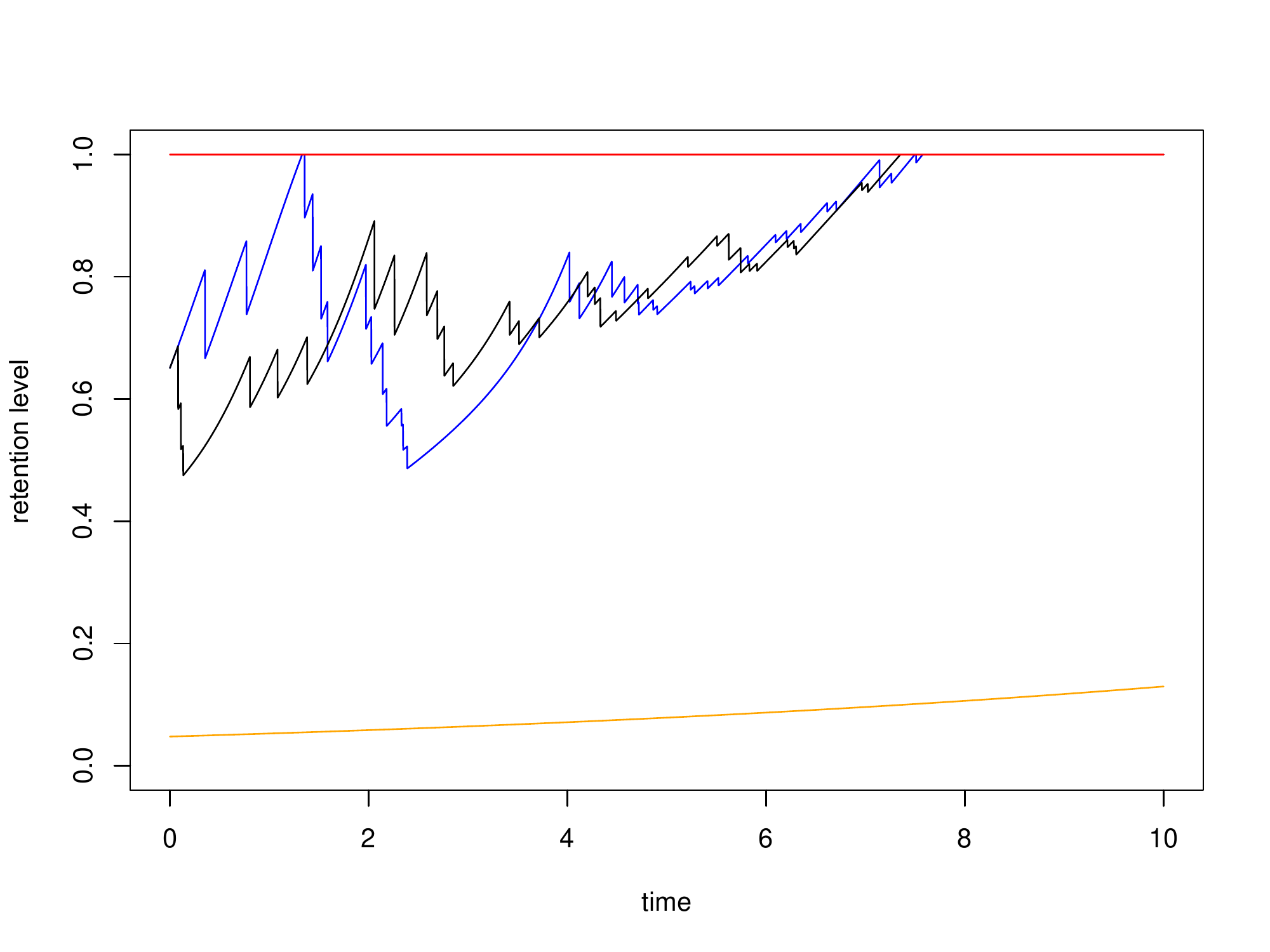}
	\caption[A-priori-bounds and an upper bound for the optimal reinsurance strategy.]{A priori upper (red) and lower bound (orange) for the optimal reinsurance strategy and two paths of the reinsurance strategy $(b^\star_{u(p_{t-}),w(q_{t-})}(t))_{t\in[0,T]}$ with  $u(p)\defeq \sum_{k=1}^m\lambda_k p_k$ and $w(q)\defeq((\beta_D+q_D)/\normbetaq)_\DD$.}
	\label{figDir:bounds}
\end{figure}

Concluding the numerical illustration, we show the path of the surplus process in an insurance loss scenario for three different insurance strategies in Figure~\ref{figDir:surplus}. 
In the case of full reinsurance (i.e. retention level of 0) the trajectory of the surplus process tends downwards (red) due to a negative premium rate.
The blue line displays a trajectory of the surplus  for a constant reinsurance strategy of 0.5 and the black line for the reinsurance strategy $(b^\star_{u(p_{t-}),w(q_{t-})}(t))_{t\in[0,T]}$ with  $u(p)= \sum_{k=1}^m\lambda_k p_k$ and $w(q)=((\beta_D+q_D)/\normbetaq)_\DD$. From Figure~\ref{figDir:bounds}, we known that the latter reinsurance strategy tends upwards, which is evident in Figure~\ref{figDir:surplus} since jump sizes of the black line are higher at the end of the considered time interval than those of the blue line. But because of the lower level of reinsurance, the surplus between losses rises stronger (as the premium rate is higher) than in the case of the constant reinsurance strategy.

\begin{figure}[htbp]
	\centering
	\includegraphics[width=0.8\textwidth]{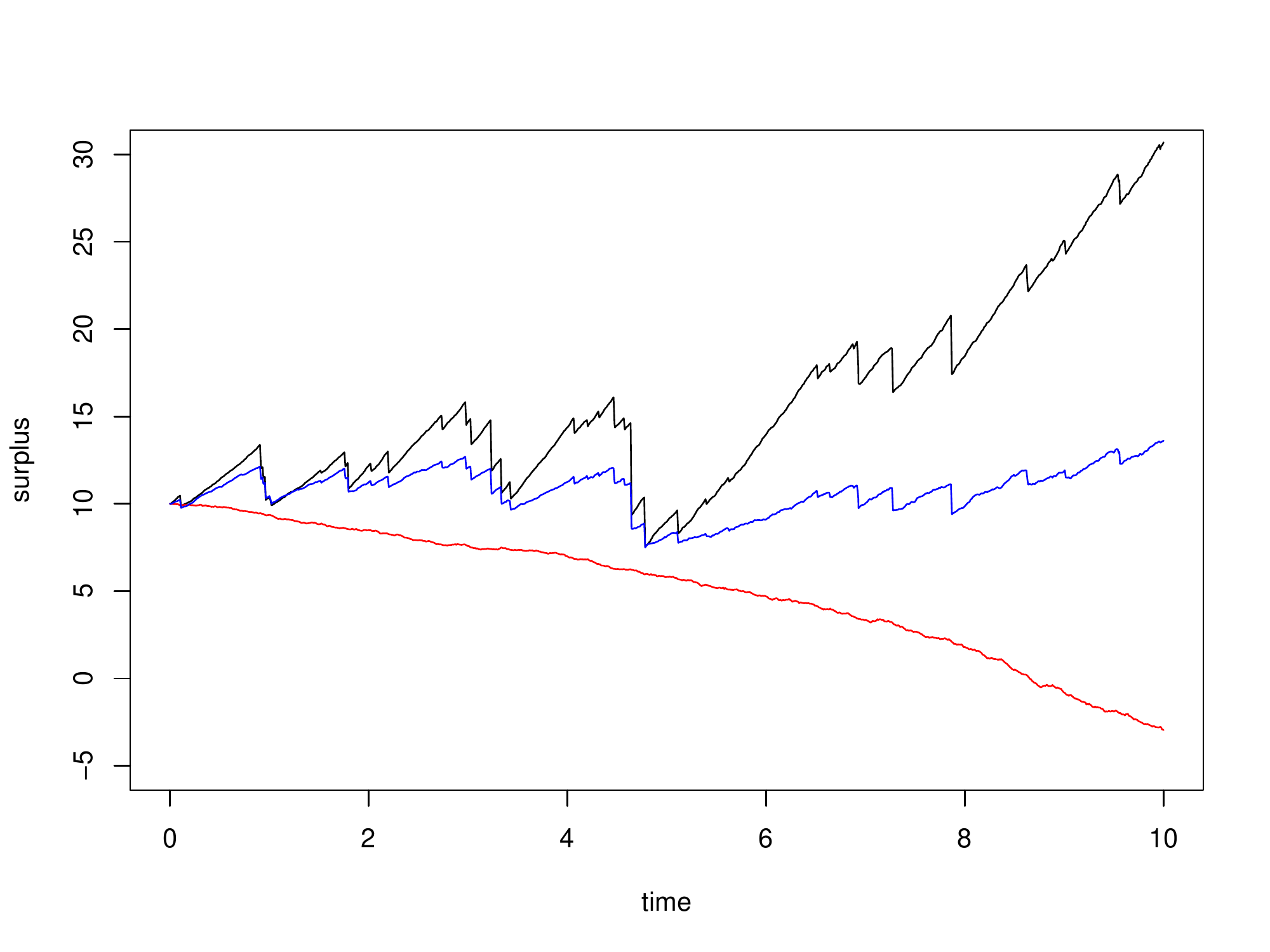}
	\caption[Paths of the surplus process for three different insurance strategies]{Paths of the surplus process  in case of full reinsurance (red), constant retention level of 0.5 (blue) and the reinsurance strategy $(b^\star_{u(p_{t-}),w(q_{t-})}(t))_{t\in[0,T]}$ with  $u(p)\defeq \sum_{k=1}^m\lambda_k p_k$ and $w(q)\defeq((\beta_D+q_D)/\normbetaq)_\DD$ (black).}
	\label{figDir:surplus}
\end{figure}

\section{Appendix}\label{sec:app}
\subsection{The Generalized Clark Gradient}
The following definition and results are taken from Section~2.1 in \cite{Clarke1983}.

\begin{definition}[\cite{Clarke1983}, p.\,25]\label{def:gendirder}
	Let $x\in\R^n$ be a given point and let $v\in\R^n$. Moreover, let $f$ be Lipschitz near $x$. Then the \emph{generalized directional derivative} of $f$ at $x$ in the direction $v$, denoted by $f^\circ(x;v)$, is defined by
	\begin{equation*}
	f^\circ(x;v) = \limsup_{y\to x, h\downarrow 0}\frac{f(y+h\,v)-f(y)}{h}.
	\end{equation*}
\end{definition}

\begin{definition}[\cite{Clarke1983}, p.\,27]\label{def:Clarksub}
	Let $f$ be Lipschitz near $x$. Then the \emph{generalized Clarke gradient} of $f$ at $x$, denoted by $\partial^C f(x)$, is given by
	\begin{equation*}
	\partial^Cf(x)\defeq\big\{\xi\in\R^n: f^\circ(x;v)\ge \xi^\top v\;\forall\;v\in\R^n\big\}.
	\end{equation*}
\end{definition}

%\begin{proposition}[\cite{Clarke1983}, Prop.\,2.1.2]\label{pr:propClarke}
%	Let $f$ be Lipschitz near $x$. Then $\partial^Cf(x)$ is non-empty, convex and a compact subset of $\R^n$.
%\end{proposition}

In the following, we denote by $D$ the differential operator taking the partial derivative of the function $f$.

\begin{proposition}[\cite{Clarke1983}, Prop.\,2.2.4]\label{pr:propClarkediff}
	If $f$ is strictly differentiable at $x$, then $f$ is Lipschitz near $x$ and $\partial^C f(x) = \{Df(x)\}$. Conversely, if $f$ is Lipschitz near $x$ and $\partial^C f(x)$ reduces to a singleton $\{\zeta\}$, then $f$ is strictly differentiable at $x$ and $Df(x)=\zeta$.
\end{proposition}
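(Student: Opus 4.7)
The plan is to prove the two implications separately, with the central bridge being the standard support-function characterization of $\partial^C f$: for $f$ Lipschitz near $x$ one has
\begin{equation*}
f^\circ(x;v) \;=\; \max_{\xi\in\partial^C f(x)} \xi^\top v,\qquad v\in\R^n.
\end{equation*}

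\emph{Forward direction.} Suppose $f$ is strictly differentiable at $x$ with derivative $\zeta:=Df(x)$. By the definition of strict differentiability, the difference quotients $(f(y+hv)-f(y))/h$ converge to $\zeta^\top v$ uniformly in $v$ on compact sets as $(y,h)\to(x,0^+)$; applying this to $v$ in the unit ball yields a Lipschitz constant for $f$ in a neighbourhood of $x$. Since the $\limsup$ in Definition~\ref{def:gendirder} then agrees with this limit, $f^\circ(x;v)=\zeta^\top v$ for every $v$. Hence $\partial^C f(x)$ is the set of $\xi$ with $\xi^\top v\le \zeta^\top v$ for all $v$, and applying this to both $v$ and $-v$ forces $\xi=\zeta$.

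\emph{Converse direction.} Assume $f$ is Lipschitz with constant $K$ near $x$ and $\partial^C f(x)=\{\zeta\}$. The support-function identity gives $f^\circ(x;v)=\zeta^\top v$ for every $v$. A short change of variables $y'=y-hv$ in the definition of $f^\circ(x;-v)$ yields
\begin{equation*}
-f^\circ(x;-v) \;=\; \liminf_{y\to x,\, h\downarrow 0}\frac{f(y+hv)-f(y)}{h},
\end{equation*}
so both the upper and lower limits of $(f(y+hv)-f(y))/h$ equal $\zeta^\top v$, and the pointwise limit exists for each fixed $v$. To upgrade this to the uniform form required by strict differentiability, I would use equi-Lipschitzness: for $(y,h)$ close enough to $(x,0^+)$ each map $v\mapsto (f(y+hv)-f(y))/h$ is $K$-Lipschitz, so pointwise convergence on a countable dense subset of any compact set, combined with an Arzel\`a--Ascoli style argument, promotes the convergence to uniform convergence on that compact set.

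The main obstacle will be the converse direction, which rests on two non-trivial sub-steps. First, the support-function identity $f^\circ(x;\cdot)=\sigma_{\partial^C f(x)}$ has to be justified, which requires showing that $v\mapsto f^\circ(x;v)$ is finite, positively homogeneous and sublinear (all consequences of local Lipschitzness), and then invoking Hahn--Banach separation so that every $\xi$ with $\xi^\top v\le f^\circ(x;v)$ for all $v$ belongs to $\partial^C f(x)$; the singleton hypothesis then pins $f^\circ(x;v)$ down to $\zeta^\top v$. Second, the passage from pointwise convergence of the directional difference quotients to the uniform convergence hidden in strict differentiability is the delicate analytic step, and the equi-Lipschitz bound inherited from the assumption is the decisive ingredient that makes it go through.
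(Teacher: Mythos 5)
Your proposal is correct: the forward direction (uniform convergence of the strict difference quotients gives local Lipschitzness and forces $f^\circ(x;\cdot)=\zeta^\top\cdot$, hence a singleton gradient) and the converse (support-function identity plus the identity $-f^\circ(x;-v)=\liminf$, upgraded to uniform convergence on compacts via the $K$-Lipschitzness of $v\mapsto(f(y+hv)-f(y))/h$) together constitute the standard argument, which is essentially how Clarke himself proves Propositions 2.1.2, 2.2.1 and 2.2.4. The paper does not prove this statement at all — it is quoted verbatim from \cite{Clarke1983} as a tool for the generalized HJB equation — so there is no in-paper proof to compare against, and your reconstruction is a faithful and complete substitute for the cited reference.
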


In what follows we denote by $\Omega_f$ the set of point at which the function $f$ is is not differentiable.

\begin{theorem}[\cite{Clarke1983}, Thm.\,2.5.1]\label{th:genCgco}
	Let $f$ be Lipschitz near $x$ and let $S$ be an arbitrary set of Lebesgue-measure $0$ in $\R^n$. Then
	\begin{equation*}
	\partial^C f(x) = co\Big\{\lim_{n\to\infty} \nabla f(x_n): x_n\to x, x_n\notin S, x_n\notin\Omega_f\Big\}.
	\end{equation*}
\end{theorem}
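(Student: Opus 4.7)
\medskip

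\noindent\textbf{Proof proposal for Theorem~\ref{th:genCgco}.} Write $K(x)$ for the set on the right-hand side. The plan is to show both inclusions, treating $K(x)\subseteq\partial^Cf(x)$ as routine and concentrating on the reverse inclusion via support functions.

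First I would set the stage. Rademacher's theorem says a locally Lipschitz function on $\R^n$ is differentiable Lebesgue-almost everywhere, so $\Omega_f$ has measure zero; combined with $S$ the exceptional set $N:=S\cup\Omega_f$ still has measure zero. Because $f$ is Lipschitz with some constant $L$ on a neighborhood of $x$, we have $\|\nabla f(y)\|\le L$ wherever the gradient exists in that neighborhood, so any accumulation point $\lim_n\nabla f(x_n)$ of sequences $x_n\to x$ with $x_n\notin N$ has norm $\le L$, and at least one such sequence exists (density of the complement of $N$). Hence $K(x)$ is a nonempty, compact, convex subset of the closed ball of radius $L$.

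\smallskip

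\noindent\emph{Easy inclusion $K(x)\subseteq\partial^Cf(x)$.} Take $\zeta=\lim_n\nabla f(x_n)$ with $x_n\to x$ and $x_n\notin N$. For any direction $v\in\R^n$ and any $h>0$ the mean value theorem (applied componentwise) together with continuity gives $\nabla f(x_n)\cdot v=\lim_{h\downarrow 0}\frac{f(x_n+hv)-f(x_n)}{h}$ at every point of differentiability. Passing to the limit in $n$ and comparing with Definition~\ref{def:gendirder} yields $\zeta\cdot v\le f^\circ(x;v)$. Since $f^\circ(x;\cdot)$ is positively homogeneous and sublinear (direct check from the definition using Lipschitz continuity), this inequality for every $v$ places $\zeta$ in $\partial^Cf(x)$. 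Convex hulls are preserved because $\partial^Cf(x)$ is convex by construction.

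\smallskip

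\noindent\emph{Hard inclusion $\partial^Cf(x)\subseteq K(x)$.} I would argue by support functions: it suffices to prove $\sigma_{K(x)}(v):=\max_{\eta\in K(x)}\eta\cdot v=f^\circ(x;v)$ for every $v\in\R^n$, because then the Hahn–Banach separation theorem forces the two closed convex sets $K(x)$ and $\partial^Cf(x)$ to coincide. The direction $\sigma_{K(x)}(v)\le f^\circ(x;v)$ is immediate from the easy inclusion. The remaining inequality
\begin{equation*}
  f^\circ(x;v)\le \sigma_{K(x)}(v)
\end{equation*}
is the heart of the proof and, I expect, the main obstacle. I would argue as follows. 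Fix $v\neq 0$ and choose sequences $y_k\to x$, $h_k\downarrow 0$ with $\frac{f(y_k+h_kv)-f(y_k)}{h_k}\to f^\circ(x;v)$. The one-dimensional set $\{t\in(0,h_k):y_k+tv\in N\}$ has Lebesgue measure zero for almost every $y_k$ in a small ball about $x$, by Fubini applied to the indicator of $N$ along lines parallel to $v$; so by slightly perturbing $y_k$ (using Lipschitz continuity to keep the difference quotient close) I may assume $t\mapsto f(y_k+tv)$ is absolutely continuous on $[0,h_k]$ and differentiable with derivative $\nabla f(y_k+tv)\cdot v$ for a.e.\ $t$ there, with $y_k+tv\notin N$ on a set of full measure. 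By the mean value theorem for absolutely continuous functions there exists $t_k\in(0,h_k)$ with $y_k+t_kv\notin N$ and
\begin{equation*}
  \nabla f(y_k+t_kv)\cdot v \;\ge\; \frac{f(y_k+h_kv)-f(y_k)}{h_k} - \tfrac1k.
\end{equation*}
Set $z_k:=y_k+t_kv$. Then $z_k\to x$, $z_k\notin N$, and $\|\nabla f(z_k)\|\le L$, so a subsequence $\nabla f(z_{k_j})\to\zeta\in K(x)$ with $\zeta\cdot v\ge f^\circ(x;v)$. This yields $\sigma_{K(x)}(v)\ge f^\circ(x;v)$ and finishes the argument.

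\smallskip

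The key technical hurdle is the Fubini/absolute-continuity step: one must justify that a generic perturbation of $y_k$ produces a line segment on which $f$ is differentiable almost everywhere \emph{and} avoids $N$ off a measure-zero set, while simultaneously keeping the difference quotient arbitrarily close to $f^\circ(x;v)$. Everything else is bookkeeping with Lipschitz estimates, compactness in $\R^n$, and the Hahn–Banach separation of closed convex sets.
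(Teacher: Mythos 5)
This theorem is not proved in the paper at all: it is quoted verbatim from Clarke's book (Theorem 2.5.1 of \cite{Clarke1983}) as a known tool, so there is no in-paper proof to compare against. Your reconstruction is correct and follows essentially Clarke's own argument --- Rademacher plus boundedness of gradients for nonemptiness and compactness of the right-hand side, upper semicontinuity of the difference quotients for the easy inclusion, and for the reverse inclusion the identification of the two compact convex sets via their support functions, with the key inequality $f^\circ(x;v)\le\sigma_{K(x)}(v)$ obtained by Fubini along lines parallel to $v$ together with absolute continuity of $t\mapsto f(y+tv)$; the only cosmetic difference is that you extract a point $t_k$ on the segment where the directional derivative dominates the difference quotient, whereas Clarke bounds the integral $f(y+tv)-f(y)=\int_0^t\nabla f(y+sv)\cdot v\,ds$ from above by $t\sup\nabla f\cdot v$, and these are the same estimate read in opposite directions.
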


\subsection{Auxiliray Results}
Detailed calculations can be found in \cite{gl20}.
\begin{lemma}\label{leDir:characG}
	Suppose that $(\xi,b)\in U[0,T]$ is an arbitrary strategy and $h:\domaingDir\to(0,\infty)$ is a bounded function such that $t\mapsto h(t,p,q)$ and $t\mapsto h(t,\phi(t,p),q)$ are absolutely continuous on $[0,T]$ for all $(p,q)\in\Delta_m\times\N_0^\ell$ as well as $p\mapsto h(t,p,q)$ is concave for all $(t,q)\in[0,T]\times\N_0^\ell$. Then, the function $G:\domainVDir\to\R$ defined by
	\begin{equation*}
	G(t,x,p,q) \defeq -e^{-\alpha x e^{r(T-t)}}\,h(t,p,q)
	\end{equation*}
	satisfies
	\begin{equation*}
	\dif G(t,\Xxib_t,p_t,q_t)  = -e^{-\alpha \Xxib_t e^{r(T-t)}}\,\mathcal{H} h(t,p_t,q_t;\xi_t,b_t)dt + \dif\eta^{\xi,b}_t, \quad t\in[0,T],
	\end{equation*}
	where $\stprT{\eta^{\xi,b}}$ is a $\fG$-martingale and we set $\mathcal{H} h(t,p,q;\xi,b)$ zero at those points $(t,p,q)$ where $D h$ does not exist.
\end{lemma}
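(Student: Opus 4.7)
The strategy is to apply an It\^o-type change of variables to $G(t,X^{\xi,b}_t,p_t,q_t)$, exploiting that the $x$-dependence of $G$ is smooth while the rough factor $h$ is handled via the absolute continuity hypotheses along the deterministic arcs of the filter. I decompose the interval at the jump times $(T_n)$ of $\Psi$ and treat the continuous and jump contributions separately.

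Between two consecutive jumps of $\Psi$ one has $q_s=q_{T_n}$ constant and $p_s=\phi(s-T_n,p_{T_n})$ solving the ODE~\eqref{eqDir:ODEphi}. Writing $\beta(t):=\alpha e^{r(T-t)}$, the ordinary It\^o formula applied to the smooth $x$-dependence together with the dynamics~\eqref{eq:wealth} of $X^{\xi,b}$ produces the Brownian martingale increment $\xi_t\sigma G_x(t,X^{\xi,b}_t,p_t,q_t)\,dW_t$, while the $-rx\beta h e^{-\beta x}$ contribution of $G_t$ cancels the $rX$-part of $G_x\,dX^c$. For the non-smooth $h$-piece, the hypothesis that $u\mapsto h(u,\phi(u,p),q)$ is AC combined with the fundamental theorem of calculus yields
\begin{equation*}
\frac{d}{du}h(u,p_u,q_u)=h_t(u,p_u,q_u)+\sum_{j=1}^{m} h_{p_j}(u,p_u,q_u)\,p_{j,u}\Big(\sum_{k=1}^{m}\lambda_k\,p_{k,u}-\lambda_j\Big)
\end{equation*}
at Lebesgue-a.e.\ $u$ where $Dh$ exists. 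Putting these together, the continuous drift of $dG$ is $-e^{-\beta(t)X^{\xi,b}_t}\{h_t+\beta h f_1(t,\xi_t)-\beta h c(b_t)+\sum_{j}h_{p_j}p_{j,t}(\sum_{k}\lambda_k p_{k,t}-\lambda_j)\}\,dt$, with $f_1$ as in~\eqref{eq:fone}.

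For the jump part, at a firing $(y,z)$ of $\Psi$ the triple $(X,p,q)$ jumps to $(X_{s-}-b_s\sum_i y_i\ind{z}(i),J(p_{s-}),v(q_{s-},z))$. I split the pure-jump integral against $\Psi$ into a compensated-measure integral (a $\fG$-martingale by Proposition~\ref{prDir:GintkernelPsi}, with integrability ensured by boundedness of $h$, $b\in[0,1]$ and~\eqref{eqDir:F}) plus the compensator obtained by replacing $\Psi$ with $\hat\nu$. A direct computation using the definition of $f_2$ and the identity $\sum_{D}\tfrac{\beta_D+q_D}{\|\bar\beta+q\|}=1$ shows this compensator equals $-e^{-\beta(t)X^{\xi,b}_t}\{f_2(t,p_t,q_t,b_t)+\beta h c(b_t)-\sum_{k}\lambda_k p_{k,t}\,h(t,p_t,q_t)\}\,dt$. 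Adding continuous drift and jump compensator, the $\pm\beta h c(b)$ terms cancel and the sum equals $-e^{-\beta(t)X^{\xi,b}_t}\,\mathcal{H}h(t,p_t,q_t;\xi_t,b_t)\,dt$ with
\begin{equation*}
\mathcal{H}h(t,p,q;\xi,b)=h_t(t,p,q)+\mathcal{L}h(t,p,q;\xi,b)+\sum_{j=1}^{m} h_{p_j}(t,p,q)\,p_j\Big(\sum_{k=1}^{m}\lambda_k\,p_k-\lambda_j\Big),
\end{equation*}
i.e.\ exactly the pre-infimum expression in~\eqref{eqDir:gHJBa}. The martingale $\eta^{\xi,b}$ is the sum of the Brownian and compensated-measure pieces.

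The main obstacle is justifying that setting $\mathcal{H}h=0$ on the exceptional set where $Dh$ does not exist is consistent with the pathwise identity. Concavity of $p\mapsto h(t,p,q)$ on $\Delta_m$ gives local Lipschitz continuity in $\mathring{\Delta}_m$; together with AC in $t$ this makes $h$ Lipschitz on compact subsets of $[0,T]\times\mathring{\Delta}_m$, so by Rademacher's theorem $Dh$ exists off a Lebesgue-null set. The AC hypothesis on $u\mapsto h(u,\phi(u,p),q)$ supplies the FTC along the trajectory without appealing to pointwise differentiability, and one then argues, using that the flow $\phi$ is smooth and the $q$-component stays constant between jumps, that the resulting a.e.\ total derivative agrees with the chain-rule expression wherever $Dh$ exists, with the discrepancy confined to a Lebesgue-null set of times. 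Redefining the integrand to zero on this null set therefore does not change the pathwise integral, and the desired decomposition follows.
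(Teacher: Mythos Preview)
Your proof is correct and follows essentially the same route as the paper: an It\^o-type decomposition of $G(t,X^{\xi,b}_t,p_t,q_t)$ into the drift $-e^{-\alpha X^{\xi,b}_t e^{r(T-t)}}\mathcal{H}h\,dt$ plus a $\fG$-martingale built from the Brownian and compensated-jump integrals. The paper organizes the same computation via the semimartingale product rule $dG=h\,df+f\,dh+d[f,h]$ (computing the covariation term explicitly) rather than your decomposition at the jump times of $\Psi$, but the resulting terms coincide; your final paragraph on the a.e.\ existence of $Dh$ along the filter trajectory is in fact more explicit than the paper's treatment, which simply adopts the stated convention and defers details to \cite{gl20}.
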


\begin{proof}
	 Let $\xib\in U[0,T]$ and $h:\domaingDir\to(0,\infty)$ be some function satisfying the conditions stated in the lemma, where $0<K_0<\infty$ is some constant which bounds $h$, i.e. $|h(t,p,q)|\le K_0$ for all $(t,p,q)\in\domaingDir$. Furthermore, we set
	\begin{equation}\label{eq:f}
	G(t,x,p,q) \defeq -e^{-\alpha x e^{r(T-t)}}\,h(t,p,q)\quad\text{and}\quad f(t,x)\defeq -e^{-\alpha x e^{r(T-t)}},
	\end{equation}
	for any $(t,x,p,q)\in\domainVDir$. Let us fix $t\in[0,T]$. Applying the product rule  to $G\big(t,\Xxib_t,p_t,q_t\big)=f\big(t,\Xxib_t\big)h(t,p_t,q_t)$, we get
	\begin{align*}
	\dif G\big(t,\Xxib_t,p_t,q_t\big) 
	&= h(t,p_{t-},q_{t-})df\big(t,\Xxib_t\big) + f\big(t,\Xxib_{t-}\big) d h(t,p_t,q_t)  + \dif\big[f\big(\cdot,\Xxib_\cdot\big),h(\cdot,p_\cdot,q_\cdot)\big]_t 
	\end{align*}
	and hence, 
	  \begin{equation}\label{eqprDir:G}
	\begin{aligned}
	&\dif G\big(t,\Xxib_t,p_t,q_t\big) \\
	&= f\big(t,\Xxib_t\big)h(t,p_t,q_t)\bigg(\alpha e^{r(T-t)}\Big(\frac12\alpha\sigma^2 e^{r(T-t)}\xi_t^2 - (\mu-r)\xi_t - c(b_t)\Big) \\
	&\quad + \widehat{\Lambda}_t\Dsum\fracbetaCt\intdalt\exp\bigg\{\alpha b_t e^{r(T-t)}\idsum y_i\ind{D}(i)\bigg\}F(dy)-\widehat{\Lambda}_t\bigg)\dif t \\
	&\quad-f\big(t,\Xxib_{t-}\big) h(t,p_{t-},q_{t-}) \alpha\,\sigma\, e^{r(T-t)} \xi_tdW_t \\
	&\quad+\intEd\!\! f\big(t,\Xxib_{t-}\big) h(t,p_{t-},q_{t-})\bigg(\!\exp\bigg\{\alpha b_t e^{r(T-t)}\idsum y_i\ind{z}(i)\bigg\}\!-\!1\!\bigg)\hat{\Psi}(\dtyz) \\
	&\quad+f\big(t,\Xxib_{t}\big) \bigg( D h(t,p_t,q_t) - \widehat{\Lambda}_t h(t,p_t,q_t)+ %\\
%	&\qquad\qquad
\widehat{\Lambda}_t\Dsum\fracbetaqt h\big(t,J(p_t),v(q_t,D)\big)\bigg)\dif t \\
	&\quad+\int_\PD f\big(t,\Xxib_{t-}\big)\Big(h\big(t,J(p_{t-}),v(q_{t-},z)\big)-h(t,p_{t-},q_{t-})\Big)\hat{\Psi}(dt,\R_+,dz) \\
	&\quad + \dif\big[f\big(\cdot,\Xxib_\cdot\big),h(\cdot,p_\cdot,q_\cdot)\big]_t.
	\end{aligned}
	\end{equation}
% Since $h(\cdot,p_\cdot,q_\cdot)$ is an FV process, it holds
%\begin{equation*}
%\big[f\big(\cdot,\Xxib_\cdot\big)+h(\cdot,p_\cdot,q_\cdot)\big]^c \equiv \big[f\big(\cdot,\Xxib_\cdot\big)\big]^c\quad\text{und}\quad \big[h(\cdot,p_\cdot,q_\cdot)\big]^c\equiv 0,
%\end{equation*}
%and, consequently, 
%\begin{align*}
%&\big[f\big(\cdot,\Xxib_\cdot\big),h(\cdot,p_\cdot,q_\cdot)\big]_t \\
%&= f\big(0,\Xxib_0\big)h(0,p_0,q_0)  + \stsum f\big(s,\Xxib_s\big)\big(h(s,p_s,q_s)-h(s,p_{s-},q_{s-})\big) \\
%&\quad - \stsum f\big(s,\Xxib_{s-}\big)\big(h(s,p_s,q_s)-h(s,p_{s-},q_{s-})\big) \\
%&= f\big(0,\Xxib_0\big)h(0,p_0,q_0) + \int_0^t\intEd f\Big(s,\Xxib_{s-}-b_s\idsum y_i\ind{z}(i)\Big)\times \\
%&\qquad \Big(h\big(s,J(p_{s-}),v(q_{s-},z)\big)-h(s,p_{s-},q_{s-})\Big)\Psi(\dsyz) \\
%&\quad- \int_0^t\int_\PD f\big(s,\Xxib_{s-}\big)\Big(h(s,J(p_{s-}),v(q_{s-},z))-h(s,p_{s-},q_{s-})\Big)\Psi(ds,\R_+,dz).
%\end{align*}
Using the introduced compensated random measure $\hat{\Psi}$  the variation becomes
\begin{align*}
&\big[f\big(\cdot,\Xxib_\cdot\big),h(\cdot,p_\cdot,q_\cdot)\big]_t \\
&= f\big(0,\Xxib_0\big)h(0,p_0,q_0) +\int_0^t \int_E f\big(s,\Xxib_{s-}\big)\exp\bigg\{\alpha\, b_s\, e^{r(T-s)}\idsum y_i \ind{z}(i)\bigg\}\times \\
&\qquad \Big(h\big(s,J(p_{s-}),v(q_{s-},z)\big)-h(s,p_{s-},q_{s-})\Big)\hat{\Psi}(\dsyz) \\
&\quad- \int_0^t\int_\PD f\big(s,\Xxib_{s-}\big)\Big(h\big((s,J(p_{s-}),v(q_{s-},z)\big)-h(s,p_{s-},q_{s-})\Big)\hat{\Psi}(ds,\R_+,dz) \\
&\quad + \int_0^t \widehat{\Lambda}_s\, f\big(s,\Xxib_s\big)\Dsum \fracbetaqs. %\intdalt\!\exp\bigg\{\alpha\, b_s\, e^{r(T-s)}\idsum y_i \ind{D}(i)\bigg\}\!F(dy)\times\\
%&\qquad \Big(h\big(s,J(p_{s}),v(q_{s},D)\big)-h(s,p_{s-},q_{s-})\Big)ds \\
%&\quad - \int_0^t \widehat{\Lambda}_s\,f\big(s,\Xxib_s\big)\Dsum\fracbetaCs h(s,J(p_s),v(q_{s},D))ds \\
%&\quad +\int_0^t \widehat{\Lambda}_s\, f\big(s,\Xxib_s\big) h(s,p_s,q_s)ds.
\end{align*}
  Substituting this into \eqref{eqprDir:G}, we obtain
\begin{align*}
&\dif G\big(t,\Xxib_t,p_t,q_t\big) \\
&=f\big(t,\Xxib_t\big)\bigg(- \alpha\, e^{r(T-t)}h(t,p_t,q_t)\Big((\mu-r)\,\xi_t+c(b_t)-\frac12\alpha\,\sigma^2\, e^{r(T-t)}\xi_t^2\Big) \\
& + \widehat{\Lambda}_t\Dsum\fracbetaCt h(t,J(p_t),v(q_t,D))\intdalt\!\exp\bigg\{\alpha\, b_t\,e^{r(T-t)} \idsum y_i\ind{D}(i)\bigg\}F(dy)\\
& - \widehat{\Lambda}_t\,h(t,p_t,q_t) + Dh(t,p_t,q_t)\bigg) \dif t \\
& - f\big(t,\Xxib_{t-}\big)\,h(t,p_{t-},q_{t-})\,\alpha\, \sigma\, e^{r(T-t)} \xi_t dW_t - f\big(t,\Xxib_{t-}\big)\,h(t,p_{t-},q_{t-})d\hat{N}_t \\
& + \int_E f\big(t,\Xxib_{t-}\big) \exp\bigg\{\alpha\, b_t\, e^{r(T-t)} \idsum y_i\ind{z}(i)\bigg\}h(t,J(p_{t-}),v(q_{t-},z))\hat{\Psi}(dt,d(y,z)),
\end{align*}
where $  \hat{N}_t \defeq N_t - \int_0^t\hat{\Lambda}_{s}ds.$
  Therefore, by definition of the operator $\mathcal{H}$ given in~\eqref{eqDir:ccH}, we have
\begin{equation*}
\dif G\big(t,\Xxib_t,p_t,q_t\big) = f\big(t,\Xxib_{t}\big)\mathcal{H} h(t,p_t,q_t;\xi_t,b_t)dt + \dif\eta^{\xi,b}_t,
\end{equation*}
where $\eta^{\xi,b}_t \defeq \etaxibhat_t - \etaxibbar_t -  \etaxibtilde_t$
with
\begin{align*}
\etaxibhat_t &\defeq \int_0^t\int_E f\big(s,\Xxib_{s-}\big) \exp\bigg\{\alpha\, b_s\, e^{r(T-s)} \idsum y_i\ind{z}(i)\bigg\}\times 
 h\big(s,J(p_{s-}),v(q_{s-},z)\big)\hat{\Psi}(\dsyz), \\
\etaxibbar_t &\defeq \int_0^t f\big(s,\Xxib_{s-}\big) \,h(s,p_{s-},q_{s-})d\hat{N}_s,\\
\etaxibtilde_t &\defeq \int_0^t f\big(s,\Xxib_{s-}\big)h(s,p_{s-},q_{s-})\,\alpha\, \sigma\, e^{r(T-s)} \xi_s dW_s.
\end{align*}
 To complete the proof we need to show that the introduced processes are $\fG$-martingales on $[0,T]$. For details we refer the reader to \cite{gl20}.
\end{proof}

\begin{lemma}\label{le:fbounded}
	Let $f:[0,T]\times\R\to\R$ be the function defined by~\eqref{eq:f}.
	Furthermore, let $\xib\in U[0,T]$ where $\xi$ is only adapted w.r.t.\ $W$ and $b$ is predictable w.r.t.\ the filtration generated by $\Psi$ and let $\prT{L^{\xi,b}}$ be the density process given by
	\begin{align*}
	L^{\xi,b}_t &\defeq \exp\bigg\{ -\int_0^t \alpha\,\sigma\,e^{r(T-s)}\xi_s dW_s -\frac12 \int_0^t \alpha^2\,\sigma^2\,e^{2r(T-s)}\xi_s^2 ds \\
	&\;\quad+\int_0^t\int_E \alpha\,b_{s}\,e^{r(T-s)}\idsum y_i\ind{z}(i)\,\Psi(\dsyz) +\int_0^t\hat{\Lambda}_s ds \\
	&\;\quad -\int_0^t \hat{\Lambda}_s\Dsum \fracbetaCs \intdalt\exp\bigg\{\alpha\,b_{s}\,e^{r(T-s)}\idsum y_i\ind{D}(i)\bigg\} \,F(\dif y) ds\bigg\}.
	\end{align*}
	
	Then there exists a constant $0<K_2<\infty$ such that 
	\begin{equation*}
	\frac{\big|f(t,\Xxib_t)\big|}{L^{\xi,b}_t}\le K_2\quad\Pas
	\end{equation*}
	for all $t\in[0,T]$.
\end{lemma}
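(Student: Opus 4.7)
The plan is to show that the exponent
\[
-\alpha X^{\xi,b}_t e^{r(T-t)}-\ln L^{\xi,b}_t
\]
is uniformly bounded above by a deterministic constant, since then
\(|f(t,X^{\xi,b}_t)|/L^{\xi,b}_t=\exp\!\bigl(-\alpha X^{\xi,b}_t e^{r(T-t)}-\ln L^{\xi,b}_t\bigr)\) is bounded above uniformly in $(t,\omega)$. The $L^{\xi,b}_t$ appearing in the statement is precisely the Doléans–Dade exponential built from the stochastic integrals that show up when one integrates the surplus SDE, so the design of $L^{\xi,b}_t$ is tailored for the martingale parts to cancel after we write out the exponent.

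To make this cancellation explicit, I would first apply the product rule to $X^{\xi,b}_t\,e^{r(T-t)}$ using \eqref{eq:surplus}. The drift term $rX^{\xi,b}_t$ cancels the derivative of the discount factor, yielding
\[
X^{\xi,b}_t e^{r(T-t)}=x\,e^{rT}+\int_0^t e^{r(T-s)}\bigl[(\mu-r)\xi_s+c(b_s)\bigr]\dif s+\int_0^t e^{r(T-s)}\sigma\xi_s\,\dif W_s-\int_0^t\!\int_E e^{r(T-s)}b_s\!\idsum y_i\ind{z}(i)\,\Psi(\dsyz).
\]
Multiplying by $-\alpha$ and adding $-\ln L^{\xi,b}_t$ (read off from the definition of $L^{\xi,b}_t$), the Brownian integrals $\pm\alpha\sigma e^{r(T-s)}\xi_s\,dW_s$ cancel and so do the $\Psi$-integrals $\pm\alpha e^{r(T-s)}b_s\sum_i y_i\ind{z}(i)$. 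What remains is purely an absolutely continuous expression:
\[
-\alpha X^{\xi,b}_t e^{r(T-t)}-\ln L^{\xi,b}_t=-\alpha x\,e^{rT}+\int_0^t\!\Bigl(\tfrac12\alpha^2\sigma^2 e^{2r(T-s)}\xi_s^2-\alpha e^{r(T-s)}[(\mu-r)\xi_s+c(b_s)]-\hat\Lambda_s+\hat\Lambda_s\!\Dsum\!\fracbetaqs\!\intdalt\!\exp\!\bigl\{\alpha b_se^{r(T-s)}\!\idsum y_i\ind{D}(i)\bigr\}\!F(\dif y)\Bigr)\dif s.
\]

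It then remains to bound the integrand uniformly in $s\in[0,T]$ and $\omega$. Here I would combine: $|\xi_s|\le K$ (Definition~\ref{def:investment}), $b_s\in[0,1]$ and hence $c(b_s)$ bounded, $e^{r(T-s)}\le e^{rT}$, $\hat\Lambda_s\le\lambda_m$, and the convexity weights $\tfrac{\beta_D+q_D(s)}{\|\bar\beta+q_s\|}\in[0,1]$ summing to one. The only non-trivial estimate is on the inner integral: since $\alpha b_s e^{r(T-s)}\sum_i y_i\ind{D}(i)\le\alpha e^{rT}\sum_i y_i$, the assumption \eqref{eqDir:F} gives
\[
\intdalt\exp\!\bigl\{\alpha b_s e^{r(T-s)}\!\idsum y_i\ind{D}(i)\bigr\}F(\dif y)\le M_F(\alpha e^{rT})<\infty.
\]
Multiplying by $T$ produces a finite constant $K_2$ independent of $t,\omega,(\xi,b)$, which is the claim.

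The main obstacle is really only bookkeeping: one has to apply the product rule to $X^{\xi,b}_t e^{r(T-t)}$ carefully (so that the $r X^{\xi,b}_t$ drift cancels cleanly) and then match terms between $-\alpha X^{\xi,b}_t e^{r(T-t)}$ and $-\ln L^{\xi,b}_t$ to see that both the $dW$ and the $\Psi$ stochastic integrals drop out. Once this cancellation is verified, the remaining bound is a routine consequence of the boundedness of $\xi$, $b$, $\hat\Lambda$ and of the exponential moment assumption \eqref{eqDir:F}.
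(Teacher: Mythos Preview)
Your argument is correct and follows essentially the same route as the paper: both compute the ratio $|f(t,\Xxib_t)|/L^{\xi,b}_t$ explicitly, observe that the Brownian and $\Psi$-integrals cancel, and then bound the remaining Lebesgue integrand using $|\xi_s|\le K$, $b_s\in[0,1]$, $\hat\Lambda_s\le\lambda_m$ and the moment condition~\eqref{eqDir:F}. One cosmetic point: since the model allows $r\in\R$, the estimate $e^{r(T-s)}\le e^{rT}$ should be replaced by $e^{r(T-s)}\le e^{|r|T}$ (as the paper does), which covers both signs of $r$.
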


\begin{proof}
	Fix $t\in[0,T]$ and $\xib\in U[0,t]$. We obtain
	\begin{align*}
	&\frac{\big|f(t,\Xxib_t)\big|}{L^{\xi,b}_t} = \exp\bigg\{-\alpha x_0 e^{rT} -\int_0^t \alpha e^{r(T-s)}\Big( (\mu-r)\xi_s +c(b_s) - \frac12\alpha\sigma^2 e^{r(T-s)}\xi_s^2\Big) ds  \\
	&\;+ \int_0^t \hat{\Lambda}_s\Dsum \fracbetaCs \intdalt\exp\bigg\{\alpha\,b_{s}\,e^{r(T-s)}\idsum y_i\ind{D}(i)\bigg\} \,F(\dif y) ds -\int_0^t \hat{\Lambda}_s ds\bigg\} \\
	&\le\exp\bigg\{\!\bigg(\alpha e^{|r|T}\big(|\mu-r|K+(2+\eta+\theta)\kappa\big)+\frac12\alpha^2\,\sigma^2\, e^{2|r|T}K^2 + \lambda_m M_F\big(\alpha e^{|r|T}\big)\!\bigg)T\bigg\}=: K_2,
	\end{align*}
	where $0<K_2<\infty$ is independent of $t\in[0,T]$ as well as $\xib$.
\end{proof}

The following result can be found in \cite{Mitrinovic1993}.

\begin{lemma}\label{le:ineqsum}
	Let $\alpha_1\le \ldots \le \alpha_n$ and $\beta_1\le\ldots\le\beta_n$ be real numbers and $(p_1,\ldots,p_n)\in\Delta_n$. Then
	\begin{equation*}
	\sum_{j=1}^n p_j\alpha_j\beta_j \ge \sum_{j=1}^n p_j\alpha_j\sum_{k=1}^n p_k\beta_k.
	\end{equation*}
\end{lemma}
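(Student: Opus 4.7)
The plan is to use the classical Chebyshev sum inequality trick: exploit the fact that the two sequences are monotone in the same direction, so for every pair of indices $(j,k)$ the product $(\alpha_j-\alpha_k)(\beta_j-\beta_k)$ is nonnegative. Since $(p_1,\ldots,p_n)\in\Delta_n$ are nonnegative weights, weighting such nonnegative quantities by $p_jp_k$ and summing keeps the inequality.

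Concretely, I would start from the observation
\begin{equation*}
0\le \sum_{j=1}^n\sum_{k=1}^n p_j\,p_k\,(\alpha_j-\alpha_k)(\beta_j-\beta_k),
\end{equation*}
which holds because each summand is a product of two differences with the same sign (both $\le 0$ or both $\ge 0$) by the common monotonicity of $(\alpha_i)$ and $(\beta_i)$. Expanding the product gives four sums; using $\sum_j p_j=\sum_k p_k=1$ the ``diagonal'' terms $\sum_{j,k}p_jp_k\alpha_j\beta_j$ and $\sum_{j,k}p_jp_k\alpha_k\beta_k$ both collapse to $\sum_j p_j\alpha_j\beta_j$, while the two cross terms $\sum_{j,k}p_jp_k\alpha_j\beta_k$ and $\sum_{j,k}p_jp_k\alpha_k\beta_j$ both equal $\bigl(\sum_j p_j\alpha_j\bigr)\bigl(\sum_k p_k\beta_k\bigr)$. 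Hence the double sum equals
\begin{equation*}
2\Bigl(\sum_{j=1}^n p_j\alpha_j\beta_j-\sum_{j=1}^n p_j\alpha_j\sum_{k=1}^n p_k\beta_k\Bigr)\ge 0,
\end{equation*}
and dividing by $2$ yields the claimed inequality.

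There is really no hard step here; the whole argument is one line of algebra once the pairing $(\alpha_j-\alpha_k)(\beta_j-\beta_k)\ge 0$ is noted. The only thing to be careful about is that this nonnegativity uses both sequences being sorted in the \emph{same} direction (if they were sorted oppositely, the inequality would reverse, giving the ``other'' Chebyshev inequality), and that $p_jp_k\ge 0$ together with $\sum p_j=1$ is needed both to preserve the sign and to identify the cross terms with the product of the two weighted means.
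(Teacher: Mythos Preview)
Your argument is correct: it is exactly the standard Chebyshev--sum--inequality proof via the identity $\sum_{j,k} p_jp_k(\alpha_j-\alpha_k)(\beta_j-\beta_k)=2\bigl(\sum_j p_j\alpha_j\beta_j-\sum_j p_j\alpha_j\sum_k p_k\beta_k\bigr)$, and every step (the sign of each summand, the collapse of the diagonal and cross terms using $\sum p_j=1$) is justified. The paper does not actually prove this lemma but simply cites \cite{Mitrinovic1993}; your self-contained derivation is the classical one found there.
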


\subsection{Proofs}

For convenience we introduce the operator $D$ acting on functions $h:\domaingDir\to(0,\infty)$ by
\begin{equation}\label{notDir:Dh}
D h(t,p,q) \defeq h_t(t,p,q) + \sum_{j=1}^m h_{p_j}(t,p,q)\,p_j\,\Big(\sum_{k=1}^m\lambda_k\,p_k-\lambda_j\Big)
\end{equation}
for all functions $h$, where the right-hand side exists.
Furthermore, we define an operator $\mathcal{H}$ 
%acting on functions $v:\domaingDir\to(0,\infty)$ and $(\xi,b)\in\R\times[0,1]$ by
\begin{equation}\label{eqDir:ccH}
\mathcal{H} h(t,p,q;\xi,b) \defeq \mathcal{L} h(t,p,q;\xi,b) + D\,h(t,p,q)
\end{equation}
for all functions $h:\domaingDir\to(0,\infty)$ and $(\xi,b)\in \R\times [0,1]$, where the right-hand side is well-defined.
Using this notation, the generalized HJB equation~\eqref{eqDir:HJBg} can be written as
\begin{equation}\label{eq:HHJB}
0= \inf_{(\xi,b)\in\R\times[0,1]}\{\mathcal{H} g(t,p,q;\xi,b)\}
\end{equation}
at those points $(t,p,q)$ with existing $Dg(t,p,q)$.
% such that $\partial^C g_q(t,p) = \{\tilde{D} g_q(t,p)\}$, where $\tilde{D}$ denotes the usual differential operator taking the partial derivatives.

\begin{proof}[Proof of Theorem~\ref{thDir:veri}]
   Let $h:\domaingDir\to(0,\infty)$ be a function satisfying the conditions stated in the theorem. Note that every Lipschitz function is also absolutely continuous. We set, for any $(t,x,p,q)\in\domaingDir$,
   \begin{equation*}
   f(t,x)\defeq -e^{-\alpha x e^{r(T-t)}}\quad\text{and}\quad G(t,x,p,q) \defeq f(t,x)\,h(t,p,q).
   \end{equation*}
   Let us fix $t\in[0,T]$ and $(\xi,b)\in U[t,T]$.
   From Lemma~\ref{leDir:characG} in the appendix, it follows 
   \begin{equation}\label{eqp:GTwfs}
   G(T,\Xxib_T,p_T,q_T) = G(t,\Xxib_t,p_t,q_t) + \int_t^T\!\! f(s,\Xxib_s)\,\mathcal{H} h(s,p_s,q_s;\xi_s,b_s)ds + \eta^{\xi,b}_T - \eta^{\xi,b}_t ,
   \end{equation}
   where $\stprT{\eta^{\xi,b}}$ is a $\fG$-martingale and we set $\mathcal{H} h(s,p_s,q_s;\xi,b)$ to zero at those points $\stT$ where $Dh$ does not exist. Note that $h$ is partially differentiable w.r.t.\ $t$ almost everywhere in the sense of the Lebesgue measure according to the absolute continuity of $t\mapsto h(t,p,q)$ for all $(p,q)\in\Delta_m\times \N_0^\ell$.
   The generalized HJB equation~\eqref{eq:HHJB} implies 
   \begin{equation*}
   \mathcal{H} h(s,p_s,q_s;\xi_s,b_s)\ge0\quad\stT.  %\text{for almost all } \stT.
   \end{equation*}
   As a consequence
   \begin{equation*}
   \int_t^T f(s,\Xxib_s)\,\mathcal{H} h(s,p_s,q_s;\xi_s,b_s) ds\le 0,
   \end{equation*}
   due to the negativity of  $f$. Thus, by~\eqref{eqp:GTwfs}, we get
   \begin{equation}\label{eq:GGeta}
   G(T,\Xxib_T,p_T,q_T)\le G(t,\Xxib_t,p_t,q_t) + \eta^{\xi,b}_T-\eta^{\xi,b}_t.
   \end{equation}
   Using the boundary condition~\eqref{eqDir:hHJBbcond}, we obtain
   \begin{equation*}
   G(T,x,p,q) = f(T,x)\,h(T,p,q) = f(T,x) = -e^{-\alpha x}=U(x).
   \end{equation*}
   %	Hence
   %	\begin{equation*}
   %	U(\Xxib_T) = G(T,\Xxib_T,p_T,q_T) \le G(t,\Xxib_t,p_t,q_t) + \eta^{\xi,b}_T-\eta^{\xi,b}_t.
   %	\end{equation*}
   Now, we take the regular conditional expectation in \eqref{eq:GGeta} given $\Xxib_t=x$, $p_t=p$ and $q_t=q$ on both sides of the inequality which yields
   \begin{equation*}
   \Eop^{t,x,p,q}\big[U(\Xxib_T)\big]\le G(t,x,p,q)
   \end{equation*}
   since $\stprtT{\eta^{\xi,b}}$ is a $\fG$-martingale.
   Taking the supremum over all investment and reinsurance strategies $(\xi,b)\in U[t,T]$, we obtain 
   \begin{equation}\label{eqp:VG}
   V(t,x,p,q) \le G(t,x,p,q).
   \end{equation} 
   %	As already seen in~\eqref{eq:gHJBa}, we can rewrite the generalized HJB equation as
   %	\begin{equation}\label{eqpr:HJB}
   %	\begin{aligned}
   %	0 &= \inf_{(\xi,b)\in\R\times[0,1]}\{\mathcal{L} h(s,p,q;\xi,b)\} + \inf_{\varphi \in \partial^C h_q(s,p) }\Big\{ \varphi_0 + \sum_{j=1}^m \varphi_j p_j \Big( \sum_{k=1}^m \lambda_kp_k -\lambda_j\Big) \Big\} %\\
   %	&= - \lambda\,h(s,p) + \alpha\,e^{r(T-s)}h(s,p)\inf_{\xi\in\R} f_1(s,\xi) + \inf_{b\in[0,1]} f_2(s,p,b) +\inf_{\varphi \in \partial^C h_p(s) }\{ \varphi \},
   %	\end{aligned}
   %	\end{equation}
   %	where $f_1$ is defined by~\eqref{eq:fone} and $f_2$ by~\eqref{eq:ftwo}.
   To show equality, note that  $\xi^\star(s)$ given by~\eqref{eqDir:xistarfunc} %is the unique minimizer of $f_1$ on $\R$ 
   and $b^\star(s,p,q)$ given by~\eqref{eqDir:bstarfunc} (with $g$ replaced by $h$ in $A(s,p,q)$ and $B(s,p,q)$) are the unique minimizer of the HJB equation \eqref{eqDir:HJBg}. 
   %Since $\partial^C h_q(s,p)$ is a compact subset of $\R^{m+1}$ (c.f.\ Prop.~\ref{pr:propClarke}), we know that the second infimum in \eqref{eqDir:HJBg} is attained which we denote by $\varphi^\star(s,p_s,q_s)$.
   Therefore, 
   \begin{equation*}
   \mathcal{L} h(s,p_s,q_s;\xi^{\star}(s),b^\star(s,p_s,q_s)) + \inf_{\varphi\in\partial^C g_q(t,p)}\bigg\{\varphi_0 + \sum_{j=1}^m\varphi_j\,p_j\bigg(\sum_{k=1}^m \lambda_k\,p_k-\lambda_j\bigg)\bigg\} = 0.
   \end{equation*}
   %	We set $b^{\star}_{\lambda,F}(s)\defeq\blamF(s-,p_{s-},q_{s-})$ for every $\stT$. %It is now routine to check that $(\xi^{\star},b^{\star})$  is an admissible investment and reinsurance strategy.
   %	First, we observe that $\stprtTalt{\xistar}$ is a continuous, bounded by $\frac{|\mu-r|}{\sigma^2}\frac{1}{\alpha}$ and deterministic (\ie $\fF^W$-adapted) process. Secondly, we note that $(t,\omega)\mapsto h(t,p_{t-}(\omega))$ and $(t,\omega)\mapsto h(t,J(p_{t-}(\omega),D))$ are $\cP(\fF^\Psi)$-measurable for all $\DD$, compare the arguments in proof of Lemma~\ref{le:characG}. This implies that $(t,\omega)\mapsto \AlamF(t,p_{t-}(\omega))$ and $(t,\omega)\mapsto \BlamF(t,p_{t-}(\omega))$ defined by~\eqref{eq:ABterm} with $g$ replaced by $h$ as well as the $\cP(\fF^\Psi)$-measurability of right-hand side of~\eqref{eq:hb}. 
   %	Consequently, the unique root $(t,\omega)\mapsto \blamF(t,p_{t-}(\omega))$ is $\cP(\fF^\Psi)$-measurable. In summary, $(t,\omega)\mapsto \bstar(t,p_{t-}(\omega))$ is $\cP(\fF^\Psi)$-measurable. That is, $\stprtTalt{\bstar}$ is an $\fF^\Psi$-predictable (in particular, $\fG$-predictable and $\fF^\Psi$-adapted), $[0,1]$-valued process and thus $(\xistar,\bstar)\in\cUtilde[t,T]$. 
   So we can deduce that %$\Vtilde\equiv V$ and \marginnote{since $\ccH h(s,\phat_{s},\xistar_s,\bstar_s)$ is zero at those points $\stT$ where the $h_t(s,\phat_s)$ does not exist}
   \begin{equation*}
   \mathcal{H} h(s,p_{s},q_s;\xi^{\star}(s),b^{\star}(s)) = 0,\quad \stT.
   \end{equation*}
   This implies 
   \begin{equation*}
   \int_t^T f(s,\Xxibstar_s)\,\mathcal{H} h(s,p_s,q_s;\xi^{\star}(s),b^{\star}(s))ds = 0.
   \end{equation*}
   Consequently,
   \begin{equation*}
   U(\Xxibstar_T)= G(T,\Xxibstar_T,p_T,q_T) = G(t,X^{\xi^\star,b^\star}_t,p_t,q_t) + \eta^{\xi^\star,b^\star}_T - \eta^{\xi^\star,b^\star}_t.
   \end{equation*}
   Again, taking the regular conditional expectation given $\Xxibstar_t=x$ and $p_t=p$  on both sides then yields
   \begin{equation*}
   \Eop^{t,x,p}\big[U(\Xxibstar_T)\big] = G(t,x,p,q)= -e^{-\alpha x e^{r(T-t)}}h(t,p,q)
   \end{equation*}
   %	That is,
   %	\begin{equation*}
   %	\Vxibstar(t,x,p) = G(t,x,p) = -e^{-\alpha x e^{r(T-t)}}h(t,p),
   %	\end{equation*}
   %	which implies
   %	\begin{equation*}
   %	V(t,x,p) = \sup_{(\xi,b)\in\UtildetT}V^{\xi,b}(t,x,p) = V^{\xi^{\star},b^{\star}}(t,x,p) =  -e^{-\alpha x e^{r(T-t)}}h(t,p).
   %	\end{equation*} 
   %	From~\eqref{eqpr:HJB} follows that the optimal investment and reinsurance strategy are independent since $f_1$ depends not on the filter $\stpr{p}$. Moreover, we have seen in Section~\ref{sec:optinv} that
   %	\begin{equation*}
   %	|\xistar(t)|=\bigg|\arginf_{\xi\in\R} f_1(t,\xi) \bigg|= \bigg|\frac{\mu-r}{\sigma^2}\frac{1}{\alpha}e^{-r(T-t)}\bigg| \le \frac{|\mu-r|}{\sigma^2}\frac{1}{\alpha},
   %	\end{equation*}
   %	\ie the optimal investment strategy is continuous, bounded and deterministic (in particular $\fF^W$-adapted). The definition of $f_2$ implies that the optimal reinsurance strategy is $\fF^\Psi$-predictable, compare the first order condition~\eqref{eqDir:ftwofoc}.
   %	Therefore,
   %	\begin{equation*}
   %	\sup_{\xib\in\UtildetT}V^{\xi,b}(t,x,p) = \sup_{\xib\in\UtT}V^{\xi,b}(t,x,p)
   %	\end{equation*}
   %	and thus
   %	\begin{equation*}
   %	V(t,x,p) =  -e^{-\alpha x e^{r(T-t)}}h(t,p)
   %	\end{equation*}
   and the proof is complete.
\end{proof} 

\begin{proof}[Proof of Lemma \ref{leDir:propg}]
	\begin{enumerate}
		\item By the definition of $g$ we immediately obtain $g\ge 0$. In order to show that $g$ is bounded from above we consider the strategy $(\xi,b)\equiv (0,0)$. What remains is
		$$g^{0,0}(t,p,q)=  \Eop^{t,p,q}\bigg[\exp\bigg\{-\int_t^T \alpha\, e^{r(T-s)}(\eta-\theta)\kappa ds\bigg\}\bigg]<\infty.$$ To show that $g>0$ we use the change of measure introduced in Lemma~\ref{le:fbounded}. With its help it is possible to prove that $g^{\xi,b}(t,p,q)$ is bounded from below by a positive constant independent of $\xi$ and $b$.
		%	\item This statement follows immediately from the definition of $g^{\xi,b}$ given in~\eqref{eqDir:gxib}.
		\item Follows by conditioning. 
		\item  Let us fix $t\in[t,T]$ and $q=(q_1,\ldots,q_m)\in\Delta_m$ and $\beta\in(0,1)$. Suppose $p,p' \in\Delta_m$. We obtain
		\begin{align*}
		&g(t,\beta p + (1-\beta)p',q) \\
		&\quad= \inf_{\xib\in\UtT}  \sum_{j=1}^m (\beta p_j + (1-\beta)p'_j) g^{\xi,b}(t,x,e_j) \\
		&\quad= \inf_{\xib\in\UtT}\left(  \beta\sum_{j=1}^m p_j g^{\xi,b}(t,x,e_j) + (1-\beta) \sum_{j=1}^m p'_j g^{\xi,b}(t,x,e_j)\right) \\
		&\quad\ge \beta \inf_{\xib\in\UtT}  \sum_{j=1}^m p_j g^{\xi,b}(t,x,e_j) + (1-\beta) \inf_{\xib\in\UtT}  \sum_{j=1}^m q_j g^{\xi,b}(t,x,e_j)  \\
		&\quad=\beta g(t,p,q) + (1-\beta)g(t,p',q),
		\end{align*}
		for all $t\in[0,T]$ and $x\in\R$.
		\item The Lipschitz condition is proven in much the same way as in \cite[Lemma 6.1 d)]{BaeuerleRieder2007}.
		\item The Lipschitz condition is proven in much the same way as in \cite[Lemma 6.1 e)]{BaeuerleRieder2007}. \qedhere
%		\item	Fix $(t,p,q)\in M$. Using the Lipschitz properties  of $g$   we obtain
%		\begin{align*}
%		&\big|D\,g(t,p,q)\big| \\
%		&\le \bigg|\lim_{h\downarrow 0} \frac{1}{h}\bigg(g(t+h,p,q)-g(t,p,q)\bigg)\bigg| \\
%		&\quad+\sum_{j=1}^m \bigg|\lim_{h\downarrow 0}\frac{1}{h}\bigg(g(t,p_1,\ldots,p_{j-1},p_j+h,p_{j+1},\ldots,p_m,q)-g(t,p,q)\bigg)p_j\bigg|\times\\
%		&\qquad\bigg|\sum_{k=1}^m p_k\,\lambda_k-\lambda_j\bigg| \\
%		&\le \lim_{h\downarrow 0}\frac{1}{h}L_1\,h + m\,\Big|\lim_{h\downarrow 0}\frac{1}{h}L_2\,h\Big|\,(m+1)\lambda_m 
%		\le L_1 + m\,L_2\,(m+1)\lambda_m 
%		\end{align*}
%		for some constants $0<L_1<\infty$ and $0<L_2<\infty$.
%		\item Fix $(t,p,q)\in\domaingDir$ and $\xib\in[-K,K]\times[0,1]$. It follows from the definition of $\mathcal{L}$ given in~\eqref{eqDir:L} statement~(iii) that
%		\begin{align*}
%		\big|\mathcal{L} g(t,p,q;\xi,b)\big| &\le K_3\bigg(m\lambda_m + \alpha\,e^{rT}\Big(|\mu-r|K+(2+\eta+\theta)\,\kappa + \frac12\alpha\,\sigma^2e^{rT}K^2\Big) \\
%		&\quad+ m\lambda_m\,M_F\big(\alpha\,e^{rT}\big)\bigg) =: K_5 
%		\end{align*}
%		\item  Fix $(t,p,q)\in\domaingDir$. In the same way as in the proof the previous statement, the following results arise by taking account of~\eqref{eq:fxistar},~\eqref{eqDir:gHJBa} and~\eqref{eqDir:ftwoa}:
%		\begin{align*}
%		&\big| \inf_{(\xi,b)\in[-K,K]\times[0,1]}\mathcal{L} g(t,p,q;\xi,b)\big| \\
%		&\le K_3\bigg(m\lambda_m+\alpha\,e^{rT}\frac12\frac{(\mu-r)^2}{\sigma^2}\frac{1}{\alpha}+\alpha\,e^{rT}(2+\eta+\theta)\,\kappa+m\lambda_m\,M_F(\alpha\,e^{rT})\bigg)=: K_6.
%		\end{align*}
	\end{enumerate}
\end{proof}

\begin{proof}[Proof of Theorem~\ref{thDir:existanceHJB}]
   Fix $t\in[0,T)$ and $\xib\in U[t,T]$ and set $f(t,x) := -e^{-\alpha x e^{r(T-t)}}$, $x\in \R.$
   %	\begin{equation}\label{eq:f}
   %	f(t,x) \defeq -e^{-\alpha x e^{r(T-t)}},\quad x\in \R.
   %	\end{equation}
   Let $\tau$ be the first jump time of $\Xxib$ after $t$
   %, $\tau'$ the last jump of $\Xxib$ before $\tau$ 
   and $t'\in(t,T]$. It follows from Lemma~\ref{leDir:characG} and Lemma \ref{leDir:propg} that 
   \begin{equation}\label{eqprDir:Vtilde}
   \begin{aligned}
   &V(\tau\wedge t',\Xxib_{\tau\wedge t'},p_{\tau\wedge t'},q_{\tau\wedge t'}) \\
   &= V(t,\Xxib_t,p_t,q_t) + \int_t^{\tau\wedge t'} f(s,\Xxib_s)\,\mathcal{H}g(s,p_s,q_s;\xi_s,b_s)ds + \eta^{\xi,b}_{\tau\wedge t'} - \eta^{\xi,b}_t,
   \end{aligned}
   \end{equation}
   where $\stprT{\eta^{\xi,b}}$ is a $\fG$-martingale and we set $\mathcal{H}g(s,p_s,q_s;\xi_s,b_s)$ to zero at those $s\in[t,T]$ where the $D g(s,p_s,q_s)$ does not exist. 
   For any $\varepsilon>0$ we can construct  a strategy $(\xi^\varepsilon,b^\varepsilon)\in U[t,T]$ with $(\xi^\varepsilon_s,b^\varepsilon_s)=(\xi_s,b_s)$ for all $s\in[t,\tau\wedge t']$  from the continuity of $V$ such that
   \begin{align*}
   \Eop^{t,x,p,q}\Big[V(\tau\wedge t',\Xxib_{\tau\wedge t'},p_{\tau\wedge t'},q_{\tau\wedge t'})\Big]
   &\le \Eop^{t,x,p,q}\Big[\Eop^{\tau\wedge t',\Xxib_{\tau\wedge t'},p_{\tau\wedge t'},q_{\tau\wedge t'}}\Big[U(X_T^{\xi^\varepsilon,b^\varepsilon})\Big]\Big] +\varepsilon \\
   &\le \Eop^{t,x,p,q}\Big[U(X_T^{\xi^\varepsilon,b^\varepsilon})\Big]+\varepsilon \le V(t,x,p,q)+\varepsilon.
   \end{align*}
   From the arbitrariness of $\varepsilon>0$ we conclude
   $$V(t,x,p,q) \ge \Eop^{t,x,p,q}\Big[V(\tau\wedge t',\Xxib_{\tau\wedge t'},p_{\tau\wedge t'},q_{\tau\wedge t'})\Big]. $$
   Using this statement and \eqref{eqprDir:Vtilde} we obtain 
   \begin{align*}
   0 &\ge \lim_{t'\downarrow t}\Eop^{t,x,p,q}\bigg[\frac{1}{t'-t}\int_t^{t'} f(s,\Xxib_s)\,\mathcal{H}g(s,p_s,q_s;\xi_s,b_s) ds \big| t'<\tau\bigg]\Pop^{t,x,p,q}(t'<\tau) \\
   &\quad + \lim_{t'\downarrow t}\Eop^{t,x,p,q}\bigg[\frac{1}{t'-t}\int_t^{\tau} f(s,\Xxib_s)\,\mathcal{H}g(s,p_s,q_s;\xi_s,b_s)ds\big| t'\ge\tau\bigg]\Pop^{t,x,p,q}(t'\ge\tau).
   \end{align*}
   We have 
   \begin{equation*}
   \Pop^{t,x,p,q}(\tau\le t') = \int \P_\lambda(\tau\le t')\,\Pi_\Lambda(\dif\lambda) = \sum_{j=1}^m\Big(1-e^{-\lambda(t'-t)}\Big)\pi_\Lambda(j).
   \end{equation*}
   Thus 
   \begin{equation*}
   \lim_{t'\downarrow t}\Pop^{t,x,p,q}(\tau\le t')=\sum_{j=1}^m\Big(1-\lim_{t'\downarrow t}e^{-\lambda(t'-t)}\Big)\pi_\Lambda(j)=0.
   \end{equation*}
   Consequently,
   \begin{equation*}
   0\ge \lim_{t'\downarrow t}\Eop^{t,x,p,q}\bigg[\frac{1}{t'-t}\int_t^{t'} f(s,\Xxib_s)\,\mathcal{H}g(s,p_s,q_s;\xi_s,b_s)ds\Ind{t'<\tau}\bigg].
   \end{equation*}
   %	Note that, due to Lemma~\ref{leDir:propg}~(vii) and~(viii) 
   %	\begin{align*}
   %	&\Eop^{t,x,p,q}\bigg[\frac{1}{t'-t}\int_t^{t'} f(s,\Xxib_s)\,\mathcal{H}g(s,p_s,q_s;\xi_s,b_s)ds\Ind{t'<\tau}\bigg] \\
   %	&\le \Eop^{t,x,p,q}\bigg[\frac{1}{t'-t}\int_t^{t'} |f(s,\Xxib_s)|(K_5+K_4)ds\bigg] <\infty\\
   %	&\le \frac{K_4+K_5}{t'-t}\int_t^{t'}\Eop^{t,x,p,q}_{\mathbb{Q}^{\xi,b}}\bigg[\frac{|f(s,\Xxib_s)|}{L^{\xi,b}_s}\bigg]ds \\
   %	&\le \frac{K_4+K_5}{t'-t}K_1(t'-t) = (K_4+K_5)\,K_1 <\infty
   %	\end{align*}
   %	where $\frac{d \mathbb{Q}^{\xi,b}}{d \Pop^{t,x,p,q} }=L^{\xi,b}$ and the last but one inequality is given in Lemma \eqref{le:fbounded}.
   By the dominated convergence theorem, we can interchange the limit and the expectation and %. That is,
   %	\begin{equation*}
   %	0\ge \Eop^{t,x,p,q}\bigg[\lim_{t'\downarrow t}\frac{1}{t'-t}\int_t^{t'} f(s,\Xxib_s)\,\mathcal{H}g(s,p_s,q_s;\xi_s,b_s)ds\Ind{t'<\tau}\bigg].
   %	\end{equation*}
   we obtain by the fundamental theorem of Lebesgue calculus and $\Ind{t'<\tau}\to1$ $\Pas$ for $t'\downarrow t$, 
   \begin{equation*}
   0\ge \Eop^{t,x,p,q}\bigg[f(t,\Xxib_t)\,\mathcal{H}g(t,p_t,q_t;\xi_t,b_t)\bigg].
   \end{equation*}
   From now on, let $\xib\in[-K,K]\times[0,1]$ and $\varepsilon>0$ as well as $(\bar{\xi},\bar{b})\in U[t,T]$ be a fixed strategy with $(\bar{\xi}_s,\bar{b}_s)\equiv(\xi,b)$ for $s\in[t,t+\varepsilon)$. Then
   \begin{equation*}
   0\ge \Eop^{t,x,p,q}\bigg[f(t,X^{\bar{\xi},\bar{b}}_t)\,\mathcal{H}g(t,p_t,q_t;\bar{\xi}_t,\bar{b}_t)\bigg] = f(t,x)\mathcal{H}g(t,p,q;\xi,b)
   \end{equation*}
   at those points $(t,p,q)$ where $D g(t,p,q)$ exists. Due to the negativity of $f$, we get
   \begin{equation*}
   0\le \mathcal{H}g(t,p,q;\xi,b).
   \end{equation*}
   We show next the inequality above if $D g$ does not exist. For this purpose, we denote by $M_q\subset[0,T]\times\Delta_m$ the set of points at which $\nabla g_q(t,p)$ exists for any $q\in\N_0^\ell$. On the basis of Theorem~\ref{th:genCgco}, we have, for any $q\in\N_0^\ell$, 
   \begin{equation*}
   \partial^C g_q(t,p) = co\Big\{\lim_{n\to\infty} \nabla g_q(t_n,p_n): (t_n,p_n)\to(t,p), (t_n,p_n)\in M_q\Big\}.
   \end{equation*}
   That is, for every $\varphi\in\partial^C g_q(t,p)\subset[0,T]\times\Delta_m$, there exists $u\in\N$ and $(\beta_1,\ldots,\beta_u)\in\Delta_u$ such that $\varphi = \sum_{i=1}^u \beta_i\,\varphi^i$, where $\varphi^i = \lim_{n\to\infty} \nabla g_q(t_n^i,p_n^i)$ for sequences $(t_n^i,p_n^i)_{n\in\N}$ with $\lim_{n\to\infty}(t_n^i,p_n^i)=(t,p)$ along   existing $\nabla g_q$. From what has already been proved, it can be concluded that, for any $i=1,\ldots,u$
   \begin{equation*}
   0 \le \mathcal{L}g(t_n^i,p_n^i,q;\xi,b)+g_t(t_n^i,q_n^i,q) + \sum_{j=1}^m g_{p_j}(t_n^i,p_n^i,q)(p_n^i)_j\bigg(\sum_{k=1}^m\lambda_k(p_n^i)_k -\lambda_j\bigg),
   \end{equation*}
   where $(p_n^i)_j$ denotes the $j$th component of the $m$-dimensional vector $p_n^i$. Thus, by the continuity of $t\mapsto g(t,p,q)$, $p\mapsto g(t,p,q)$ and $p\mapsto J(p)$, we get for $i=1,\ldots, u$
   \begin{align*}
   0 &\le \beta_i\mathcal{L}g(t,p,q;\xi,b)+\beta_i\lim_{n\to\infty}g_t(t_n^i,q_n^i,q) %\\	&\quad
   + \sum_{j=1}^m \beta_i\lim_{n\to\infty}g_{p_j}(t_n^i,p_n^i,q)p_j\bigg(\sum_{k=1}^m\lambda_k\,p_k -\lambda_j\bigg), 
   \end{align*}
   which yields
   \begin{align*}
   0 &\le \mathcal{L}g(t,p,q;\xi,b)+\sum_{i=1}^u\beta_i\lim_{n\to\infty}g_t(t_n^i,q_n^i,q) %\\	&\quad
   + \sum_{j=1}^m \sum_{i=1}^u\beta_i\lim_{n\to\infty}g_{p_j}(t_n^i,p_n^i,q)p_j\bigg(\sum_{k=1}^m\lambda_k\,p_k -\lambda_j\bigg) \\
   &= \mathcal{L}g(t,p,q,\xi,b)+ \varphi_0 + \sum_{j=1}^m \varphi_j\,p_j\bigg(\sum_{k=1}^m\lambda_k\,p_k -\lambda_j\bigg).
   \end{align*}
   
   Due to the arbitrariness of $\varphi\in\partial^C g_q(t,p)$ and $\xib\in[-K,K]\times[0,1]$, we obtain 
   \begin{equation*}
   0 \le \inf_{(\xi,b)\in[-K,K]\times[0,1]}\mathcal{L}g(t,p,q,\xi,b)+ \inf_{\varphi\in\partial^C g_q(t,p)}\bigg\{\varphi_0 + \sum_{j=1}^m\varphi_j\,p_j\bigg(\sum_{k=1}^m \lambda_k\,p_k-\lambda_j\bigg)\bigg\}.
   \end{equation*}
   Our next objective is to establish the reverse inequality. For any $\varepsilon>0$ and $0\le t<t'\le T$, there exists a strategy $({\xi^{\varepsilon,t^\prime},b^{\varepsilon,t^\prime}})\in U[t,T]$ such that
   \begin{equation*}
   V(t,x,p,q)-\varepsilon(t'-t)\le \Eop^{t,x,p,q}\Big[U\big(X_T^{\xi^{\varepsilon,t^\prime},b^{\varepsilon,t^\prime}}\big)\Big] 
   \le \Eop^{t,x,p,q}\Big[V\big(\tau\wedge t',X^{\xi^{\varepsilon,t^\prime},b^{\varepsilon,t^\prime}}_{\tau\wedge t'},p_{\tau\wedge t'},q_{\tau\wedge t'}\big)\Big].
   \end{equation*}
   Using Lemma \ref{leDir:characG} it follows
   \begin{equation*}
   -\varepsilon(t'-t)\le \Eop^{t,x,p,q}\bigg[\int_t^{\tau\wedge t'} f\big(s,X^{\xi^{\varepsilon,t^\prime},b^{\varepsilon,t^\prime}}_s\big)\,\mathcal{H}g\big(s,p_s,q_s;\xi^{\varepsilon,t^\prime}_s,b^{\varepsilon,t^\prime}_s\big)ds\bigg].
   \end{equation*}
   In the same way as before, we get 
   \begin{align*} 
   -\varepsilon&\le \lim_{t'\downarrow t}\Eop^{t,x,p,q}\bigg[\frac{1}{t'-t}\int_t^{t'} f\big(s,X^{\xi^{\varepsilon,t^\prime},b^{\varepsilon,t^\prime}}_s\big)\,\mathcal{H}g\big(s,p_s,q_s;\xi^{\varepsilon,t^\prime}_s,b^{\varepsilon,t^\prime}_s\big)ds\Ind{t'<\tau}\bigg] \\
   &\le\lim_{t'\downarrow t}\Eop^{t,x,p,q}\bigg[\frac{1}{t'-t}\int_t^{t'}\! f\big(s,X^{\xi^{\varepsilon,t^\prime},b^{\varepsilon,t^\prime}}_s\big)\!\inf_{(\xi,b)\in[-K,K]\times[0,1]}\!\mathcal{H}g\big(s,p_s,q_s;\xi,b\big)ds\Ind{t'<\tau}\bigg].
   \end{align*}
   %	Making use of Lemma~\ref{leDir:gprop}~(vii) and~(ix), we obtain as before
   %	\begin{align*}
   %	&\Eop^{t,x,p,q}\bigg[\frac{1}{t'-t}\int_t^{t'} f\big(s,X^{\xi^{\varepsilon,t^\prime},b^{\varepsilon,t^\prime}}_s\big)\,\inf_{(\xi,b)\in[-K,K]\times[0,1]}\mathcal{H}g\big(s,p_s,q_s;\xi,b\big)ds\Ind{t'<\tau}\bigg]\\
   %	&\le \Eop^{t,x,p,q}\bigg[\frac{1}{t'-t}\int_t^{t'} \big|f\big(s,X^{\xi^{\varepsilon,t^\prime},b^{\varepsilon,t^\prime}}_s\big)\big|(K_6+K_4)ds\bigg] 	\le (K_4+K_6)\,K_1 <\infty.
   %	\end{align*}
   We can again interchange the limit and the infimum by the dominated convergence theorem which yields 
   \begin{equation*}
   -\varepsilon\le \Eop^{t,x,p,q}\bigg[\lim_{t'\downarrow t}\frac{1}{t'-t}\int_t^{t'} f\big(s,X^{\xi^{\varepsilon,t^\prime},b^{\varepsilon,t^\prime}}_s\big)\,\inf_{(\xi,b)\in[-K,K]\times[0,1]}\mathcal{H}g\big(s,p_s,q_s;\xi,b\big)ds\Ind{t'<\tau}\bigg].
   \end{equation*}
   Thus the same conclusion can be draw as above, i.e. 
   \begin{equation*}
   -\varepsilon \le f(t,x)\inf_{(\xi,b)\in[-K,K]\times[0,1]}\mathcal{H}g(t,p,q;\xi,b)
   \end{equation*}
   at those point where $Dg(s,p,q)$ exists. According to the negativity of $f$ and the arbitrariness of $\varepsilon>0$, we get, by $\varepsilon\downarrow0$,
   \begin{equation*}
   0 \ge \inf_{(\xi,b)\in[-K,K]\times[0,1]}\mathcal{H}g(t,p,q;\xi,b)
   \end{equation*}
   at those point where $Dg(s,p,q)$ exists.
   By the same way as before, we obtain in the case of no differentiability of $g$ w.r.t.\ $t$ and $p_j$, $j=1,\ldots,m$, that 
   \begin{equation*}
   0 \ge \inf_{(\xi,b)\in[-K,K]\times[0,1]}\mathcal{L}g(t,p,q;\xi,b) + \inf_{\varphi\in\partial^C g_q(t,p)}\bigg\{\varphi_0+\sum_{j=1}^m \varphi_j\,p_j\bigg(\sum_{k=1}^m\lambda_k\,p_k -\lambda_j\bigg)\bigg\}.
   \end{equation*}
   Summarizing, we have equality in the previous expression.
   %	\begin{equation*}
   %	0 = \inf_{(\xi,b)\in[-K,K]\times[0,1]}\mathcal{L}g(t,p,q;\xi,b) + \inf_{\varphi\in\partial^C g_q(t,p)}\bigg\{\varphi_0+\sum_{j=1}^m \varphi_j\,p_j\bigg(\sum_{k=1}^m\lambda_k\,p_k -\lambda_j\bigg)\bigg\}.
   %	\end{equation*}
   %	With the same arguments as in the proof of Theorem~\ref{th:veriHJB}, we get
   %	\begin{equation*}
   %	\inf_{(\xi,b)\in\R\times[0,1]}\mathcal{L}g(t,p,q;\xi,b) = \inf_{(\xi,b)\in[-K,K]\times[0,1]}\mathcal{L}g(t,p,q;\xi,b)
   %	\end{equation*}
   %	and that
   %	\begin{equation*}
   %	g(t,p,q) = \inf_{(\xi,b)\in U[t,T]} g^{\xi,b}(t,p,q) = \inf_{(\xi,b)\in\UtT} g^{\xi,b}(t,p,q) = g(t,p,q).
   %	\end{equation*}
   %	Therefore, it follows from Lemma~\ref{leDir:gtildeprop}~(i),~(iii),~(iv),~(v),~(vi) and Theorem~\ref{thDir:veri} that
   %	\begin{equation*}
   %	V(t,x,p,q) = -e^{-\alpha x e^{r(T-t)}}g(t,p,q),\quad (t,x,p,q)\in\domainVDir,
   %	\end{equation*}
   The optimality of $(\xi^{\star},b^{\star})$  follows as in the proof of Theorem \ref{thDir:veri}.
\end{proof}

\begin{proof}[Proof of Lemma \ref{leDir:propg2}]
	\begin{enumerate}
		\item Follows  by conditioning.
	\item We observe that 
   \begin{equation*}
   g^{\xi,b}(t,p,q) = \sum_{k=1}^m \Pop^{t,p,q}(\Lambda=\lambda_k)\int h(\lambda_k,\tilde\alpha)\Pop^{t,p,q}(\bar\alpha\in d\tilde\alpha)
   \end{equation*}
   with
\begin{align*}
&h(\lambda,\tilde{\alpha}) \defeq \Eop\bigg[\exp\bigg\{-\int_t^T \alpha\, e^{r(T-s)}\big((\mu-r)\,\xi_s+c(b_s)\big)ds\\
&\quad -\int_t^T\alpha\,\sigma\, e^{r(T-s)}\xi_s dW_s  + \alpha\sum_{n=1}^{N_{T-t}} b_{T_n}\,e^{r(T-T_n)}\idsum Y_n\ind{Z_n}(i)\bigg\}\Bigg|\Lambda=\lambda,\alphabar=\tilde{\alpha}\bigg],
\end{align*}
and $\Pop^{t,p,q}(\Lambda=\lambda_k)=p_k$ and $\Pop^{t,p,q}(\alphabar\in\dif\tilde{\alpha})=f_{\bar{\beta}}(\tilde{\alpha}| q)\dif\tilde{\alpha}$, where
$f_{\bar{\beta}}(\tilde{\alpha}| q)$ denotes the posterior density function of $\tilde{\alpha}$ given $q_t=q$, compare Theorem~\ref{th:Dirconjugated}.
That is,
\begin{equation*}
f_{\bar{\beta}}(\tilde{\alpha}| q) =  \frac{\Gamma\big(\Esum(\beta_E+q_E)\big)}{\Eprod\Gamma\big(\beta_E+q_E\big)} \Eprod \alpha_E^{\beta_E+q_E-1},\quad\tilde{\alpha}=(\alpha_E)_{E\subset\mathbb{D}}\in\mathring{\Delta}_\ell.
\end{equation*}
Consequently, the statement holds if
\begin{equation*}
\Dsum\fracbetaq f_{\bar{\beta}}(\tilde{\alpha}| v(q,D)) = f_{\bar{\beta}}(\tilde{\alpha}| q).
\end{equation*}
Indeed, using $\Gamma(n+1)=n\,\Gamma(n)$ for all $n\in\N$, we have for any $\tilde{\alpha}=(\alpha_E)_{E\subset\mathbb{D}}\in\mathring{\Delta}_\ell$
\begin{align*}
&\Dsum\fracbetaq f_{\bar{\beta}}(\tilde{\alpha}| v(q,D)) \\
=&\Dsum\fracbetaq \frac{\Gamma\big(\Esum(\beta_E+q_E)+1\big)}{\Gamma\big(\beta_D+q_D+1\big)\EDDprod\Gamma\big(\beta_E+q_E\big)}\alpha_D^{\beta_D+q_D}\! \EDDprod \alpha_E^{\beta_E+q_E-1} \\
=&\Dsum\frac{\beta_D+q_D}{\Esum (\beta_E+q_E)}\frac{\big(\Esum(\beta_E+q_E)\big)\Gamma\big(\Esum(\beta_E+q_E)\big)}{\big(\beta_D+q_D\big)\Eprod\Gamma\big(\beta_E+q_E\big)} \alpha_D\Eprod \alpha_E^{\beta_E+q_E-1}\\
=&s\frac{\Gamma\big(\Esum(\beta_E+q_E)\big)}{\Eprod\Gamma\big(\beta_E+q_E\big)} \Eprod \alpha_E^{\beta_E+q_E-1}\Dsum\alpha_D =f_{\bar{\beta}}(\tilde{\alpha}| q),
\end{align*}
since $\Dsum\alpha_D=1$.	\qedhere
%\item We can conclude the assertion by using~\eqref{eqDir:separation} and the convexitiy of $V$ w.r.t.\ $p$, compare Lemma~\ref{leDir:propV}~(b). 
	\end{enumerate}
\end{proof}

% We observe that 
%\begin{equation*}
%g^{\xi,b}(t,p,q) = \sum_{k=1}^m\Pop^{t,p,q}(\Lambda=\lambda_k)\int_{\Delta_\ell} h(\lambda_k,\tilde{\alpha})\,\Pop^{t,p,q}(\alphabar\in\dif\tilde{\alpha})
%\end{equation*}

%\newpage

%\section{The Problem with Discrete Marginals}\label{sec:model}

%\bibliographystyle{apalike} % Bibliography Harvard style

%\bibliography{bibliography}

%{\em Acknowledgement:} The authors would like to thank  an anonymous  referee for helpful comments which led to some interesting remarks.

\end{document}